\title{}
\author{}
\date{}
\titleformat{\chapter}[display]
{\bfseries\huge}
{\filcenter\MakeUppercase{\chaptertitlename} \Huge\thechapter}
{1ex}
{\titlerule\vspace{1ex}\filcenter}
[\vspace{1ex}\titlerule]
\newtheorem{thm}{Theorem}
\newtheorem{prop}{Proposition}
\newtheorem{coroll}{Corollary}
\newtheorem{lemma}{Lemma}
\newcommand{\Z}{\mathbb{Z}}
\newcommand{\Co}{\mathbb{C}}
\newcommand{\R}{\mathbb{R}}
\newcommand{\N}{\mathbb{N}}
\newcommand{\Q}{\mathbb{Q}}
\newcommand{\Qal}{\overline{\Q}}
\DeclareMathOperator{\Pm}{P}
\theoremstyle{plain} 
\newcommand{\thistheoremname}{}
\newtheorem{genericthm}[thm]{\thistheoremname}
\newtheorem*{genericthm*}{\thistheoremname}
\newenvironment{namedthm*}[1]
{\renewcommand{\thistheoremname}{#1}%
	\begin{genericthm*}}
	{\end{genericthm*}}
\DeclareMathOperator{\ord}{ord}
\DeclareMathOperator{\ind}{ind}
\DeclareMathOperator{\paruno}{(}
\DeclareMathOperator{\pardue}{)}
\DeclareMathOperator{\modulus}{mod}
\DeclareMathOperator{\h}{ht}
\newcommand*{\math@version@bold}{bold}
\DeclareMathOperator\Sha{
	\textrm{%
		\usefont{T2A}{cmr}{\ifx\math@version\math@version@bold bx\else m\fi}{n}%
		\CYRSH
	}%
}
\newcommand\blfootnote[1]{%
	\begingroup
	\renewcommand\thefootnote{}\footnote{#1}%
	\addtocounter{footnote}{-1}%
	\endgroup
}
\begin{document}
	\begin{center}
		\textbf{TOWARDS A GENERALIZATION OF THE VAN DER WAERDEN'S CONJECTURE FOR $S_n$-POLYNOMIALS WITH INTEGRAL COEFFICIENTS OVER A FIXED NUMBER FIELD EXTENSION}
	\end{center}
\begin{center}
	Ilaria Viglino
\end{center}
\begin{center}
	\textbf{Abstract}
	
\end{center}
\begin{addmargin}[2em]{2em}

\fontsize{10pt}{12pt}\selectfont
The van der Waerden's Conjecture states that the set $\mathscr{P}_{n,N}^0(\Q)$ of monic integer polynomials $f(X)$ of degree $n$, with height $\le N$ such that the Galois group $G_{K_f/\Q}$ of the splitting field $K_f/\Q$ is the full symmetric group, has order $|\mathscr{P}_{n,N}^0(\Q)|=(2N)^n+O_n(N^{n-1})$ as $N\rightarrow+\infty$. The conjecture has been shown previously for cubic and quartics polynomials by van der Waerden, Chow and Dietmann. Subsequently, Bhargava proved it for $n\ge6$. In this paper, we generalize the result for polynomials with coefficients in the ring of algebraic integers $\mathcal{O}_K$ of a fixed finite extension $K/\Q$ of degree $d$, for some values of $n$ and $d$.
\end{addmargin}
\normalsize
\section{Introduction}
Fix a field extension $K/\Q$ of degree $d$. Let $n\ge2$ and let $N$ be positive integers. We consider monic polynomials with coefficients in $\mathcal{O}_K$:$$f(X)=X^n+\alpha_{n-1}X^{n-1}+\dots+\alpha_0.$$Choose an ordered integral basis $(\omega_1,\dots,\omega_d)$ of $\mathcal{O}_K$ over $\Z$.\blfootnote{The main results of this paper are contained in my Ph.D. thesis. One can use them to count the number of $S_n$-polynomials congruent to a given polynomial modulo a prime ideal.} We have, for all $k=0,\dots,n-1$,$$\alpha_k=\sum_{i=1}^{d}a_{i}^{(k)}\omega_i$$for unique $a_{i}^{(k)}\in\Z$. We view the coefficients $\alpha_0,\dots,\alpha_{n-1}$ as independent, identically distribuited random variables taking values uniformely in $[-N,N]^{d}$. Define the \textit{height} of $\alpha_k$ as $\mbox{ht}(\alpha_k)=\max_{i}|a_{i}^{(k)}|$ and the \textbf{height} of the polynomial $f$ to be$$\mbox{ht}(f)=\max_{k}\mbox{ht}(\alpha_{k}).$$For $n\ge2$, $N>0$ define$$\mathscr{P}_{n,N}^{0}(K)=\{f\in\mathcal{O}_K[X]:\mbox{ht}(f)\le N,\ G_{K_f/K}\cong S_n\},$$where $K_f$ is the splitting field of $f$ over $K$ inside a fixed algebraic closure $ \Qal $ of $ \Q $. We call these polynomials \textbf{$S_n$-polynomials over $K$}, or simply $S_n$-polynomials when there is no need to specify the base field.\\

It has been proven that almost all polynomials are $S_n$-polynomials in the following sense:$$\frac{|\mathscr{P}_{n,N}^{0}(K)|}{|\mathscr{P}_{n,N}(K)|}\underset{N\rightarrow+\infty}{\longrightarrow}1.$$For instance, in the case $K=\Q$, Van der Waerden gave in \cite{Wa} an explicit error term $O(N^{-1/6})$. It has improved in \cite{Gal} using large sieve to $ O(N^{-1/2}\log N) $, and more recently by Dietmann \cite{Di} using resolvent polynomials to $O(N^{-2+\sqrt{2}+\varepsilon})$ for every $\varepsilon>0$. The best estimate can be found in \cite{Bh1}, who proved the following result, conjectured by van der Waerden.
\begin{namedthm*}{Theorem}[Bhargava]
	If either $n=3,4$ or $n\ge6$, one has, $$|\mathscr{P}_{n,N}^{0}(\Q)|=(2N)^n+O(N^{n-1}),$$as $ N\rightarrow\infty $.
\end{namedthm*}

\subsection{Main results}
In his work, Bhargava used a combination of algebraic techniques and Fourier analysis over finite fields. In the below theorem, we generalize this result for polynomials in $\mathscr{P}_{n,N}^{0}(K)$, for certain values of $n$ and $d$ (Section 2.3). We also use large sieve over number fields to prove the upper bound for all $n\ge3$ and $d\ge1$ (Section 2.2).
\begin{thm}
	Let $d\ge1$ and $n\ge2$. There exist positive constants $\theta$ and $\theta_n$ such that the number of non $S_n$-polynomials is$$|\mathscr{P}_{n,N}(K)\setminus\mathscr{P}_{n,N}^{0}(K)|\ll_{n,K}N^{d(n-\theta)}(\log N)^{\theta_n},$$as $N\rightarrow+\infty$. In particular,\begin{enumerate}
		\item[$\paruno 1\pardue$] if $n=2$, we can choose $\theta=1$, $\theta_2=1$;
		\item[$\paruno 2\pardue$] for all $d\ge1$ and $n\ge3$ the above estimate holds with $\theta=1/2$ and $\theta_n=1-\gamma_n$, where $\gamma_n\sim(2\pi n)^{-1/2}$;
		\item[$\paruno 3\pardue$] if moreover $n\ge[3d+\sqrt{9d^2-4d}]+1$, we can take $\theta=1$ and $\theta_n=0$;
		\item[$\paruno 4\pardue$] if $d=1$ the result with $\theta=1$ and $\theta_n=0$ also holds for $n=3,4$.
	\end{enumerate}
\end{thm}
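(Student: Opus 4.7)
The strategy is to partition $\mathscr{P}_{n,N}(K)\setminus\mathscr{P}_{n,N}^{0}(K)$ into three classes according to how the Galois group fails to be $S_n$, and bound each separately: any non-$S_n$ polynomial $f$ is either (A) reducible over $K$; or (B) irreducible with square discriminant, so that $G_f\subseteq A_n$; or (C) irreducible with non-square discriminant, but $G_f$ is a proper transitive subgroup of $S_n$ not contained in $A_n$. The four items of the theorem then amount to bounds of different qualities on these three classes.

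Part (1) involves only class (A), since an irreducible quadratic has Galois group $S_2$. Writing a reducible $f=(X-\alpha)(X-\beta)$ with $\alpha,\beta\in\mathcal{O}_K$, the bounds $\mbox{ht}(\alpha+\beta)\le N$ and $\mbox{ht}(\alpha\beta)\le N$ translate, under the $d$ archimedean embeddings of $K$, into bounds on the symmetric functions of the conjugates. A dyadic decomposition over the maximum size $2^{j}\le N$ of $\alpha$ contributes $O(N^{d})$ admissible pairs per dyadic range, summing to $O(N^{d}\log N)$.

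For part (2) I would apply a number-field large sieve (Huxley--Kowalski--Zywina). The input is Chebotarev: if $G_f=S_n$, then $f\bmod\mathfrak{p}$ realises every cycle type of $S_n$ with the proportion dictated by class enumeration, as $\mathfrak{p}$ ranges over prime ideals of $\mathcal{O}_K$. A classical cycle-index calculation bounds from below, by some $\gamma_n\sim(2\pi n)^{-1/2}$, the proportion among elements of $S_n$ whose cycle type is avoided by every proper transitive subgroup. Feeding this sifting density into the large sieve over prime ideals of norm $\le N^{1/2}$ gives the estimate $|\mathscr{P}_{n,N}(K)\setminus\mathscr{P}_{n,N}^{0}(K)|\ll N^{d(n-1/2)}(\log N)^{1-\gamma_n}$, which is (2).

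Parts (3) and (4) require a finer treatment of each class in the spirit of Bhargava. Class (A) is estimated by parametrising $f=g\cdot h$ with $g,h\in\mathcal{O}_K[X]$ monic and applying Mahler-measure bounds at each archimedean place, giving $O(N^{d(n-1)})$ factorisations. Class (B) is reduced to counting $\mathcal{O}_K$-lattice points of height $\le N$ on the discriminant hypersurface $\{\mbox{disc}(f)=\square\}$, also $O(N^{d(n-1)})$, via a geometry-of-numbers projection onto the coefficients. Class (C) is the main obstacle: for each proper transitive $G\subsetneq S_n$ with $G\not\subseteq A_n$, the polynomials with $G_f\subseteq G$ are parametrised by invariants of $G$ acting on $K^n$, and the count is bounded by Fourier analysis on $\mathcal{O}_K/\mathfrak{p}$ for a carefully chosen prime $\mathfrak{p}$. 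The numerical hypothesis $n\ge[3d+\sqrt{9d^2-4d}]+1$ in (3) is exactly what guarantees that the resulting invariant parameter lattice has codimension $\ge d$ in the coefficient lattice, yielding enough cancellation in the exponential sums to beat the trivial estimate. The hardest step is adapting Bhargava's sums from $\Z/p\Z$ to $\mathcal{O}_K/\mathfrak{p}\cong\mathbb{F}_q$, tracking cancellation across the $d$ independent $\Z$-coordinates $a_i^{(k)}$ of each coefficient; it is the interplay between the index $[S_n:G]$, the residue-field size $q$, and the extension degree $d$ that forces the explicit threshold on $n$. Part (4) is the classical theorem of van der Waerden for $n=3$ and of Chow--Dietmann via resolvent cubics for $n=4$ over $K=\Q$, which requires no generalisation.
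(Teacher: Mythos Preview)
Your outline for part~(2) is essentially the paper's argument: a number-field large sieve (Huxley) fed with the sifting density coming from cycle types, and the constant $\gamma_n\sim(2\pi n)^{-1/2}$ arises exactly from the density $\delta(T)$ of permutations with a single transposition and no other even cycle. Part~(1) is also fine, and part~(4) is indeed just a citation to Chow--Dietmann.

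The real gap is in your treatment of part~(3). Your trichotomy (A)/(B)/(C) based on whether $\mathrm{disc}(f)$ is a square is not the decomposition that drives the argument, and your explanation of where the threshold $n\ge[3d+\sqrt{9d^2-4d}]+1$ comes from is incorrect. The paper splits the irreducible non-$S_n$ polynomials according to whether $G_f$ is \emph{imprimitive} or \emph{primitive}. In the imprimitive case $L_f=K[X]/(f)$ has a proper subfield over $K$, and one counts such $f$ by counting algebraic numbers of bounded height generating a tower; Widmer's theorem gives $O\bigl(N^{d(n/2+2d)}\bigr)$, which is $\le N^{d(n-1)}$ once $n\ge 4d+2$. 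In the primitive case one does \emph{not} go group by group via invariant theory: the only group-theoretic input is that a primitive $G\neq S_n$ contains no transposition, so $\mathfrak{D}_{L_f/K}$ is everywhere at least a square. Bhargava's method then partitions by the size of $|N_{K/\Q}\mathfrak{C}_f|$ and $|N_{K/\Q}\mathfrak{D}_f|$ into three ranges $\mathscr{N}_1,\mathscr{N}_2,\mathscr{N}_3$. The Fourier analysis you mention is used uniformly (Proposition~6) to count polynomials with prescribed local index, handling $\mathscr{N}_1$ and $\mathscr{N}_3$; it is not tied to a specific $G$ or to any ``invariant parameter lattice''. The set $\mathscr{N}_2$, where the discriminant is small, is bounded by counting degree-$n$ extensions of $K$ of bounded discriminant (Schmidt's $O(X^{(n+2)/4})$) and then counting polynomials cutting out a fixed field (Lemke Oliver--Thorne). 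It is the resulting exponent $d\bigl(1+\tfrac{(2+2\delta)(n+2)}{4}\bigr)-\tfrac{2}{n}$ that must be pushed below $d(n-1)$, and solving this inequality, together with the imprimitive constraint, is what produces the explicit threshold on $n$ in terms of $d$. Your ``codimension~$\ge d$'' heuristic does not appear and would not yield this particular quadratic in $d$.
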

Let $G$ be a subgroup of $S_n$; define$$\mathscr{N}_n(N,G;K)=\mathscr{N}_n(N,G)=\{f\in\mathscr{P}_{n,N}(K):G_f\cong G\},$$and $N_n(N,G;K)=N_n(N,G)=|\mathscr{N}_n(G,N)|$. Theorem 1 states that\begin{equation}
	N_n(N,G)\ll_{n,K} N^{d(n-\theta)}(\log N)^{\theta_n}
\end{equation}as $N\rightarrow+\infty$ for all $G\subset S_n$.

Part (4) of Theorem 1 is the van der Waerden's conjecture for cubic and quartic fields, which has already been proven by Chow and Dietmann in \cite{CD}. 
Recently Bhargava \cite{Bh1} proved the conjecture for all $n\ge 6$ and $K=\Q$. Part (3) of Theorem 1 is a generalization of this result for polynomials with integral coefficients in a number field $K$, for some values of $d$ and $n$. Finally, for part (2) we apply large sieve to the set $\mathscr{P}_{n,N}$ (see \cite{Gal} for the analogous result for $d=1$).\\

In Chapter 3, we are going to count the number of $f\in\mathscr{P}_{n,N}$, whose Galois group is a fixed transitive subgroup of $S_n$. In order to prove the following theorem, we use resolvents of polynomials, the discriminant variety and generalize some results of \cite{Di}, \cite{Di2}, \cite{HB} and \cite{BHB}.

\begin{thm}
	For every $\varepsilon>0$ and positive integer $n$,\begin{multline*}
		|\{(\alpha_0,\dots,\alpha_{n-1})\in\mathcal{O}_K^n:\h(\alpha_j)\le N\ \forall j,\\
		f(X)=X^n+\alpha_{n-1}X^{n-1}+\dots+\alpha_0\mbox{ has }G_{K_f/K}=G\}|\\ \ll_{n,d,\varepsilon}N^{d(n-1+1/[S_n:G])+\varepsilon},
	\end{multline*}where $[S_n:G]$ is the index of $G$ in $S_n$.
\end{thm}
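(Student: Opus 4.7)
Write $m=[S_n:G]$. The plan is to associate to each $f$ a resolvent polynomial whose splitting behaviour detects whether $G_{K_f/K}\subseteq G$, and then count those $f$ for which the resolvent has a rational root via the determinant method of \cite{HB}, \cite{BHB}, adapted to curves over $\mathcal{O}_K$.

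I first treat the case in which $G$ is transitive; the intransitive case reduces to this one by splitting along the $G$-orbit decomposition of $\{1,\ldots,n\}$ and counting each irreducible factor of $f$ separately. Choose an $S_n$-relative invariant $\phi(X_1,\ldots,X_n)\in\Z[X_1,\ldots,X_n]$ whose stabilizer in $S_n$ is exactly $G$, and form
\[
R_f(Y)\;=\;\prod_{\sigma\in S_n/G}\bigl(Y-\phi^{\sigma}(\theta_1,\ldots,\theta_n)\bigr)\;\in\;\mathcal{O}_K[Y],
\]
where $\theta_1,\ldots,\theta_n$ are the roots of $f$. Then $R_f$ is monic of degree $m$, and by symmetric function theory its coefficients are polynomial expressions in the $\alpha_j$ that are quasi-homogeneous with respect to the weighting $\deg(\alpha_j)=n-j$. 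By the classical theory of Galois resolvents, $G_{K_f/K}\subseteq\tau G\tau^{-1}$ for some $\tau\in S_n$ if and only if $R_f$ has a root in $\mathcal{O}_K$, so it suffices to bound the size of $\mathcal{R}:=\{f\in\mathscr{P}_{n,N}(K):R_f(y)=0\text{ for some }y\in\mathcal{O}_K\}$.

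Next, I would convert this into a point-count on an auxiliary curve. For each tuple $(\alpha_1,\ldots,\alpha_{n-1})\in\mathcal{O}_K^{n-1}$ of height $\le N$ --- of which there are $\ll N^{d(n-1)}$ --- the remaining pair $(\alpha_0,y)$ must lie on the affine plane curve $C\subset\mathbb{A}^{2}$ cut out by $R_f(y)=0$ viewed as a polynomial in $(\alpha_0,y)$. The $y$-degree of $C$ is exactly $m$. Applying the Heath-Brown determinant method for integer points on plane curves, in the $\mathcal{O}_K$-version obtained by generalising \cite{HB} and \cite{BHB}, to each geometrically irreducible component of $C$ of degree $\ge m$, yields a bound $\ll_{n,d,\varepsilon}N^{d/m+\varepsilon}$ for the number of $\mathcal{O}_K$-points $(\alpha_0,y)$ satisfying $\mathrm{ht}(\alpha_0)\le N$ and $|y|\ll N^{O(1)}$, the second bound following from standard estimates on the roots of $R_f$ in terms of the heights of its coefficients. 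Multiplying by the $\ll N^{d(n-1)}$ choices of $(\alpha_1,\ldots,\alpha_{n-1})$ produces the claimed $N^{d(n-1+1/m)+\varepsilon}$.

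The main obstacle will be handling the exceptional specializations where $C$ either fails to be geometrically irreducible or acquires a $K$-rational component of degree strictly less than $m$, on which Heath-Brown's bound degrades. Following the strategy of \cite{Di2} together with a careful geometric analysis of the discriminant variety inside the coefficient space, one argues that these bad specializations form a proper Zariski-closed subset of $\mathbb{A}^{n-1}$, and therefore contribute only to the error term. A further technicality is the extension of the determinant method from $\Z$ to $\mathcal{O}_K$: the $p$-adic sieve of \cite{HB} must be replaced by a sieve over the prime ideals $\mathfrak{p}\subset\mathcal{O}_K$, and all heights must be rewritten through the fixed integral basis $(\omega_1,\ldots,\omega_d)$; this generalization is essentially formal but requires careful bookkeeping to ensure uniformity of the implied constants in $\varepsilon$.
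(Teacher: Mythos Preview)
Your proposal is correct and follows essentially the same route as the paper: build the Galois resolvent, fix all coefficients but $\alpha_0$, and apply the Heath--Brown/Browning--Heath--Brown determinant bound (the paper's Lemma~13) to the plane curve $\Phi(z,\alpha_0)=0$. The one substantive difference is how irreducibility of that curve is secured. You defer this to a Zariski-closure argument on the discriminant variety; the paper instead invokes Hering's theorem (Lemma~11): for \emph{any} fixed $\alpha_2,\dots,\alpha_{n-1}$, all but $O_{n,d}(1)$ choices of $\alpha_1$ make $X^n+\alpha_{n-1}X^{n-1}+\cdots+\alpha_1X+t$ have Galois group $S_n$ over $K(t)$, which forces $\Phi(z,\alpha_0)$ to be irreducible over $K(\alpha_0)$. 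This is sharper and more explicit than your proposed discriminant-variety analysis, and it sidesteps any worry about low-degree $K$-rational components entirely. Your reduction of the intransitive case via orbit decomposition is unnecessary: the paper's resolvent argument applies uniformly to any $G\subset S_n$, and in any event Proposition~1 already gives $O(N^{d(n-1)})$ for reducible $f$.
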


\section{Proof of Theorem 1}
\subsection{Counting reducible polynomials over $K$}
Firstly, we prove (1) in the case $G$ intransitive subgroup of $S_n$. The polynomials having such $G$ as Galois group are exactly those that factor over $K$.\\
Let $1\le k\le n/2$ and let$$\rho_k(n,N;K)=\rho_k(n,N)=\{f\in\mathscr{P}_{n,N}(K):f \mbox{ has a factor of degree }k \mbox{ over }K\}$$and$$\rho(n,N;K)=\rho(n,N)=\{f\in\mathscr{P}_{n,N}(K):f \mbox{ reducible } \mbox{over }K\}.$$
\begin{prop}
	One has$$\rho_k(n,N)\ll_{n,K}\begin{cases}
		N^{d(n-k)}&\mbox{if }k<n/2\\
		N^{d(n-k)}\log N&\mbox{if }k=n/2.
	\end{cases}$$In particular, if $n\ge3$,$$\rho(n,N)\ll_{n,K}N^{d(n-1)},$$as $N\rightarrow+\infty$.
\end{prop}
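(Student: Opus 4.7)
The plan is to enumerate factorizations $f = g h$ with $g, h \in \mathcal{O}_K[X]$ monic of degrees $k$ and $n-k$; by symmetry we may assume $k \le n/2$. A Mignotte-type inequality applied at each Archimedean embedding $K \hookrightarrow \mathbb{C}$ shows that $\operatorname{ht}(gh) \le N$ already forces $\operatorname{ht}(g), \operatorname{ht}(h) \ll_{n,K} N$, so the enumeration takes place in bounded boxes.

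For fixed $g = X^k + b_{k-1}X^{k-1} + \dots + b_0$, I parametrize the admissible $h$'s via the top $n-k$ coefficients of $f = gh$: writing $f_j = \sum_{i+l=j} b_i c_l$ with the conventions $b_k = c_{n-k} = 1$, the equations for $j = n-1, n-2, \dots, k$ form a triangular Horner-type system that determines the unknowns $c_{n-k-1}, \dots, c_0$ as polynomials in the $b_i$'s and in $(f_k, \dots, f_{n-1})$; the remaining $k$ equations for $j = 0, \dots, k-1$ then become affine-linear constraints on $(f_k, \dots, f_{n-1})$ whose coefficients are polynomials in the $b_i$'s. For $b_0 \ne 0$, a geometry-of-numbers count on the resulting box--slab intersection yields a per-$g$ bound of the form $N^{d(n-k)}$ divided by a suitable power of $|N_{K/\Q}(b_0)|$ (the wedge of the $k$ constraint covectors, for generic $g$, has leading monomial a power of $b_0$; this generalizes the clean $b_0^{n-1}$ of the $k=1$ case).

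Summing over the remaining parameters $b_1, \dots, b_{k-1}$ (each in a box of side $\ll N$) and then over $b_0 \in \mathcal{O}_K \setminus \{0\}$, the crucial input is the classical norm-sum
\[
\sum_{\substack{0 \ne b \in \mathcal{O}_K \\ \operatorname{ht}(b)\le N}} |N_{K/\Q}(b)|^{-s} \ll_{K,s} \begin{cases} 1 & \text{if } s > 1,\\ \log N & \text{if } s = 1, \end{cases}
\]
with $s$ determined by the exponent of $b_0$ produced above. For $k < n/2$ one lands in the range $s > 1$ (giving the clean $N^{d(n-k)}$), while $k = n/2$ is exactly the critical $s = 1$ and yields the extra $\log N$. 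The residual case $b_0 = 0$ forces $X \mid g$, hence $f(0) = 0$; factoring $f = X\tilde f$, this contribution is controlled by $\rho_{k-1}(n-1, N)$ and, by induction on $n$, is $\ll N^{d(n-k)}$. Finally, $\rho(n,N) \ll N^{d(n-1)}$ for $n \ge 3$ follows from $\rho(n,N) = \sum_{k=1}^{\lfloor n/2 \rfloor} \rho_k(n,N)$: the $k = 1$ term satisfies $1 < n/2$ (no log) and contributes $N^{d(n-1)}$, absorbing the strictly smaller contributions from $k \ge 2$.

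The main technical obstacle is the uniform per-$g$ geometry-of-numbers count, namely the concrete identification of the polynomial in $(b_0, \dots, b_{k-1})$ that governs the slab thickness. The $k = 1$ case is a direct Horner computation giving the clean monomial $b_0^{n-1}$; for $k \ge 2$ one must organize the $k$ linear constraints and their joint dependence on all the $b_i$'s so that the outer sum over $g$ collapses cleanly, without loss, to the norm-sum above. The adaptation of the standard $\Z$-lattice estimates to $\mathcal{O}_K$ is routine once one works place-by-place, replacing absolute values by the full family of Archimedean norms and reading sizes through $N_{K/\Q}$.
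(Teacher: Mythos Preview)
Your strategy differs from the paper's, and for $k=1$ it is essentially the computation behind Lemma~3 (Chela's method). For $k\ge 2$, however, the accounting as you state it does not close. You assert a per-$g$ bound of the shape $N^{d(n-k)}/|N_{K/\Q}(b_0)|^{s}$ and then sum \emph{freely} over $b_1,\dots,b_{k-1}$, each in a box of side $\ll N$. That free sum already costs $N^{d(k-1)}$, so the total is
\[
\ge\; N^{d(k-1)}\cdot N^{d(n-k)}\cdot\sum_{\substack{0\ne b_0\in\mathcal{O}_K\\ \h(b_0)\ll N}}|N_{K/\Q}(b_0)|^{-s}\;\gg\; N^{d(n-1)}
\]
for every $s$ (the inner sum is $\gg 1$), which exceeds the target $N^{d(n-k)}$ by the factor $N^{d(k-1)}$. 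A single coefficient $b_0$ cannot carry all the savings; the per-$g$ count must depend on the full size of $g$ (all the $b_i$, or equivalently something like $\max_i|N_{K/\Q}(b_i)|$ or the Mahler measure) in a way that the outer sum over $g$ collapses. Your own remark that identifying the governing polynomial in the $b_i$ is ``the main technical obstacle'' is exactly the missing step, and without it the scheme overcounts for every $k\ge 2$.

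The paper avoids this by treating $g$ and $h$ symmetrically rather than fixing $g$. Kronecker's theorem applied to each product $a_ib_j$ of coefficients shows it satisfies a monic equation whose coefficients are homogeneous of bounded degree in the $\alpha_\ell$; this gives $|N_{K/\Q}(a_ib_j)|\ll_{n,K}N^d$ for all $i,j$, hence $AB\ll_{n,K}N^d$ where $A=\max_i|N_{K/\Q}(a_i)|$ and $B=\max_j|N_{K/\Q}(b_j)|$. Counting pairs $(g,h)$ with prescribed $(A,B)$ via the principal-ideal count and summing,
\[
\rho_{q,r}(n,N)\ll\sum_{AB\ll N^d}A^{q-1}B^{r-1}\ll N^{dr}\sum_{A\ll N^d}A^{q-r-1},
\]
which is $\ll N^{dr}$ for $q<r$ and $\ll N^{dr}\log N$ for $q=r$. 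The coupling $AB\ll N^d$ is precisely what your scheme lacks: it is what prevents paying the free $N^{d(k-1)}$ for the non-constant coefficients of $g$.
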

\noindent
Note that $\rho(n,N)\ll_{n,K}N^{d(n-1)}\log N$ if $n=2$, which proves Theorem 1, part (1).
\begin{lemma}
	Let $\beta\in K$ be a root of $f(X)=X^n+\alpha_{n-1}X^{n-1}+\dots+\alpha_0\in\mathcal{O}_K[X]$ of height $N$. Then$$|N_{K/\Q}(\beta)|\ll_{n,K}N^d.$$
\end{lemma}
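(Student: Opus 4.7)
The plan is to work embedding by embedding. Since
$$N_{K/\Q}(\beta) = \prod_{\sigma: K \hookrightarrow \Qal} \sigma(\beta)$$
is a product over the $d$ embeddings of $K$ into $\Qal$, it suffices to prove a bound of the form $|\sigma(\beta)| \ll_K N$ uniformly in $\sigma$ and then multiply the resulting estimates.

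The first step is to control the size of the conjugates of the coefficients. Expanding each $\alpha_k = \sum_{i=1}^d a_i^{(k)} \omega_i$ in the fixed integral basis with $|a_i^{(k)}| \le N$ and applying $\sigma$ gives
$$|\sigma(\alpha_k)| \;\le\; N \sum_{i=1}^{d} |\sigma(\omega_i)| \;\le\; C_K\, N,$$
where $C_K := \max_\sigma \sum_i |\sigma(\omega_i)|$ depends only on $K$ and the (fixed) basis. The second step is to note that $\sigma(\beta) \in \Qal$ is a root of the monic polynomial $\sigma(f)(X) = X^n + \sigma(\alpha_{n-1})X^{n-1} + \dots + \sigma(\alpha_0)$, and to invoke Cauchy's classical bound on the modulus of the roots of a monic polynomial in terms of the sup-norm of its coefficients: any such root $\zeta$ satisfies $|\zeta| \le 1 + \max_k |\sigma(\alpha_k)|$. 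Combined with the previous inequality, this yields $|\sigma(\beta)| \le 1 + C_K N$.

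Multiplying the $d$ individual bounds produces
$$|N_{K/\Q}(\beta)| \;=\; \prod_{\sigma} |\sigma(\beta)| \;\le\; (1 + C_K N)^d \;\ll_{K}\; N^{d},$$
as required. I do not foresee any genuine obstacle: the argument is a routine combination of the product formula for the norm with the standard Cauchy bound on roots of monic polynomials, and neither ingredient interacts with the degree $n$ in a nontrivial way (the implied constant is even independent of $n$, although the statement only asserts $\ll_{n,K}$).
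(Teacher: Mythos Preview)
Your proof is correct and follows essentially the same approach as the paper: apply each embedding $\sigma_i$ to obtain a monic complex polynomial with coefficients bounded by $C_K N$, bound the resulting complex root, and multiply the $d$ estimates. The only cosmetic difference is that the paper invokes a slightly sharper root bound from \cite{Di2} (with the constant $(\sqrt[n]{2}-1)^{-1}$) instead of Cauchy's bound $1+\max_k|\sigma(\alpha_k)|$; your choice is arguably cleaner and even yields an implied constant independent of $n$.
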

\begin{proof}
	This follows from the analogous results for polynomials with coefficients over $\Co$ (see \cite{Di2}, Lemma 1). For all $i=1,\dots,d$, $\sigma_i(\beta)$ is a complex root of $\sigma_i\circ f\in\Co[X]$, hence$$|\sigma_i(\beta)|\le\frac{1}{\sqrt[n]{2}-1}\max_{1\le k\le n} \left| \frac{\sigma_i(\alpha_{n-k})}{\binom{n}{k}}\right| ^{1/k}.$$Then$$|N_{K/\Q}(\beta)|\le\left( \frac{1}{\sqrt[n]{2}-1}\right) ^d\prod_{i=1}^{d}\max_{1\le k\le n}|\sigma_i(\alpha_{n-k})|^{1/k}\ll_{n,d}N^d.$$
\end{proof}
By Proposition 1, it follows that$$\underset{\tiny\mbox{intransitive}}{\sum_{G\subset S_n}}N_n(N,G)=\rho(n,N)\ll_{n,K}N^{d(n-1)}$$for all $n\ge3$, as $N\rightarrow+\infty$.
\begin{proof}(Proposition 1) Assume that $f(X)=X^n+\alpha_{n-1}X^{n-1}+\dots+\alpha_0$ of height $\le N$ factors over $K$ as $f(X)=g(X)h(X)$, where\begin{align*}
		g(X)&=X^q+a_{q-1}X^{q-1}+\dots+a_0;\\
		h(X)&=X^r+b_{r-1}X^{r-1}+\dots+b_0,
	\end{align*}where $n=q+r$. We call this set of $f$'s $\rho_{q,r}(n,N)$. We therefore have to find an upper bound for the number of coefficients of $g$ and $h$ so that $f=gh$ and $\h(\alpha_i)\le N$ for all $i=0,\dots,n-1$.\\
	By Knonecker's theorem, every product $\zeta=a_ib_j$ is a root of an equation of the form$$\zeta^m+d_1\zeta^{m-1}+\dots+d_m=0,$$where $d_j=d_j(\alpha_0,\dots,\alpha_{n-1})$ is homogeneous of degree $j$ in the coefficients of $f$.\\
	Let $\sigma_i:K\hookrightarrow\Co$, $i=1,\dots,d$ be the $\Q$-embeddings of $K$ into $\Co$. In particular, if $\alpha\in\mathcal{O}_K$, $\alpha=\sum_{k=1}^{d}a_k\omega_k$ has height $\le N$, then for all $k=0,\dots,n-1$,$$|\sigma_i(\alpha)|\le C_{K,i} N$$for all $i=1,\dots,d$, where $C_{K,i}=\sum_{j=1}^{d}|\sigma_i(\omega_j)|$. Hence\begin{align*}
		|N_{K/\Q}(\alpha)|&=\Big|\prod_{i=1}^{d}\sigma_i(\alpha)\Big|\\
		&=\prod_{i=1}^{d}\Big|\sum_{i=1}^{d}\sigma_i(\omega_j)a_j\Big|\\
		&\le C_K N^d,
	\end{align*}where $C_K=\sum_{i=1}^{d}C_{K,i}$. It follows that since $\h(d_j)\ll_{n,K} N^j$,$$N_{K/\Q}(d_j)\ll_{n,K}N^{dj}$$for all $j$. Now,$$\Big(\frac{\zeta}{N}\Big)^m+\frac{d_1}{N}\Big(\frac{\zeta}{N}\Big)^{m-1}+\dots+\frac{d_m}{N^m}=0,$$hence $\frac{\zeta}{N}$ is a root of an equation with coefficients of norm$$N_{K/\Q}\Big(\frac{d_j}{N^j}\Big)\ll_{n,K}1.$$As in Lemma 1, one has $N_{K/\Q}\Big(\frac{\zeta}{N}\Big)\ll_{n,K,q,r}1,$ so$$N_{K/\Q}(\zeta)=N_{K/\Q}(a_ib_j)\ll_{n,K,q,r}N^d$$ for all $i,j$. Let\begin{align*}
		A&=\max_{i}|N_{K/\Q}(a_i)|;\\
		B&=\max_{j}|N_{K/\Q}(b_j)|.
	\end{align*}By the above$$AB\ll_{n,K,q,r}N^d.$$According to the Wiener-Ikehara Tauberian theorem, the number of principal ideals of norm $\le x$ is $\ll x$. Given $A,B$ sufficiently large, there are at most $\ll_{n,K}AqA^{q-2}=qA^{q-1}$ polynomials $g$ and $\ll_{n,K}rB^{r-1}$ polynomials $h$, since at least one coefficient of $g$ has norm $A$ ($q$-possibilities), the remaining $q-1$ have norm $\le A$, and the same for $h$. It total, for $A,B$ large enough the number of products $gh$ is at most$$\ll_{n,K}qrA^{q-1}B^{r-1}.$$It turns out that\begin{align*}
		\rho_{q,r}(n,N)&\ll_{n,K,q,r}qr\sum_{AB\ll N^d}A^{q-1}B^{r-1}\\
		&\ll_{n,K,q,r}\sum_{A\ll N^d}A^{q-1}\sum_{B\ll N^d/A}B^{r-1}\\
		&\ll\sum_{A\ll N^d}A^{q-1}\Big(\frac{N^d}{A}\Big)^r\\
		&=N^{dr}\sum_{A\ll N^d}A^{q-r-1}.
	\end{align*}We can assume $q\le r$.
	\begin{enumerate}
		\item[$\bullet$] If $q<r$, the last sum is convergent, so$$\rho_{q,r}(n,N)\ll_{n,K,q,r}N^{dr}$$as $N\rightarrow+\infty$.
		\item[$\bullet$] If $q=r$,\begin{align*}
			\rho_{q,r}(n,N)&\ll_{n,K,q,r}N^{dr}\sum_{A\ll N^d}\frac{1}{A}\\
			&\ll_{n,K,q,r}N^{dr}\log N
		\end{align*}as $N\rightarrow+\infty$.
	\end{enumerate}
\end{proof}

In fact, we go further by extending a result of Chela \cite{Ch} and proving an asymptotic for $\rho(n,N;K)$.
\begin{thm}
	Let $n\ge3$. Then$$\lim_{N\rightarrow+\infty}\frac{\rho(n,N;K)}{N^{d(n-1)}}=2^{d(n-1)}\left( D_{n,K}\cdot\left( \frac{C_KC_K'}{h_K}\right)^{n-1} +1+\frac{A_{n,K}k_{n,d}}{2^{d(n-1)}}\right), $$where $A_{n,K}$ is an explicit constant, $C_K$ is the residue at 1 of $\zeta_K$, $h_K$ is the class number of $K$,\begin{align*}
		D_{n,K}&=\underset{1<|N_{K/\Q\nu}|<C_K'N^d}{\sum_{\nu\in\mathcal{O}_K}}\frac{1}{|N_{K/\Q}\nu)|^{n-1}},\\
		C_K'&=\prod_{j=1}^{d}\Big|\sum_{k=1}^{d}\sigma_j(\omega_k)\Big|,\\
		k_{n,d}&=\mbox{vol}(R)=\int_{R}\dots\int dy_1^{(0)}\dots dy_1^{(n-2)}\dots dy_d^{(0)}\dots dy_d^{(n-2)},
	\end{align*}where $R$ is the region of the $d(n-1)$-dimensional Euclidean space defined by\begin{align*}
		|y_k^{(j)}|\le1\ \ \ \forall j,k,&&\Big|\sum_{j=0}^{n-2}y_k^{(j)}\Big|\le1\ \ \ \forall j,k.
	\end{align*}
\end{thm}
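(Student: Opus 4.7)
The plan is to extend Chela's argument \cite{Ch} to the number-field setting by reducing the count of reducible polynomials to those possessing a linear factor over $K$, parametrizing such polynomials by a pair (root, cofactor), and then converting the inner count over cofactors into a volume via geometry of numbers. First I would invoke Proposition 1, which already gives $\rho_k(n,N;K)\ll_{n,K} N^{d(n-2)}\log N$ for every $k\ge 2$. This is of strictly lower order than $N^{d(n-1)}$, so
$$\rho(n,N;K) = \rho_1(n,N;K) + O_{n,K}\!\bigl(N^{d(n-2)}\log N\bigr),$$
and it is enough to obtain an asymptotic for $\rho_1$. Each $f\in\rho_1$ factors as $f=(X-\beta)g$ with $\beta\in\mathcal{O}_K$ a root and $g\in\mathcal{O}_K[X]$ monic of degree $n-1$, and polynomials with two or more distinct roots in $\mathcal{O}_K$ lie in $\rho_2$, so the resulting overcount of pairs $(\beta,g)$ versus polynomials is absorbed into the same error term.

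Next, Lemma 1 truncates the sum over $\beta$ to the range $|N_{K/\Q}(\beta)|\le C_K'N^d$. For fixed $\beta$ the coefficients of $(X-\beta)g$ are affine-linear in $(b_0,\dots,b_{n-2})$, namely $\alpha_{n-1}=b_{n-2}-\beta$, $\alpha_k=b_{k-1}-\beta b_k$ for $1\le k\le n-2$, and $\alpha_0=-\beta b_0$. Interpreting these coordinates with respect to the fixed basis $(\omega_1,\dots,\omega_d)$ of $\mathcal{O}_K\otimes\R$, the constraints $\h(\alpha_k)\le N$ carve out a centrally symmetric convex body in $\R^{d(n-1)}$; a standard lattice-point count replaces its lattice-point number by its volume, with a boundary error of order $N^{d(n-1)-1}$ per $\beta$. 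The task therefore reduces to computing and summing these volumes.

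I would then split the sum over $\beta$ into three regimes according to $\nu=|N_{K/\Q}(\beta)|$. The case $\beta=0$ gives polynomials $Xg$ with $\h(g)\le N$; after rescaling the $b_j$'s by $N$, the admissible region matches (up to normalization constants absorbed into $A_{n,K}$) the region $R$ defined in the statement, contributing $A_{n,K}k_{n,d}N^{d(n-1)}$. The case $\nu=1$ corresponds to $\beta$ a unit of $\mathcal{O}_K$; combining Dirichlet's unit theorem with a direct volume estimate (the admissible body is close to the full box as $N\to\infty$) produces the constant-term contribution $2^{d(n-1)}N^{d(n-1)}$. For $\nu>1$, the dominant constraint $\h(\beta b_0)\le N$, iterated through the recursion $b_{k-1}=\alpha_k+\beta b_k$, yields a volume asymptotic to $(2N)^{d(n-1)}(C_KC_K')^{n-1}\nu^{-(n-1)}$; summing $\nu^{-(n-1)}$ over principal ideals of bounded norm via the Wiener-Ikehara theorem for the Dedekind zeta function of $K$ (residue $C_K$ at $s=1$, with the factor $1/h_K$ converting ideal-counts to principal-ideal counts, appearing to the $(n-1)$-st power in the iterated volume) delivers the leading term $D_{n,K}(C_KC_K'/h_K)^{n-1}$.

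The main obstacle I anticipate is the uniform volume computation for $\beta$ of moderate norm, where the admissible convex body in $(b_0,\dots,b_{n-2})$ depends sensitively on all archimedean embeddings of $\beta$ rather than on a single absolute value. The number-field analogue of Chela's one-dimensional interval estimate requires replacing intervals by suitable fundamental domains for $\mathcal{O}_K\otimes\R$, keeping the multiplicative constants uniform in $\beta$ through the regulator and the integral basis constants $C_K,C_K'$, and controlling the tail near $\nu\sim C_K'N^d$ carefully so that the truncated sum converges to $D_{n,K}$ and the three regimes add to precisely the stated limit.
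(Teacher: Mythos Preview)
Your overall architecture is exactly the paper's: use Proposition~1 to reduce $\rho$ to $\rho_1$, pass to the sum $\sum_\nu T_{n,N}(\nu)$ over possible roots (linear factors $X+\nu$), and split that sum into the three regimes $\nu=0$, $|N_{K/\Q}\nu|=1$, and $|N_{K/\Q}\nu|>1$. The $|N_{K/\Q}\nu|>1$ analysis you sketch is also the paper's Lemma~3.

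The gap is that you have the $\nu=0$ and unit contributions interchanged, and the reasoning you give for each would not produce the claimed constant. For $\beta=0$ one has $f=Xg$ with $g$ monic of degree $n-1$; the height condition on $f$ is simply $\h(b_j)\le N$ for all $j$, so the admissible set is the \emph{full} box $[-N,N]^{d(n-1)}$ with no sum constraint, and $T_{n,N}(0)\sim(2N)^{d(n-1)}$. The extra inequality $|\sum_j y_k^{(j)}|\le 1$ defining the region $R$ does not arise here. Conversely, for a unit $\nu$ the condition is $f(-\nu)=0$, a genuine codimension-$d$ linear slice of the coefficient box, so the body is \emph{not} close to the full box; in the paper this is exactly what produces the set $\mathscr{L}_n(N,h)$ and, after rescaling, the region $R$ with volume $k_{n,d}$ (Lemma~4 and Corollary~1). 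A second subtlety you miss in the unit regime is that for $d\ge 2$ (outside the imaginary quadratic case) there are infinitely many units; the paper observes that $T_{n,N}(\nu)=0$ for all but finitely many of them because $\h(\nu)$ is unbounded, and packages the remaining finite sum into the constant $A_{n,K}$. Your sentence ``the admissible body is close to the full box'' would, if taken literally, make the unit contribution infinite.

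So the plan is right but the bookkeeping of which regime yields which of the three constants $2^{d(n-1)}$, $A_{n,K}k_{n,d}$, $2^{d(n-1)}D_{n,K}(C_KC_K'/h_K)^{n-1}$ needs to be corrected before the argument goes through.
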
We assume from now on that $n\ge3$. By Proposition 1 and by definition of $\rho,\rho_k$ if follows that $$\underset{N}{\lim}\frac{\rho(n,N)}{N^{d(n-1)}}=\underset{N}{\lim}\frac{\rho_1(n,N)}{N^{d(n-1)}}.$$So we reduce to prove the asymptotic for$$\frac{\rho_1(n,N)}{N^{d(n-1)}}$$as $N\rightarrow+\infty$.

Let $\nu\in\mathcal{O}_K$ and let$$T_{n,N}(\nu;K)=T_{n,N}(\nu):=\{f\in\in\mathscr{P}_{n,N}(K):f\mbox{ has a linear factor }X+\nu\}.$$
\begin{lemma}
	One has $$\rho_1(n,N)-\underset{|N_{K/\Q}\nu|\ll_{n,K} N^d}{\sum_{\nu\in\mathcal{O}_K}}T_{n,N}(\nu)=o(N^{d(n-1)})$$as $N\rightarrow+\infty$.
\end{lemma}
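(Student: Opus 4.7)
The plan is to recognize the sum on the left-hand side as a double count of incidence pairs. Write
\[
\sum_\nu T_{n,N}(\nu) \;=\; \sum_\nu \#\{f \in \mathscr{P}_{n,N}(K) : (X+\nu) \mid f\},
\]
and interchange the order of summation. By Lemma 1 applied to the root $-\nu$, any linear factor $X+\nu$ of any $f \in \mathscr{P}_{n,N}(K)$ satisfies $|N_{K/\Q}(\nu)| \ll_{n,K} N^d$, so the truncated range already captures every such $\nu$ without loss. Setting $\ell(f) = \#\{\nu \in \mathcal{O}_K : (X+\nu) \mid f\}$, Fubini gives
\[
\sum_\nu T_{n,N}(\nu) \;=\; \sum_{f \in \rho_1(n,N)} \ell(f),
\]
so the quantity to be bounded becomes
\[
\sum_\nu T_{n,N}(\nu) - \rho_1(n,N) \;=\; \sum_{f \in \rho_1(n,N)} (\ell(f)-1) \;\le\; n \cdot R(n,N),
\]
where $R(n,N)$ is the number of $f \in \mathscr{P}_{n,N}(K)$ admitting at least two distinct linear factors over $\mathcal{O}_K$. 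It is therefore enough to prove $R(n,N) = o(N^{d(n-1)})$.

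For $n \ge 4$ this is an immediate consequence of Proposition 1 applied with $k = 2$: two distinct linear factors combine to a degree-two factor $(X+\nu_1)(X+\nu_2) \in \mathcal{O}_K[X]$, whence $R(n,N) \le \rho_2(n,N) \ll_{n,K} N^{d(n-2)} \log N$, which is $o(N^{d(n-1)})$.

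The main obstacle is the case $n = 3$, where $k = 2$ exceeds $n/2$ and Proposition 1 is not available in the form stated. The saving in this case comes from the fact that in degree three, two distinct linear factors force $f$ to split completely as $f(X) = (X+\nu_1)(X+\nu_2)(X+\nu_3)$ with $\nu_i \in \mathcal{O}_K$. Lemma 1 gives $|N_{K/\Q}(\nu_i)| \ll_{n,K} N^d$ for each $i$, and the constant term $\alpha_0 = -\nu_1\nu_2\nu_3$ being of height $\le N$ forces the multiplicative constraint $\prod_i |N_{K/\Q}(\nu_i)| \ll_K N^d$. A hyperbola-type decomposition of the triple Dirichlet sum
\[
\sum_{m_1 m_2 m_3 \le C N^d} r_K(m_1)\, r_K(m_2)\, r_K(m_3),
\]
where $r_K(m) = \#\{\nu \in \mathcal{O}_K \setminus \{0\} : |N_{K/\Q}(\nu)| = m\}$, combined with the Wiener--Ikehara estimate $\sum_{m \le X} r_K(m) \ll_K X$ already used in the proof of Proposition 1, gives a bound of $O_K(N^d (\log N)^2)$. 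The degenerate sub-cases in which some $\nu_i$ vanishes are absorbed into an $O_K(N^d \log N)$ contribution by the same argument applied to the non-vanishing coordinates. Thus $R(3, N) \ll_K N^d (\log N)^2 = o(N^{2d})$, and the combination of the two cases completes the proof.
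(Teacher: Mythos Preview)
Your overall strategy---Fubini, then bounding $\sum_f(\ell(f)-1)$ by a constant times the number $R(n,N)$ of polynomials admitting at least two distinct $\mathcal{O}_K$-linear factors---is exactly the paper's. For $n\ge4$ your reduction to $\rho_2(n,N)$ via Proposition~1 is correct and coincides with the paper's argument (the paper writes $R_i\le\rho_1'(n,N)<\rho_2(n,N)$ and then invokes Proposition~1).

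You are also right to isolate $n=3$: Proposition~1 is stated only for $k\le n/2$, so $k=2$ is outside its hypotheses when $n=3$, and indeed $\rho_2(3,N)=\rho_1(3,N)\asymp N^{2d}$ is \emph{not} $o(N^{2d})$. The paper simply applies Proposition~1 to $\rho_2$ without singling this case out, so your instinct to treat it separately is a genuine improvement in rigor.

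However, your patch for $n=3$ has a real gap. You set
\[
r_K(m)=\#\{\nu\in\mathcal{O}_K\setminus\{0\}:|N_{K/\Q}(\nu)|=m\}
\]
and claim a Wiener--Ikehara bound $\sum_{m\le X}r_K(m)\ll_K X$. That estimate is for \emph{principal ideals}, not elements: whenever $\mathcal{O}_K^\times$ is infinite (every $K$ other than $\Q$ and the imaginary quadratic fields) one already has $r_K(1)=|\mathcal{O}_K^\times|=\infty$, so your triple hyperbola sum diverges term by term. The norm constraint $\prod_i|N_{K/\Q}(\nu_i)|\ll N^d$ alone cannot control the number of triples $(\nu_1,\nu_2,\nu_3)$. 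To repair this you must use the height hypothesis on $f$ itself: the embedding-wise inequality inside the proof of Lemma~1 gives $|\sigma_j(\nu_i)|\ll_K N$ for each embedding $\sigma_j$, which confines every $\nu_i$ to $O_K(N^d)$ lattice points in the Minkowski space. Any saving over the trivial $O(N^{3d})$ then has to come from combining this archimedean box with the product constraint coordinate by coordinate, not from a purely norm-based Dirichlet divisor count.
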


\begin{proof}
	Note that $\sum_\nu T_{n,N}(\nu)\ge\rho_1(n,N)$, since in the first sum a polynomial may be counted repeatedly. Let $R_i$ be the number of $f\in\mathscr{P}_{n,N}(K)$ with exactly $i$ distinct linear factors, and let $\rho_1'(n,N)$ be the number of $f\in\mathscr{P}_{n,N}(K)$ with two linear factors (not necessarily distinct).\\
	Each of the $R_i$ is counted in $\sum_\nu T_{n,N}(\nu)$ exactly $i$ times. Moreover for $i>1$,$$R_i\le\rho_1'(n,N)<\rho_2(n,N).$$By Proposition 1, $\rho_2(n,N)=o(N^{d(n-1)})$, therefore $\rho_1(n,N)$ and $\sum_\nu T_{n,N}(\nu)$ differ in a $o(N^{d(n-1)})$ term.
\end{proof}

\begin{lemma}
	One has$$\lim_{N\rightarrow+\infty}\underset{1<|N_{K/\Q}\nu|\le C_K' N^d}{\sum_{\nu\in\mathcal{O}_K}}\frac{T_{n,N}(\nu)}{N^{d(n-1)}}=2^{d(n-1)}D_{n,K}\cdot\left( \frac{C_KC_K'}{h_K}\right)^{n-1},$$where $D_{n,K}\le\zeta_K(n-1)$.
\end{lemma}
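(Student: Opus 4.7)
The plan is to compute $T_{n,N}(\nu)$ for each fixed $\nu$ by a lattice-point estimate and then sum. First I would observe that each $f$ counted by $T_{n,N}(\nu)$ corresponds bijectively to the cofactor $g(X)=X^{n-1}+\beta_{n-2}X^{n-2}+\dots+\beta_0\in\mathcal{O}_K[X]$ via $f=(X+\nu)g$. Expanding yields $\alpha_k=\nu\beta_k+\beta_{k-1}$ for $k=0,\dots,n-2$ (with $\beta_{-1}:=0$) and $\alpha_{n-1}=\nu+\beta_{n-2}$, so $T_{n,N}(\nu)$ is the number of $(\beta_0,\dots,\beta_{n-2})\in\mathcal{O}_K^{n-1}$ satisfying $\mbox{ht}(\nu\beta_k+\beta_{k-1})\le N$ for $k=0,\dots,n-2$ together with the top constraint $\mbox{ht}(\nu+\beta_{n-2})\le N$.

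For fixed $\nu$ I would then estimate the count iteratively. Once $\beta_{k-1}$ is chosen, the condition $\mbox{ht}(\nu\beta_k+\beta_{k-1})\le N$ confines $\beta_k$ to a coset of the sublattice $\nu\mathcal{O}_K\subset\mathcal{O}_K$ intersected with a translate of the box $B_N:=\{\alpha\in\mathcal{O}_K:\mbox{ht}(\alpha)\le N\}$. Under the $\omega$-basis identification $\mathcal{O}_K\cong\Z^d$, $\nu\mathcal{O}_K$ has covolume $|N_{K/\Q}(\nu)|$, so a standard lattice-point estimate produces $\sim(2N)^d/|N_{K/\Q}(\nu)|$ valid $\beta_k$ to leading order. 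Iterating this for $k=0,\dots,n-2$ and imposing the residual top constraint (which forces $\mbox{ht}(\nu)\lesssim N$, consistent with the assumed range $|N_{K/\Q}(\nu)|\le C_K'N^d$) should yield
\begin{equation*}
T_{n,N}(\nu)=\frac{(2N)^{d(n-1)}}{|N_{K/\Q}(\nu)|^{n-1}}\left(\frac{C_KC_K'}{h_K}\right)^{n-1}+o\!\left(\frac{N^{d(n-1)}}{|N_{K/\Q}(\nu)|^{n-1}}\right),
\end{equation*}
the shape factor $(C_KC_K'/h_K)^{n-1}$ arising from combining $n-1$ applications of the Wiener--Ikehara density of principal ideals of $\mathcal{O}_K$ (as already invoked in the proof of Proposition 1) with the height-to-norm conversion $\mbox{ht}(\alpha)\le N\Rightarrow|N_{K/\Q}(\alpha)|\le C_K'N^d$.

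Next I would divide by $N^{d(n-1)}$ and sum over $\nu\in\mathcal{O}_K$ with $1<|N_{K/\Q}(\nu)|\le C_K'N^d$. The leading-order contribution is $2^{d(n-1)}(C_KC_K'/h_K)^{n-1}\sum_\nu 1/|N_{K/\Q}(\nu)|^{n-1}$, and the inner sum is $D_{n,K}$ by definition. The bound $D_{n,K}\le\zeta_K(n-1)<\infty$ (for $n\ge3$) guarantees absolute convergence and legitimises the passage to the limit in $N$, giving the claimed formula.

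The hardest part will be pinning down the constant $(C_KC_K'/h_K)^{n-1}$ uniformly in $\nu$. The top constraint $\mbox{ht}(\nu+\beta_{n-2})\le N$ is particularly delicate because it couples $\beta_{n-2}$ directly to $\nu$, breaking the symmetry of the other constraints, and one must check that this coupling does not spoil the leading-order lattice-point count. A secondary concern is controlling boundary errors uniformly in $\nu$, especially when $|N_{K/\Q}(\nu)|$ is close to the endpoints $1$ or $C_K'N^d$, and arguing that these errors are summable so that they contribute only to the $o(1)$ remainder.
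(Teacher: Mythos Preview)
Your overall architecture matches the paper's: write $f=(X+\nu)g$, extract the recursion $\beta_0=\alpha_0/\nu$, $\beta_i=(\alpha_i-\beta_{i-1})/\nu$, $\alpha_{n-1}=\beta_{n-2}+\nu$, count admissible $\beta_i$ iteratively with $\beta_{i-1}$ fixed, and then sum over $\nu$. The paper also resolves what you call the ``particularly delicate'' top constraint: by induction it shows $|N_{K/\Q}\beta_{n-2}|\le C_K'N^d\sum_{j=1}^{n-1}(N_{K/\Q}\nu)^{-j}$, so for $N_{K/\Q}\nu\ge 2$ and $N$ large the condition $\h(\nu+\beta_{n-2})\le N$ is automatically met and contributes no further cut. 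The boundary $N_{K/\Q}\nu=C_K'N^d$ is handled separately and shown to be $o(N^{d(n-1)})$.

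Where your proposal has a genuine gap is the per-step count and the resulting constant. A direct covolume argument on $\nu\mathcal{O}_K\subset\mathcal{O}_K$ gives $\sim(2N)^d/|N_{K/\Q}\nu|$ choices for $\beta_k$ \emph{with no Wiener--Ikehara input at all}, and iterating would produce $T_{n,N}(\nu)\sim(2N)^{d(n-1)}/|N_{K/\Q}\nu|^{n-1}$ without the factor $(C_KC_K'/h_K)^{n-1}$. The paper does something different: it converts $\h(\alpha_i)\le N$ into the statement that $N_{K/\Q}\beta_i$ lies in an interval of length $C_K'(2N)^d/N_{K/\Q}\nu$, and then counts integral $\beta_i$ in that norm-range via the average density $C_K/h_K$ of principal ideals; this is what manufactures the factor $C_KC_K'/h_K$ per step. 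You have grafted the paper's constant onto a lattice argument that does not produce it, so your displayed asymptotic for $T_{n,N}(\nu)$ does not follow from your own reasoning. To reach the stated constant you must commit to the norm-range route as the paper does; if you stay with the covolume route you will obtain a different leading constant and would then need to reconcile the two.
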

\begin{proof}
	Since $T_{n,N}(\nu)=T_{n,N}(\nu')$ if $N_{K/\Q}\nu=N_{K/\Q}\nu'$, we can assume that $2\le N_{K/\Q}\nu\le C_K'N^d$. A polynomial $f\in\mathscr{P}_{n,N}$ with a linear factor $X+\nu$ is of the form\begin{equation}
		f(X)=(X+\nu)(X^{n-1}+\beta_{n-2}X^{n-2}+\dots+\beta_0)
	\end{equation}for some $\beta_j\in\mathcal{O}_K$ for all $j$. Thus $T_{n, N}(\nu)$ is equal to the number of $(n-1)$-tuples $(\beta_{n-2},\dots,\beta_0)\in\mathcal{O}_K^{n-1}$ satisfying (2) for $f$ of height $\le N$. We get\begin{equation}
		\begin{cases}
			\beta_0=\frac{\alpha_0}{\nu}&\\
			\beta_i=\frac{\alpha_i-\beta_{i-1}}{\nu}&i=1,\dots,n-2\\
			\alpha_{n-1}=\beta_{n-2}+\nu.&
		\end{cases}
	\end{equation}Write $\alpha_i=\sum_{k=1}^{d}a_k^{(i)}\omega_k$ and $\beta_i=\sum_{k=1}^{d}b_k^{(i)}\omega_k$ for all $i$, where $a_k^{(i)},b_k^{(i)}\in\Z$ for all $i,k$. Now fix $\beta_{i-1}$ and let $\alpha_i$ varies with $\h(\alpha_i)\le N$. One gets from (3),\begin{align*}
		N_{K/\Q}\beta_i&=\prod_{j=1}^{d}\sigma_j(\beta_i)\\
		&=\prod_{j=1}^{d}\sum_{k=1}^{d}(a_k^{(i)}-b_k^{(i-1)})\sigma_j(\omega_k)\cdot\frac{1}{N_{K/\Q}\nu}.
	\end{align*}Once fixed $\beta_{i-1}$ (i.e. $b_k^{i-1}$ for all $k$), the norm of $\beta_i$ lies in an interval of amplitude$$C_K'\frac{(2N)^d}{N_{K/\Q}\nu},$$where $C_K'=\prod_{j=1}^{d}\Big|\sum_{k=1}^{d}\sigma_j(\omega_k)\Big|.$ By definition of the ideal class group of $K$, the set of principal ideals of $\mathcal{O}_K$ is the identity element.
	Let $L$ denote the average over $m$ of the number of principal ideals of norm $m$. The uniform distribuition of the ideals among the $h_K$ ideal classes of $\mathcal{O}_K$ and the Wiener-Ikehara Tauberian theorem imply that$$\frac{1}{h_K}\sum_{m\le x}|\{I\subseteq\mathcal{O}_K:N(I)=m\}|\sim\frac{1}{h_K}C_Kx\sim Lx;$$hence$$L=\frac{C_K}{h_K},$$where $C_K$ is the residue at 1 of $\zeta_K$. Therefore there are$$\left[ \frac{C_KC_K'}{h_K}\frac{(2N)^d}{N_{K/\Q}\nu}\right]\mbox{ or } \left[ \frac{C_KC_K'}{h_K}\frac{(2N)^d}{N_{K/\Q}\nu}\right]+1$$integral elements $\beta_i$. The total number of solutions of the second equation of (3) is of the form$$\prod_{i=1}^{n}\left(\frac{C_KC_K'}{h_K}\frac{(2N)^d}{N_{K/\Q}\nu}+r_{\nu,i} \right), $$where $r_{\nu,i}=0$ or $1$. By induction$$\beta_{n-2}=\frac{\alpha_{n-2}}{\nu}-\frac{\alpha_{n-1}}{\nu^2}+\dots+(-1)^{n-2}\frac{\alpha_0}{\nu^{n-1}},$$from which$$|N_{K/\Q}\beta_{n-2}|\le C_K'N^d\left(\frac{1}{N_{K/\Q}\nu}+\frac{1}{(N_{K/\Q}\nu)^2}+\dots+\frac{1}{(N_{K/\Q}\nu)^{n-1}} \right). $$So for $\nu\in\mathcal{O}_K$ with $2\le N_{K/\Q}\nu<C_K'N^d$, the values of $\beta_{n-2}$ also satisfy the third equation in (3) provided $N$ is large enough. We have therefore\begin{multline*}
		\underset{1<|N_{K/\Q}\nu|\le C_K' N^d}{\sum_{\nu\in\mathcal{O}_K}}T_{n,N}(\nu)=\underset{2\le N_{K/\Q}\nu< C_K' N^d}{\sum_{\nu\in\mathcal{O}_K}}H_K(\nu)\cdot T_{n,N}(\nu)\\+\underset{N_{K/\Q}\nu= C_K' N^d}{\sum_{\nu\in\mathcal{O}_K}}H_K(\nu)\cdot T_{n,N}(\nu)\\
		=\underset{2\le N_{K/\Q}\nu< C_K' N^d}{\sum_{\nu\in\mathcal{O}_K}}H_K(\nu)\cdot\prod_{i=1}^{n-1}\left(\frac{C_KC_K'}{h_K}\frac{(2N)^d}{N_{K/\Q}\nu}+r_{\nu,i} \right)\\
		+\underset{N_{K/\Q}\nu= C_K' N^d}{\sum_{\nu\in\mathcal{O}_K}}H_K(\nu)\cdot T_{n,N}(\nu).
	\end{multline*}If $N_{K/\Q}(\nu)=C_K'N^d$, by arguing as before we get that $T_{n,N}(\nu)\ll_{n,K}1$. Then the last sum is$$\ll_{n,K}\underset{N_{K/\Q}\nu= C_K' N^d}{\sum_{\nu\in\mathcal{O}_K}}1\sim\frac{C_KC_K'}{h_K}N^d=o(N^{d(n-1)})$$for $n\ge3$. Finally,\begin{multline*}
		\underset{1<|N_{K/\Q}\nu|\le C_K' N^d}{\sum_{\nu\in\mathcal{O}_K}}T_{n,N}(\nu)=\underset{2\le N_{K/\Q}\nu< C_K' N^d}{\sum_{\nu\in\mathcal{O}_K}}H_K(\nu)\cdot\prod_{i=1}^{n-1}\left(\frac{C_KC_K'}{h_K}\frac{(2N)^d}{N_{K/\Q}\nu}+r_{\nu,i} \right)\\+o(N^{d(n-1)})\\
		=\underset{2\le N_{K/\Q}\nu< C_K' N^d}{\sum_{\nu\in\mathcal{O}_K}}H_K(\nu)\cdot\left(\left(\frac{C_KC_K'}{h_K}\frac{(2N)^d}{N_{K/\Q}\nu} \right)^{n-1}+O_{n,K}(N^{d(n-3)})  \right)\\+o(N^{d(n-1)})\\
		=N^{d(n-1)}\left( \frac{2^dC_KC_K'}{h_K}\right)^{n-1} \underset{2\le N_{K/\Q}\nu< C_K' N^d}{\sum_{\nu\in\mathcal{O}_K}}H_K(\nu)\cdot\frac{1}{(N_{K/\Q}\nu)^{n-1}}\\+o(N^{d(n-1)})\\
		=N^{d(n-1)}\left( \frac{2^dC_KC_K'}{h_K}\right)^{n-1}\cdot \underset{2\le |N_{K/\Q}\nu|< C_K' N^d}{\sum_{\nu\in\mathcal{O}_K}} \frac{1}{|N_{K/\Q}\nu|^{n-1}}+o(N^{d(n-1)})\\
		=N^{d(n-1)}\left( \frac{2^dC_KC_K'}{h_K}\right)^{n-1}\cdot D_{n,K}+o(N^{d(n-1)}).
	\end{multline*}
\end{proof}Recall that $\alpha_j=\sum_{k=1}^{d}a_k^{(j)}\omega_k$ for all $j=0,\dots,n-1$. Let$$h(f):=(h_1(f),\dots,h_d(f))\in\Z^d,$$where $h_k(f)=a_{k}^{(0)}+\dots+a_k^{(n-1)}$ for all $k=1,\dots,d$. Define$$\mathscr{L}_n(N,h):=\{f\in\mathscr{P}_{n,N}(K):h(f)=h\}$$and $L_n(N,h)=|\mathscr{L}_n(N,h)|$. We have\begin{equation}
	L_n(N,h)=L_n(N,h')
\end{equation}if $h'_k=\pm h_k$ for all $k$; moreover, by a counting argument as in Lemma 3, it holds\begin{equation}
	\underset{|N_{K/\Q}\nu|=1}{\sum_{\nu\in\mathcal{O}_K}}T_{n,N}(\nu)\asymp A_{n,K}L_n(N,(1,\dots,1))
\end{equation}for some positive constant $A_{n,K}$. Note that in the left hand side, $ T_{n,N}(\nu)=0 $ for almost all $\nu\in\mathcal{O}_K^{\times}$, since $\h(\nu)$ is arbitrary large by Dirichlet's unit theorem.
\begin{lemma}
	For all $h\in\Z^d$,$$\lim_{N\rightarrow+\infty}\frac{L_n(N,h)}{N^{d(n-1)}}=k_{n,d}.$$
\end{lemma}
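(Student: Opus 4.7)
The plan is to recognize $L_n(N,h)$ as a lattice point count inside a bounded polytope of full dimension $d(n-1)$, and then extract the asymptotic from the classical volume principle. Writing each coefficient as $\alpha_j=\sum_{k=1}^{d}a_k^{(j)}\omega_k$ with $a_k^{(j)}\in\Z$, the conditions $\h(f)\le N$ and $h(f)=h$ translate respectively to $|a_k^{(j)}|\le N$ for all $j=0,\dots,n-1$ and $k=1,\dots,d$, and to $\sum_{j=0}^{n-1}a_k^{(j)}=h_k$ for each $k=1,\dots,d$. Using the latter equations to eliminate $a_k^{(n-1)}=h_k-\sum_{j=0}^{n-2}a_k^{(j)}$, one identifies $\mathscr{L}_n(N,h)$ with the set of integer points $(a_k^{(j)})_{k=1,\dots,d,\,j=0,\dots,n-2}\in\Z^{d(n-1)}$ lying in the bounded polytope
\[
R_N(h)=\Big\{(a_k^{(j)})\in\R^{d(n-1)}:|a_k^{(j)}|\le N\ \forall j,k,\ \Big|h_k-\sum_{j=0}^{n-2}a_k^{(j)}\Big|\le N\ \forall k\Big\}.
\]

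Next I would apply the Lipschitz-boundary lattice point principle: since $R_N(h)$ is a bounded convex polytope in $\R^{d(n-1)}$ with $O_{n,d}(1)$ affine facets, each of $(d(n-1)-1)$-dimensional measure $\ll_{n,d}N^{d(n-1)-1}$, Davenport's theorem yields
\[
L_n(N,h)=\mbox{vol}(R_N(h))+O_{n,d}\bigl(N^{d(n-1)-1}\bigr).
\]
The rescaling $y_k^{(j)}=a_k^{(j)}/N$ gives $\mbox{vol}(R_N(h))=N^{d(n-1)}\cdot\mbox{vol}(R^{(N)}(h))$, where $R^{(N)}(h)\subset\R^{d(n-1)}$ is defined by $|y_k^{(j)}|\le 1$ for all $j,k$ and $\bigl|\sum_{j=0}^{n-2}y_k^{(j)}-h_k/N\bigr|\le 1$ for all $k$.

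To finish, for fixed $h$ the symmetric difference between $R^{(N)}(h)$ and the region $R$ in the statement is contained in a union of $d$ slabs, each of thickness $O(|h_k|/N)$ in a single linear coordinate direction and otherwise bounded inside $[-1,1]^{d(n-1)}$, so its Lebesgue measure is $O_{n,d,h}(1/N)$. Therefore $\mbox{vol}(R^{(N)}(h))=k_{n,d}+O_{n,d,h}(1/N)$, and dividing the displayed estimate by $N^{d(n-1)}$ gives $L_n(N,h)/N^{d(n-1)}\to k_{n,d}$ as desired; in particular the limit is independent of $h$. The main (and essentially the only) technical step is the Lipschitz lattice point count, but since $R_N(h)$ is a polytope with a bounded number of affine facets the required bound follows directly from the standard formulation of Davenport's theorem, so no substantive obstacle arises.
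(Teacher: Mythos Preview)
Your proof is correct and follows essentially the same lattice-point-to-volume argument as the paper. The only difference is organizational: the paper first reduces to $h=0$ via the sandwich $L_n(N-\max_k|h_k|,0)\le L_n(N,h)\le L_n(N+\max_k|h_k|,0)$ and then computes $\lim_{N\to\infty} L_n(N,0)/N^{d(n-1)}=k_{n,d}$ by the volume principle, whereas you apply Davenport's theorem directly to the polytope $R_N(h)$ and let the $h$-dependence disappear in the rescaled volume; your route is slightly more direct and even yields a quantitative error term.
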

\begin{proof}
	By (4) we may assume that $h_k\ge0$ for all $k$. Let $f\in\mathscr{L}_n(N,0)$ and let$$f'(X)=X^n+\alpha_{n-1}X^{n-1}+\dots+\alpha_0+\sum_{k=1}^{d}h_k\omega_k.$$Then $f'\in\mathscr{L}_n(N+\underset{k}{\max}\ h_k,h)$. This implies$$L_n(N,0)\le L_n(N+\underset{k}{\max}\ h_k,h).$$Let now $f\in\mathscr{L}_n(N,h)$ and let$$f'(X)=X^n+\alpha_{n-1}X^{n-1}+\dots+\alpha_0-\sum_{k=1}^{d}h_k\omega_k.$$We have$$L_n(N,h)\le L_n(N+\underset{k}{\max}\ h_k,0).$$It follows that$$\frac{L_n(N-\max_k h_k,0)}{L_n(N,0)}\le\frac{L_n(N,h)}{L_n(N,0)}\le\frac{L_n(N+\max_k h_k,0)}{L_n(N,0)}.$$
	In particular$$L_n(N,h)\sim L_n(N,0)$$for all $h$, as $N\rightarrow+\infty$.\\
	Our claim is therefore$$\lim_{N\rightarrow+\infty}\frac{L_n(N,0)}{N^{d(n-1)}}=k_{n,d}.$$Let $E_{nd}$ be the $nd$-dimensional Euclidean space with coordinates\\ $x_1^{(0)},\dots,x_1^{(n-1)},\dots,x_d^{(0)},\dots,x_d^{(n-1)}$. Let $\Lambda_{nd}$ be the lattice of integral points in $E_{nd}$. Then $L_n(N,0)$ corresponds to the number of integral points of $\Lambda_{nd}$ which lie inside the cube$$C_N:\ |x_k(j)|\le N\ \ \ \forall j=0,\dots,n-1,\ \ \ \forall k=1,\dots,d$$and the hyperplanes$$H_k:\ x_k^{(0)}+\dots,x_k^{(n-1)}=0\ \ \ \forall k=1,\dots,d.$$That is,$$L_n(N,0)=|\Lambda_{nd}\cap C_N\cap H|,$$where $H=H_1\cap\dots\cap H_d$. $H$ is a $d(n-1)$-dimensional space; we indentify it with $E_{d(n-1)}$ with coordinates $x_k^{(0)},\dots,x_k^{(n-2)}$ for all $k=1,\dots,d$. Also,\begin{align*}
		C_N\cap H:\ |x_k^{(j)}|\le N, &&\Big|\sum_{j=0}^{n-2}y_k^{(j)}\Big|\le N
	\end{align*}for all $j=0,\dots,n-2$ and $k=1,\dots,d$.\\
	But $$\lim_{N\rightarrow+\infty}\frac{|\Lambda_{nd}\cap C_N\cap H|}{N^{d(n-1)}}=vol(R),$$where $R$ is the region obtained transforming $C_N\cap H$ by the substituition $x_k^{(j)}=Ny_k^{(j)}$ for all $j,k$. We conclude that\begin{multline*}
		\lim_{N\rightarrow+\infty}\frac{L_n(N,0)}{N^{d(n-1)}}=vol(R)=\int_{R}\dots\int dy_1^{(0)}\dots dy_1^{(n-2)}\dots dy_d^{(0)}\dots dy_d^{(n-2)}\\=k_{n,d}.
	\end{multline*}
	
\end{proof}Lemma 4 and (5) yield the following.
\begin{coroll}
	$$\lim_{N\rightarrow+\infty}\underset{|N_{K/\Q}\nu|=1}{\sum_{\nu\in\mathcal{O}_K}}\frac{T_{n,N}(\nu)}{N^{d(n-1)}}=A_{n,K}k_{n,d}.$$
\end{coroll}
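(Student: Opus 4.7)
The plan is to chain equation (5) with Lemma 4 applied at $h=(1,\dots,1)\in\Z^d$. First I would read (5) as the asymptotic equivalence
$$\underset{|N_{K/\Q}\nu|=1}{\sum_{\nu\in\mathcal{O}_K}}T_{n,N}(\nu)\;\sim\;A_{n,K}\cdot L_n(N,(1,\dots,1))$$
as $N\to+\infty$, rather than a mere order-of-magnitude bound; this is the content of the counting argument alluded to in (5), which mirrors the proof of Lemma 3 via the recursion (3). By the remark following (5) only finitely many units $\nu$ contribute to the left-hand side for any given $N$, and by the symmetry (4) the precise value of the constant $A_{n,K}$ does not depend on the sign pattern chosen in the components of $h$.

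Second, I would divide through by $N^{d(n-1)}$ and invoke Lemma 4 with $h=(1,\dots,1)$, which gives
$$\lim_{N\to+\infty}\frac{L_n(N,(1,\dots,1))}{N^{d(n-1)}}=k_{n,d}.$$
Combining the two displays yields
$$\lim_{N\to+\infty}\underset{|N_{K/\Q}\nu|=1}{\sum_{\nu\in\mathcal{O}_K}}\frac{T_{n,N}(\nu)}{N^{d(n-1)}}=A_{n,K}\cdot k_{n,d},$$
which is the claim.

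The main obstacle is upgrading $\asymp$ in (5) to a genuine asymptotic with the explicit constant $A_{n,K}$, and justifying that distinct units contribute additively to leading order. The second point is an inclusion--exclusion check: one must argue that two distinct units $\nu\ne\nu'$ jointly double-count only $o(N^{d(n-1)})$ polynomials in $\mathscr{P}_{n,N}(K)$. Such overlap is bounded above by the count of $f\in\mathscr{P}_{n,N}(K)$ possessing the degree-two factor $(X+\nu)(X+\nu')$ over $K$, hence by $\rho_2(n,N)$. Proposition 1 gives $\rho_2(n,N)\ll_{n,K}N^{d(n-2)}$ (with an extra $\log N$ when $n=4$), which is negligible compared with $N^{d(n-1)}$. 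With this verification in place the constant $A_{n,K}$ emerges as the combinatorial count of admissible unit contributions at the leading order, and the corollary follows at once.
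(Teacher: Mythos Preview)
Your proposal is correct and follows exactly the paper's route: the paper's entire proof is the single line ``Lemma 4 and (5) yield the following,'' and your first two steps (read (5) as an asymptotic equivalence with constant $A_{n,K}$, then apply Lemma 4 at $h=(1,\dots,1)$) are precisely that. Your inclusion--exclusion digression is unnecessary, however: the left-hand side of (5) is already a straight sum $\sum_\nu T_{n,N}(\nu)$ with no overcounting to correct, so additivity is automatic and the $\rho_2$ bound plays no role here.
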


\begin{proof}
	(Theorem 3) Let $n\ge3$; write\begin{align*}
		\underset{|N_{K/\Q}\nu|\le C_K' N^d}{\sum_{\nu\in\mathcal{O}_K}}T_{n,N}(\nu)&=\underset{1<|N_{K/\Q}\nu|\le C_K' N^d}{\sum_{\nu\in\mathcal{O}_K}}T_{n,N}(\nu)+\underset{|N_{K/\Q}\nu|=1 }{\sum_{\nu\in\mathcal{O}_K}}T_{n,N}(\nu)+T_{n,N}(0).
	\end{align*}Now, $T_{n,N}(0)\sim (2N)^{d(n-1)}$; by Lemma 3 and Corollary 2$$\lim_{N\rightarrow+\infty}\underset{|N_{K/\Q}\nu|\le C_K' N^d}{\sum_{\nu\in\mathcal{O}_K}}\frac{T_{n,N}(\nu)}{N^{d(n-1)}}= 2^{d(n-1)}D_{n,K}\cdot\left( \frac{C_KC_K'}{h_K}\right)^{n-1} +A_{n,K}k_{n,d}+2^{d(n-1)}.$$The theorem follows by Lemma 2. 
\end{proof}
\subsection{Proof of Theorem 1, part 2}
\subsubsection{Large sieve inequality for number fields}

Let $\alpha\in\Q^n/\Z^n$ and let $c(a)\in\Co$ for all $a$ lattice vector in $\Z^n$. Define$$S(\alpha)=\sum_{H(a)\le N}c(a)e(a\cdot\alpha),$$where the sum runs over $a\in\Z^n$ of \textit{height} $H(a)\le N$, where $H(a)$ is the maximum of the absolute values of the components of $a$. We use the standard notation $e(x)=e^{2\pi ix}$. Let $\ord(\alpha)=\min\{m\in\N:m\alpha\in\Z^n\}$. The following is the multidimensional analogue of the Bombieri-Davenport inequality:$$\sum_{\ord(\alpha)\le x}|S(\alpha)|^2\ll_n(N^s+x^{2s})\sum_{H(a)\le N}|c(a)|^2.$$We want a similar estimate for algebraic number fields. Specifically, let $\mathfrak{a}$ be an integral ideal of $K$, and let $\sigma$ be an additive character of $\mathcal{O}_K^n$ mod $\mathfrak{a}$. We call $\sigma$ \textit{proper} if it is not a character mod $\mathfrak{b}$ for any $\mathfrak{b}|\mathfrak{a}$. Let $c(\xi)\in\Co$ for all $\xi=(\xi_1,\dots,\xi_n)\in\mathcal{O}_K^n$. As before, define$$S(\sigma)=\sum_{H(\xi)\le N}c(\xi)\sigma(\xi),$$where $H(\xi)=\max_{i=1}^{n}\h(\xi_i)$.
\begin{prop}
	One has$$\sum_{N_{K/\Q}\mathfrak{a}\le x}\sum_{\sigma}|S(\sigma)|^2\ll_{n,K}(N^{nd}+c_Kx^{2n})\sum_{H(\xi)\le N}|c(\xi)|^2$$for some constant $c_K$, where the second sum is over the proper additive characters $\modulus \mathfrak{a}$.
\end{prop}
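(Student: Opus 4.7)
The plan is to reduce the $n$-variable large sieve to $n$ iterated applications of the one-variable case, exploiting the tensor-product structure of additive characters on $\mathcal{O}_K^n$. The key ingredient is the one-variable large sieve over $K$:
$$\sum_{N_{K/\Q}\mathfrak{a}\le x}\;\sum^{*}_{\tau\bmod\mathfrak{a}}\biggl|\sum_{\h(\xi)\le N}c(\xi)\tau(\xi)\biggr|^2\ll_K (N^d+c_Kx^2)\sum_{\h(\xi)\le N}|c(\xi)|^2,$$
which I would prove by parameterizing characters via the trace pairing as $\tau(\xi)=e(\mathrm{Tr}_{K/\Q}(\beta\xi))$ with $\beta\in\mathfrak{d}^{-1}\mathfrak{a}^{-1}/\mathfrak{d}^{-1}$, where $\mathfrak{d}$ is the different of $K/\Q$. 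Two distinct proper characters with conductors of norm $\le x$ correspond to $\beta$-values whose difference is a nonzero element of $\mathfrak{d}^{-1}\lcm(\mathfrak{a},\mathfrak{a}')^{-1}/\mathfrak{d}^{-1}$, a group of order $\le x^2$; this produces a Schaal--Huxley type spacing of order $c_K/x^2$ in the torus $K_\R/\mathfrak{d}^{-1}$, from which the one-variable inequality follows via an analytic sieve inequality in $K_\R$.

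For the full statement, every additive character $\sigma$ of $\mathcal{O}_K^n$ decomposes uniquely as $\sigma=\sigma_1\otimes\cdots\otimes\sigma_n$, where each $\sigma_i$ is an additive character of $\mathcal{O}_K$ of conductor $\mathfrak{a}_i$; the standard Pontryagin-duality computation shows that the conductor of $\sigma$ equals $\lcm(\mathfrak{a}_1,\dots,\mathfrak{a}_n)$, so $\sigma$ is proper mod $\mathfrak{a}$ iff this lcm equals $\mathfrak{a}$. In particular each $\mathfrak{a}_i\mid\mathfrak{a}$, hence $N_{K/\Q}\mathfrak{a}_i\le x$. Dropping the lcm constraint yields the over-count
$$\sum_{N_{K/\Q}\mathfrak{a}\le x}\;\sum^{*}_{\sigma}|S(\sigma)|^2\;\le\;\sum_{\substack{\mathfrak{a}_1,\dots,\mathfrak{a}_n\\ N_{K/\Q}\mathfrak{a}_i\le x}}\;\sum^{*}_{\sigma_1\bmod\mathfrak{a}_1}\cdots\sum^{*}_{\sigma_n\bmod\mathfrak{a}_n}\bigl|S(\sigma_1\otimes\cdots\otimes\sigma_n)\bigr|^2.$$

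To the right-hand side I would apply the one-variable estimate iteratively. Setting $c_1(\xi_1)=\sum_{\xi_2,\dots,\xi_n}c(\xi)\prod_{i\ge 2}\sigma_i(\xi_i)$, the innermost sum over $(\mathfrak{a}_1,\sigma_1)$ is bounded by $(N^d+c_Kx^2)\sum_{\xi_1}|c_1(\xi_1)|^2$ by Step~1. For fixed $\xi_1$ the inner object $c_1(\xi_1)$ is again a sum over $\xi_2$ of a quantity depending on $\sigma_3,\dots,\sigma_n$, so the same peel-off argument applies to $\sigma_2$, then to $\sigma_3$, and so on. After $n$ iterations one obtains
$$\sum_{N_{K/\Q}\mathfrak{a}\le x}\sum^{*}_\sigma|S(\sigma)|^2\;\ll_{n,K}\;(N^d+c_Kx^2)^n\sum_{H(\xi)\le N}|c(\xi)|^2,$$
and since $(N^d+c_Kx^2)^n\ll_n N^{nd}+c_K^n x^{2n}$, absorbing $c_K^n$ into the $K$-dependence of the implicit constant yields the stated bound.

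The main obstacle is the one-variable base case: a direct application of the $d$-dimensional Gallagher inequality to the characters viewed merely as $\ell^\infty$-spaced points in $(\R/\Z)^d$ would only yield $(N+c_K^{-1}x^2)^d\asymp N^d+x^{2d}$, one factor of $x^2$ per $\R$-dimension, which after iteration would produce only the much weaker $N^{nd}+x^{2nd}$. Obtaining the sharp factor $x^2$ per $K$-dimension requires a sieve inequality over $K_\R$ that genuinely exploits the subgroup structure of the characters (equivalently, the covolume of the lattice $\mathfrak{d}^{-1}\mathfrak{a}^{-1}$ is of order $(N_{K/\Q}\mathfrak{a})^{-1}|\Delta_K|^{-1/2}$, not $(N_{K/\Q}\mathfrak{a})^{-d}$), along the lines of Schaal's number-field large sieve.
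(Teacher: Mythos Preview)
Your proposal is correct and follows precisely the route the paper indicates: the paper does not give a proof but refers to Huxley \cite{Hu} for the one-variable case over $K$ and states that the $n$-variable version ``can be achieved as for the case $K=\Q$'', which is exactly the tensor-decomposition-and-iterate argument you carry out. Your write-up in fact supplies the details the paper omits, including the correct identification of the conductor of $\sigma_1\otimes\cdots\otimes\sigma_n$ as $\lcm(\mathfrak{a}_1,\dots,\mathfrak{a}_n)$ and the peel-off induction; the final remark explaining why a naive $d$-fold Gallagher bound would lose a factor $x^{2(d-1)}$ per $K$-variable is a useful clarification of why Huxley's genuinely number-field inequality is needed as the base case.
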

A more precise statement of this result can be found in \cite{Hu}, Theorem 2 for the 1-dimensional case. Proposition 2 is the multidimensional analogue which can be achieved as for the case $K=\Q$; for further details see \cite{Hu}, again.\\

Let now $\wp$ be a prime ideal of $\mathcal{O}_K$ and let $\Omega(\wp)$ be a subset of $\mathcal{O}_K^n/\wp\mathcal{O}_K^n$, whom order is $\nu(\wp)$, say. For each $\xi\in\mathcal{O}_K^n$, set$$P(\xi,x)=|\{\wp\in\mathcal{O}_K: N_{K/\Q}\wp\le x,\ \xi\mbox{ mod }\wp\in\Omega(\wp)\}|$$and$$P(x)=\sum_{N_{K/\Q}\wp\le x}\frac{\nu(\wp)}{q_{\wp}^n},$$where $q_{\wp}=N_{K/\Q}\wp$.

The next results are classical applications of Proposition 2. We include the proofs for completeness.
\begin{lemma}
	If $N\gg_K x^{2/d}$, then$$\sum_{H(\xi)\le N}(P(\xi,x)-P(x))\ll_{n,K}N^{nd}P(x).$$
\end{lemma}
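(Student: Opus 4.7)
The plan is to exploit the mean-zero structure of the local deviation on each prime by Fourier-decomposing over the additive characters of $(\mathcal{O}_K/\wp)^n$, apply Cauchy-Schwarz, and invoke Proposition 2 for the large-sieve side of the resulting inequality.

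Writing $q_\wp = N_{K/\Q}\wp$ and setting $f_\wp(\xi) := \mathbf{1}_{\Omega(\wp)}(\xi \bmod \wp) - \nu(\wp)/q_\wp^n$, we have
$$P(\xi,x) - P(x) = \sum_{N_{K/\Q}\wp \le x} f_\wp(\xi),$$
and each $f_\wp$ is a mean-zero function on the finite abelian group $(\mathcal{O}_K/\wp)^n$. For each such $\wp$, I would expand $f_\wp$ in the orthonormal basis of additive characters,
$$f_\wp(\xi) = \sum_{\sigma \neq 1} c_\sigma(\wp)\,\sigma(\xi),$$
where the sum ranges over nontrivial additive characters mod $\wp$ (each of which is automatically proper, since $\wp$ is prime). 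Interchanging summations gives
$$\sum_{H(\xi) \le N}(P(\xi,x) - P(x)) = \sum_{N_{K/\Q}\wp \le x}\ \sum_{\sigma \ne 1} c_\sigma(\wp)\,S(\sigma),$$
where $S(\sigma) = \sum_{H(\xi) \le N} \sigma(\xi)$ is exactly the character sum of Proposition 2 taken with $c(\xi) \equiv 1$ on the box $H(\xi) \le N$.

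Next, I would apply Cauchy-Schwarz over the pairs $(\wp,\sigma)$ to split the right-hand side into a "variance" factor and a "large-sieve" factor. For the variance factor, Parseval on $(\mathcal{O}_K/\wp)^n$ yields
$$\sum_{\sigma \ne 1} |c_\sigma(\wp)|^2 = \frac{1}{q_\wp^n}\sum_{\xi \in (\mathcal{O}_K/\wp)^n} |f_\wp(\xi)|^2 = \frac{\nu(\wp)}{q_\wp^n}\left(1-\frac{\nu(\wp)}{q_\wp^n}\right) \le \frac{\nu(\wp)}{q_\wp^n},$$
so summing over $\wp$ with $N_{K/\Q}\wp \le x$ bounds the variance factor by $P(x)$. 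For the large-sieve factor, Proposition 2 with $c(\xi) = \mathbf{1}_{H(\xi)\le N}$ and ideals $\mathfrak{a}$ restricted to primes of norm $\le x$ gives
$$\sum_{N_{K/\Q}\wp \le x}\ \sum_{\sigma} |S(\sigma)|^2 \ll_{n,K} (N^{nd} + c_K x^{2n})\,(2N+1)^{nd},$$
and the hypothesis $N \gg_K x^{2/d}$ is exactly what absorbs the $c_K x^{2n}$ term into $N^{nd}$, reducing the right-hand side to $\ll_{n,K} N^{2nd}$.

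Combining the two factors yields
$$\bigg|\sum_{H(\xi) \le N}(P(\xi,x) - P(x))\bigg|^2 \ll_{n,K} P(x)\,N^{2nd},$$
from which the stated bound $\ll_{n,K} N^{nd} P(x)$ follows in the regime of interest $P(x)\ge 1$ (and is vacuous otherwise, since $P(x)=0$ forces no primes to contribute and the left-hand side vanishes identically). The heart of the argument is the large-sieve bound of Proposition 2; the delicate step is organizing the Fourier weights so that Parseval produces exactly $P(x)$ in the first Cauchy-Schwarz factor, while the threshold $N \gg_K x^{2/d}$ is precisely calibrated to make the second factor linear in $N^{2nd}$.
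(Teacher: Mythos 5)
Your argument is internally sound as far as it goes, but it proves a weaker statement than the one this lemma is actually carrying, and it has a loose end even on its own terms. The loose end: after Cauchy--Schwarz you obtain $\bigl|\sum_{H(\xi)\le N}(P(\xi,x)-P(x))\bigr|\ll_{n,K}P(x)^{1/2}N^{nd}$ and pass to $P(x)N^{nd}$ by assuming $P(x)\ge1$; the excluded case is not only $P(x)=0$ but also $0<P(x)<1$ (e.g.\ a single relevant prime with $\nu(\wp)=1$ and $q_\wp^n$ large), where $P(x)^{1/2}>P(x)$ and your final step fails. Since the lemma is stated for arbitrary collections $\Omega(\wp)$, that case needs a separate word (it does not arise in the application, where $P(x)\asymp\pi_K(x)$ is large, but then that hypothesis should be stated).

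The substantive difference: the paper's proof bounds the second moment $\sum_{H(\xi)\le N}(P(\xi,x)-P(x))^2$, not the signed first moment; the display in the lemma is evidently missing the square, as the proof given there and its use in Proposition 3 (which needs $\sum_f(\pi_{f,r}(x)-\delta(r)\pi_K(x))^2\ll N^{nd}\pi_K(x)$) make clear. The paper reaches this by a duality trick: writing $R(\xi,x)=P(\xi,x)-P(x)$, it applies Cauchy--Schwarz and Proposition 2 with the self-referential weights $c(\xi)=R(\xi,x)$, giving $\sum_\xi R(\xi,x)^2\le P(x)^{1/2}\bigl((N^{nd}+c_Kx^{2n})\sum_\xi R(\xi,x)^2\bigr)^{1/2}$, and then divides by $\bigl(\sum_\xi R(\xi,x)^2\bigr)^{1/2}$ to conclude $\sum_\xi R(\xi,x)^2\ll_{n,K}N^{nd}P(x)$ for $N\gg_K x^{2/d}$. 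Your choice $c(\xi)\equiv1$ exploits cancellation among the $\xi$ and can only ever control $\sum_\xi R(\xi,x)$; it cannot be upgraded to the variance bound needed downstream. Your Parseval computation $\sum_{\sigma\ne1}|c_\sigma(\wp)|^2\le\nu(\wp)/q_\wp^n$ and your restriction of Proposition 2 to prime moduli are both correct and reusable; the fix is to rerun the Cauchy--Schwarz step with $c(\xi)=R(\xi,x)$ so as to recover the mean-square statement the paper actually proves and uses.
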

\begin{proof}
	Let $\varphi_\wp$ be the characteristic function of the set $\Omega(\wp)$, that is$$\varphi_\wp(\xi)=\begin{cases}
		1&\mbox{if }\xi\mbox{ mod }\wp\in\Omega(\wp)\\
		0&\mbox{otherwise}.
	\end{cases}$$It is periodic function mod $\wp$. Its Fourier transform is$$\widehat{\varphi_\wp}(\sigma)=\frac{1}{q_\wp^n}\sum_{\xi\tiny\mbox{ mod }\wp}\varphi_\wp(\xi)\overline{\sigma(\xi)}.$$By the inversion formula we get$$\varphi_\wp(\xi)=\sum_{\sigma\tiny\mbox{ mod }\wp}\widehat{\varphi_\wp}(\sigma)\sigma(\xi),$$where the sum is over the characters mod $\wp$. In particular\begin{equation}
		\widehat{\varphi_\wp}(1)=\frac{\nu(\wp)}{q_\wp^n}
	\end{equation}and, by the orthogonality relations,\begin{equation}
		\sum_{\sigma\tiny\mbox{ mod }\wp}|\widehat{\varphi_\wp}(\sigma)|^2=\frac{\nu(\wp)}{q_\wp^n}
	\end{equation}From (6), we can write$$P(\xi,x)=\sum_{N_{K/\Q}\wp\le x}\varphi_\wp(\xi)=P(x)+R(\xi,x),$$where$$R(\xi,x)=\sum_{N_{K/\Q}\le x}\underset{\sigma\neq1}{\sum_{\sigma\tiny\mbox{ mod }\wp}}\widehat{\varphi_\wp}(\sigma)\sigma(\xi).$$By the Cauchy-Schwartz inequality one has\begin{align*}
		\sum_{H(\xi)\le N}(R(\xi,x))^2&=\sum_{N_{K/\Q}\wp\le x}\underset{\sigma\neq1}{\sum_{\sigma\tiny\mbox{ mod }\wp}}\widehat{\varphi_\wp}(\sigma)\sum_{H(\xi)\le N}R(\xi,x)\sigma(\xi)\\
		&\le\left( \sum_{N_{K/\Q}\wp\le x}\underset{\sigma\neq1}{\sum_{\sigma\tiny\mbox{ mod }\wp}}|\widehat{\varphi_\wp}(\sigma)|^2\right)^{1/2}\cdot\left( \sum_{\sigma}|S(\sigma)|^2\right)^{1/2}, 
	\end{align*}where the last sum is over $\sigma\neq 1$ mod $\wp$ for some $\wp$ of norm $\le x$, and$$S(\sigma)=\sum_{H(\xi)\le N}R(\xi,x)\sigma(\xi).$$From (7) and Proposition 2 we get$$\sum_{H(\xi)\le N}(R(\xi,x))^2\ll\left(P(x)(N^{nd}+c_Kx^{2n})\sum_{H(\xi)\le N}|R(\xi,x)|^2\right)^{1/2},$$which, for $N\gg x^{2/d}$, implies the lemma.
\end{proof}	
Define, for a collection of subsets $\Omega(\wp)$ for each prime $\wp$, $$E(N)=|\{\xi\in\mathcal{O}_K^n:H(\xi)\le N,\ \xi\mbox{ mod }\wp\notin\Omega(\wp)\ \forall\wp\}|.$$Put$$\mathcal{S}(x)=\sum_{N_{K/\Q}\mathfrak{a}\le x}\mu^2(\mathfrak{a})\prod_{\wp|\mathfrak{a}}\frac{\nu(\wp)}{q_\wp^n-\nu(\wp)},$$where $\mu$ is the M\"{o}bius function; in particular$$\mu^2(\mathfrak{a})=\begin{cases}
	1&\mbox{if }\mathfrak{a}\mbox{ is square-free}\\
	0&\mbox{otherwise}.
\end{cases}$$	
\begin{lemma}
	If $N\gg_K x^{2/d}$, then$$E(N)\ll_{n,K}N^{nd}\mathcal{S}(x)^{-1}.$$
\end{lemma}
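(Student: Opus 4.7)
The plan is to deduce the arithmetic-form sieve bound of Lemma 6 from the analytic-form estimate of Proposition 2, following the classical Montgomery--Vaughan reduction transposed to the number field setting.

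First I would apply Proposition 2 with coefficients $c(\xi)$ equal to the characteristic function of the sifted set $Z=\{\xi\in\mathcal{O}_K^n:H(\xi)\le N,\ \xi\bmod\wp\notin\Omega(\wp)\ \forall\wp\}$, so that $\sum_{H(\xi)\le N}|c(\xi)|^2=E(N)$. This yields
$$\sum_{N_{K/\Q}\mathfrak{a}\le x}\sum_{\sigma\text{ proper mod }\mathfrak{a}}|S(\sigma)|^2\ll_{n,K}(N^{nd}+c_Kx^{2n})\,E(N),$$
and the hypothesis $N\gg_K x^{2/d}$ forces $x^{2n}\ll N^{nd}$, so the right-hand side simplifies to $\ll_{n,K}N^{nd}E(N)$.

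The second step is to match this from below by $E(N)^2\,\mathcal{S}(x)$. Fix a squarefree $\mathfrak{a}$ with $N_{K/\Q}\mathfrak{a}\le x$ and let $A(\zeta,\mathfrak{a})$ count the sifted $\xi$ with $H(\xi)\le N$ lying in the residue class $\zeta\in(\mathcal{O}_K/\mathfrak{a})^n$. By the Chinese remainder theorem, $A(\zeta,\mathfrak{a})$ is supported on a set of cardinality $\prod_{\wp\mid\mathfrak{a}}(q_\wp^n-\nu(\wp))$, so Cauchy--Schwartz yields $E(N)^2\le\prod_{\wp\mid\mathfrak{a}}(q_\wp^n-\nu(\wp))\sum_\zeta A(\zeta,\mathfrak{a})^2$. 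Parseval for the additive characters of $(\mathcal{O}_K/\mathfrak{a})^n$ converts $\sum_\zeta A(\zeta,\mathfrak{a})^2$ into $(N_{K/\Q}\mathfrak{a})^{-n}\sum_{\sigma\bmod\mathfrak{a}}|S(\sigma)|^2$, and decomposing characters by conductor $\mathfrak{b}\mid\mathfrak{a}$ gives, for each such $\mathfrak{a}$,
$$\sum_{\mathfrak{b}\mid\mathfrak{a}}\,\sum_{\sigma\text{ proper mod }\mathfrak{b}}|S(\sigma)|^2\ge E(N)^2\sum_{\mathfrak{b}\mid\mathfrak{a}}\mu^2(\mathfrak{b})\prod_{\wp\mid\mathfrak{b}}\frac{\nu(\wp)}{q_\wp^n-\nu(\wp)}.$$

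The main obstacle is the combinatorial rearrangement that converts this family of inequalities (one per squarefree $\mathfrak{a}$) into the single lower bound $\sum_{N_{K/\Q}\mathfrak{b}\le x}\sum_{\sigma^*\bmod\mathfrak{b}}|S(\sigma^*)|^2\gg_{n,K}E(N)^2\,\mathcal{S}(x)$: one must sum the inequality across $\mathfrak{a}$ with suitable multiplicative weights so that each proper character mod $\mathfrak{b}$ is counted with the right multiplicity while the arithmetic weights on the right-hand side reassemble $\mathcal{S}(x)$. Once this lower bound is in place, comparing it with the upper bound from the first step gives $\mathcal{S}(x)E(N)^2\ll_{n,K}N^{nd}E(N)$, and dividing by $E(N)$ yields $E(N)\ll_{n,K}N^{nd}\mathcal{S}(x)^{-1}$, as desired.
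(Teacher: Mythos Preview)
Your overall architecture matches the paper: take $c(\xi)$ to be the indicator of the sifted set so that $E(N)=S(1)=\sum|c(\xi)|^2$, apply Proposition~2 to get $\sum_{N_{K/\Q}\mathfrak{a}\le x}\sum_{\sigma}|S(\sigma)|^2\ll_{n,K}N^{nd}E(N)$ under $N\gg_K x^{2/d}$, and seek a matching lower bound $E(N)^2\mathcal{S}(x)$ for the left-hand side.

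Where you diverge from the paper is exactly at the step you flag as ``the main obstacle''. You derive, for each squarefree $\mathfrak{a}$, an inequality involving the sum over \emph{all} characters modulo $\mathfrak{a}$ (equivalently, over all conductors $\mathfrak{b}\mid\mathfrak{a}$), and then face the problem of reweighting across $\mathfrak{a}$ so as to recover $\mathcal{S}(x)$ without overcounting characters. You do not carry this out, so as written the argument is incomplete. The paper sidesteps this difficulty entirely: it proves the sharper per-modulus inequality
\[
\sum_{\substack{\sigma\ \mathrm{proper}\\ \mathrm{mod}\ \mathfrak{a}}}|S(\sigma)|^2\ \ge\ |S(1)|^2\prod_{\wp\mid\mathfrak{a}}\frac{\nu(\wp)}{q_\wp^n-\nu(\wp)}
\]
directly, by induction on the number of prime factors of $\mathfrak{a}$. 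The base case $\mathfrak{a}=\wp$ is your Cauchy--Schwarz/Parseval computation specialised to a prime (subtracting off the trivial character); the inductive step writes $\mathfrak{a}=\wp\mathfrak{b}$, uses the Chinese remainder theorem to factor proper characters as $\sigma\cdot\sigma_1$, and applies the prime case with $c(\xi)$ replaced by $c(\xi)\sigma_1(\xi)$. Because this inequality already isolates the \emph{proper} characters modulo each $\mathfrak{a}$, one simply sums it over squarefree $\mathfrak{a}$ of norm at most $x$ and obtains $E(N)^2\mathcal{S}(x)$ on the right with no rearrangement needed.

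In short: your Montgomery-style route is not wrong in spirit, but the combinatorial step you leave open is genuinely nontrivial, and the paper's inductive proof of the proper-character lower bound is precisely the device that renders it unnecessary.
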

\begin{proof}
	Let$$c(\xi)=\begin{cases}
		1&\mbox{if }\xi\mbox{ mod }\wp\notin\Omega(\wp)\ \forall\wp\\
		0&\mbox{otherwise}
	\end{cases}$$for all $\xi\in\mathcal{O}_K^n$.
	Note that$$E(N)=\sum_{H(\xi)\le N}|c(\xi)|^2=S(1).$$If we show that\begin{equation}
		\sum_{\sigma\tiny\mbox{ mod }\mathfrak{a}}|S(\sigma)|^2\ge|S(1)|^2\prod_{\wp|\mathfrak{a}}\frac{\nu(\wp)}{q_\wp^n-\nu(\wp)}
	\end{equation}for all square-free $\mathfrak{a}$, then we have, by Proposition 2, for $N\gg_K x^{2/d}$,\begin{align*}
		E(N)^2\mathcal{S}(x)&\le\sum_{N_{K/\Q}\mathfrak{a}\le x}\mu^2(\mathfrak{a})\sum_{\sigma\tiny\mbox{ mod }\mathfrak{a}}|S(\sigma)|^2\\
		&\ll(N^{nd}+c_Kx^{2n})\sum_{H(\xi)\le N}|c(\xi)|^2\\
		&\ll N^{nd}E(N),
	\end{align*}and the lemma follows.\\
	\underline{Proof of (8)}: for every prime $\wp$, by orthogonality we have\begin{equation}
		\sum_{\sigma\tiny\mbox{ mod }\wp}|S(\sigma)|^2=q_\wp^n\sum_{\zeta\in\mathcal{O}_K^n/\wp\mathcal{O}_K^n}|S(\zeta,\wp)|^2-|S(1)|^2
	\end{equation}where $S(\zeta,\wp)=\underset{\xi\tiny\mbox{ mod }\wp\in\zeta}{\sum_{\xi}}c(\xi)$. By the Cauchy-Schwartz inequality,\begin{equation}
		|S(1)|^2=\Big| \sum_{\zeta}S(\zeta,\wp)\Big|  ^2\le(q_\wp-\nu(\wp))\sum_{\zeta}|S(\zeta,\wp)|^2
	\end{equation}since $S(\zeta,\wp)=0$ for all $\zeta\in\Omega(\wp)$. Equations (9) and (10) imply (8) for the case $\mathfrak{a}=\wp$ prime ideal.
	
	More generally, if $\sigma_1$ is a character mod $\wp$, one has$$\sum_{\sigma\tiny\mbox{ mod }\wp}|S(\sigma\cdot\sigma_1)|^2\ge|S(\sigma_1)|^2\frac{\nu(\wp)}{q_\wp^n-\nu(\wp)}$$by replacing $c(\xi)$ with $c(\xi)\sigma_1(\xi)$.
	
	Let now $\mathfrak{a}$ be square-free. By the unique factorization of ideals we can write $\mathfrak{a}=\wp\mathfrak{b}$ for some prime ideal $\wp$ and for some square-free ideal $\mathfrak{b}$, with $\wp\nmid\mathfrak{b}$. The chinese remainder theorem gives,\begin{align*}
		\sum_{\sigma\tiny\mbox{ mod }\mathfrak{a}}|S(\sigma)|^2&=\sum_{\sigma\tiny\mbox{ mod }\wp}\ \sum_{\sigma_1\tiny\mbox{ mod }\mathfrak{b}}|S(\sigma\cdot\sigma_1)|^2\\
		&\ge \frac{\nu(\wp)}{q_\wp^n-\nu(\wp)}\sum_{\sigma_1\tiny\mbox{ mod }\mathfrak{b}}|S(\sigma_1)|^2.
	\end{align*}We conclude by induction on the number of prime factors of $\mathfrak{a}$. 
\end{proof}

\subsubsection{Sieving polynomials in $\mathscr{P}_{n,N}$}

Let $f\in\mathscr{P}_{n,N}$, $f(X)=X^n+\alpha_{n-1}X^{n-1}+\dots+\alpha_0$. We identify $f$ with the lattice vector $\xi=\xi_f=(\alpha_{n-1},\dots,\alpha_0)$ formed by its coefficients, so that $H(\xi_f)=\h(f)$. Similarly, polynomials mod $\wp$ are identified with lattice vectors mod $\wp$.
\begin{prop}
	Let $r$ be a splitting type. If $N\gg_K x^{2/d}$, then$$\sum_{f\in\mathscr{P}_{n,N}}(\pi_{f,r}(x)-\delta(r)\pi_K(x))^2\ll_{n,K}N^{nd}\pi_K(x).$$
\end{prop}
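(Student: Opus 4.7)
The plan is to recast the sum as a second-moment estimate controlled directly by Lemma 5. For each prime ideal $\wp$ of $\mathcal{O}_K$, let $\Omega(\wp) \subseteq (\mathcal{O}_K/\wp)^n$ be the set of coefficient vectors $(\bar{\alpha}_{n-1},\dots,\bar{\alpha}_0)$ of monic degree-$n$ polynomials over $\mathbb{F}_{q_\wp}$ whose factorization type is $r$, and set $\nu(\wp) = |\Omega(\wp)|$. Under the identification $f \leftrightarrow \xi_f = (\alpha_{n-1},\dots,\alpha_0)$, we have $\pi_{f,r}(x) = P(\xi_f, x)$ in the notation of Section 2.2.1, so our task is to bound the second moment of $P(\xi,x)$ over $\xi$ of height $\le N$, centered at $\delta(r)\pi_K(x)$ rather than at the natural mean $P(x)$.

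First I would control $P(x) = \sum_{N_{K/\Q}\wp \le x} \nu(\wp)/q_\wp^n$. A standard generating-function count of monic polynomials over $\mathbb{F}_q$ with a prescribed cycle type (via the cycle index of $S_n$) gives $\nu(\wp) = \delta(r)\, q_\wp^n + O_n(q_\wp^{n-1})$, whence $\nu(\wp)/q_\wp^n = \delta(r) + O_n(q_\wp^{-1})$. Summing and invoking a Mertens-type estimate for $K$ yields
\[ P(x) = \delta(r)\, \pi_K(x) + O_{n,K}(\log\log x). \]

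Next I would invoke Lemma 5, which — as its Cauchy--Schwarz argument combined with Proposition 2 actually establishes — gives the second-moment bound: for $N \gg_K x^{2/d}$,
\[ \sum_{H(\xi)\le N} \bigl(P(\xi,x) - P(x)\bigr)^2 \ll_{n,K} N^{nd}\, P(x) \ll_{n,K} N^{nd}\, \pi_K(x). \]
Writing
\[ \pi_{f,r}(x) - \delta(r)\pi_K(x) = \bigl(P(\xi_f,x) - P(x)\bigr) + \bigl(P(x) - \delta(r)\pi_K(x)\bigr), \]
expanding via $(a+b)^2 \le 2a^2 + 2b^2$, and using $|\mathscr{P}_{n,N}| \ll N^{nd}$ to bound the contribution of the $f$-independent term gives
\[ \sum_{f \in \mathscr{P}_{n,N}} \bigl(\pi_{f,r}(x) - \delta(r)\pi_K(x)\bigr)^2 \ll_{n,K} N^{nd}\pi_K(x) + N^{nd}(\log\log x)^2 \ll_{n,K} N^{nd}\pi_K(x), \]
since $(\log\log x)^2 = o(\pi_K(x))$.

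The main obstacle is essentially bookkeeping rather than a new idea. One must (i) verify the uniform finite-field count $\nu(\wp)/q_\wp^n = \delta(r) + O_n(q_\wp^{-1})$, which is the cycle-index/random-polynomial heuristic made precise; (ii) handle the finitely many small primes and the primes of bad reduction for individual $f$, where the factorization type of $f \bmod \wp$ requires interpretation — but such primes contribute at most $O_n(1)$ to each $\pi_{f,r}(x)$ and hence $O_{n,K}(N^{nd})$ to the total, which is absorbed; and (iii) note that Lemma 5 as printed has a missing square, but its proof produces precisely the $L^2$-bound one needs. Once these points are in place the proposition is a direct consequence of the large-sieve machinery developed in Section 2.2.1.
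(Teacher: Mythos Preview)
Your proposal is correct and follows essentially the same route as the paper: define $\Omega(\wp)$ as the set of monic degree-$n$ polynomials over $\mathbb{F}_{q_\wp}$ with splitting type $r$, use the count $\nu(\wp)=\delta(r)q_\wp^n+O(q_\wp^{n-1})$ to get $P(x)=\delta(r)\pi_K(x)+O(\log\log x)$, identify $\pi_{f,r}(x)=P(\xi_f,x)$, and conclude via Lemma~5. In fact you are more explicit than the paper, which simply says ``the proposition thus follows by Lemma~5''; your splitting $\pi_{f,r}(x)-\delta(r)\pi_K(x)=(P(\xi_f,x)-P(x))+(P(x)-\delta(r)\pi_K(x))$ and the observation that Lemma~5 should carry a square on the summand make precise what the paper leaves implicit.
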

\begin{proof}
	For every prime $\wp$ of $K$ of norm $q_{\wp}$, let
	\begin{align*}
		X_{n,r,\wp}=\Big\{&\Big( \prod_{i=1}^{r_1}g_i^{(1)}\Big) \dots\Big(  \prod_{i=1}^{r_n}g_i^{(n)}\Big):g_i^{(j)}\in\mathbb{F}_{q_{\wp}}[X]\ \mbox{irreducile, monic},\\
		&\deg(g_i^{(j)})=j,\ g_i^{(j)}\neq g_k^{(j)}\mbox{ if }i\neq k\Big\}.
	\end{align*}Namely, $\Omega(\wp)=\Omega_r(\wp):=X_{n,r,\wp}$ is the set of polynomials of (square-free) splitting type $r$ in the finite field $\mathbb{F}_{q_{\wp}}$. It is not hard to see that$$\nu_r(\wp)=|X_{n,r,\wp}|=\delta(r)q_\wp^n+O(q_\wp^{n-1})$$(see for instance \cite{Vi}, Proposition 1). Therefore $$P(x)=\sum_{N_{k/\Q}\wp\le x}\frac{\nu_r(\wp)}{q_\wp^n}=\delta(r)\pi_K(x)+O(\log\log x),$$and $\pi_{f,r}(x)=P(\xi_f,x)$. The proposition thus follows by Lemma 5.
\end{proof}

For any $f\in\mathcal{O}_K[X]$, let$$\pi_f(x):=\underset{\wp\tiny\mbox{ unramified}}{\sum_{q_\wp\le x}}|\{\alpha\in\mathcal{O}_K:f(\alpha)\equiv0\mbox{ mod }\wp\}|.$$Observe that if $f$ is irreducible and $f(\alpha)\equiv0$ mod $\wp$ unramified, then there is a prime $\mathfrak{P}|\wp$ in the field $K(\alpha)$ so that $[\mathcal{O}_{K(\alpha)}/\mathfrak{P}:\mathcal{O}_K/\wp]=1$, i.e. $N_{K/\Q}\mathfrak{P}=q_\wp$. Therefore $\pi_f(x)$ corresponds to the prime ideal counting function $\pi_{K(\alpha)}(x)$, and the asymptotic$$\pi_f(x)\sim \pi_K(x),$$as $x\rightarrow+\infty$, holds by the Prime Ideal Theorem.
\begin{coroll}
	If $N\gg_K x^{2/d}$, then$$\sum_{f\in\mathscr{P}_{n,N}}(\pi_f(x)-\pi_K(x))^2\ll_{n,K} N^{nd}\pi_K(x).$$
\end{coroll}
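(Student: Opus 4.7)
The plan is to reduce the corollary to Proposition~3 by decomposing $\pi_f(x)$ linearly over the (finitely many) square-free splitting types of degree $n$. First I would observe that for every unramified prime $\wp$ with $\wp\nmid\mathrm{disc}(f)$, the reduction $f\bmod\wp$ has a well-defined square-free splitting type $r=(r_1,\dots,r_n)$, and it contributes exactly $r_1$ roots to $\pi_f(x)$; the remaining unramified primes divide $\mathrm{disc}(f)$, and since $|N_{K/\Q}\mathrm{disc}(f)|\ll_{n,K}N^{2d(n-1)}$ there are $\ll_{n,K}\log N$ such primes, each contributing at most $n$ roots. This yields
$$\pi_f(x)=\sum_r r_1\,\pi_{f,r}(x)+E_f(x),\qquad E_f(x)\ll_{n,K}\log N,$$
with the sum running over square-free splitting types $r$.

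To match the main term, I would invoke the identity $\sum_r r_1\delta(r)=1$, which simply expresses that the expected number of $\mathbb{F}_q$-roots of a uniformly random monic polynomial of degree $n$ equals $1$. Subtracting $\pi_K(x)=\sum_r r_1\delta(r)\pi_K(x)$ and applying Cauchy--Schwarz on the finite $r$-index set (of size at most $p(n)$, with $r_1\le n$) gives
$$(\pi_f(x)-\pi_K(x))^2\ll_n\sum_r\bigl(\pi_{f,r}(x)-\delta(r)\pi_K(x)\bigr)^2+(\log N)^2.$$
Summing over $f\in\mathscr{P}_{n,N}$ and applying Proposition~3 to each of the $O_n(1)$ inner sums then yields
$$\sum_f(\pi_f(x)-\pi_K(x))^2\ll_{n,K}N^{nd}\pi_K(x)+N^{nd}(\log N)^2,$$
and the conclusion follows in any regime where $\pi_K(x)\gg(\log N)^2$.

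The main technical point, and the candidate for the main obstacle, is making the error $\sum_f E_f(x)^2$ harmless uniformly across the whole range $N\gg_K x^{2/d}$, because the pointwise bound $(\log N)^2$ is wasteful when $\pi_K(x)$ happens to be small. To obtain the clean estimate $\ll_{n,K}N^{nd}\pi_K(x)$ in every permitted range of $x$, I would complement the pointwise bound with an average argument: by equidistribution of coefficients modulo $\wp$, the proportion of $f\in\mathscr{P}_{n,N}$ for which a fixed prime $\wp$ divides $\mathrm{disc}(f)$ is $\ll 1/q_\wp$, so $\sum_f E_f(x)\ll_{n,K}N^{nd}\log\log x$, which combined with the pointwise bound is enough to absorb the error. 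Beyond this bookkeeping, the reduction to Proposition~3 is routine.
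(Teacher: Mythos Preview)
Your approach is essentially the paper's: decompose $\pi_f(x)$ as $\sum_r r_1\,\pi_{f,r}(x)$, use the identity $\sum_r r_1\delta(r)=1$, apply Cauchy--Schwarz over the finite set of splitting types, and invoke Proposition~3 termwise. The paper carries this out exactly, proving the identity $\sum_r r_1\delta(r)=1$ by a short generating-function computation rather than the ``expected number of roots'' heuristic.

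The one substantive difference is your error term $E_f(x)$. In the paper this term simply does not appear: $\pi_f(x)$ is defined as a sum over \emph{unramified} primes only, and for such $\wp$ the reduction $f\bmod\wp$ is square-free and hence has a genuine splitting type, so the decomposition $\pi_f(x)=\sum_r r_1\,\pi_{f,r}(x)$ is exact. All of your bookkeeping around $(\log N)^2$ versus $\pi_K(x)$ and the average argument over $\mathrm{disc}(f)$ is therefore unnecessary under the paper's conventions; once you observe that $E_f(x)=0$, the proof collapses to the three lines the paper gives.
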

\begin{proof}
	Write\begin{align*}
		\pi_f(x)&=\sum_r r_1\pi_{f,r}(x)\\
		&=\Big( \sum_r r_1\delta(r)\Big) \pi_K(x)+\sum_r r_1(\pi_{f,r}(x)-\delta(r)\pi_K(x)).
	\end{align*}In order to compute $\sum_r r_1\delta(r)$ we consider the generating function$$\sum_{n\ge0}\Big(\underset{\sum ir_i=n}{\sum_{r_1,\dots,r_n\ge0}}r_1\delta(r)\Big) X_1^{r_1}X^{n-r_1}=\sum_{r_1,\dots,r_n\ge0}\frac{X_1^{r_1}}{r_1!}\prod_{i=2}^{n}\frac{1}{i^{r_i}r_i!}X^{2r_2}X^{3r_3}\dots.$$We have that $\sum_r r_1\delta(r)$ corresponds to the coefficients of $X^{n-1}$ of\begin{align*}
		\frac{\partial}{\partial X_1}\Big|_{X_1=X}\exp {\Big( X_1+\sum_{n\ge 2}\frac{X^n}{n}\Big) } &=\frac{\partial}{\partial X_1}\Big|_{X_1=X}\exp{\Big( X_1+\int_{0}^{X}\frac{dt}{1-t}-X\Big) }\\
		&=\frac{1}{1-X}\\
		&=1+X+X^2+\dots
	\end{align*}which is 1. It turns out, by the Cauchy-Schwartz inequality, that$$(\pi_f(x)-\pi_K(x))^2\ll\sum_{r}r_1^2(\pi_{f,r}(x)-\delta(r)\pi_K(x))^2,$$and we apply Proposition 3 for each $r$.
\end{proof}
Fix a splitting type $r$, and let$$E_r(N)=|\{f\in\mathscr{P}_{n,N}:f\mbox{ has splitting type }r\mbox{ mod }\wp\mbox{ for no prime }\wp\}|.$$\begin{coroll}
	One has$$E_r(N)\ll_{n,K}N^{d(n-1/2)}\log N.$$
\end{coroll}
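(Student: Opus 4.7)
The plan is to apply the large sieve bound of Lemma 6 to the family of subsets $\Omega(\wp) = X_{n,r,\wp}$ already introduced in the proof of Proposition 3. With this choice, a polynomial $f \in \mathscr{P}_{n,N}$, identified with its coefficient vector $\xi_f \in \mathcal{O}_K^n$, satisfies $\xi_f \bmod \wp \in \Omega(\wp)$ precisely when $f$ has splitting type $r$ mod $\wp$. Consequently $E_r(N)$ agrees with the quantity $E(N)$ defined just before Lemma 6, and the lemma yields
$$E_r(N) \ll_{n,K} \frac{N^{nd}}{\mathcal{S}(x)} \qquad \text{whenever } N \gg_K x^{2/d}.$$

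The real work is then to produce a lower bound for $\mathcal{S}(x)$. From Proposition 3 we have $\nu_r(\wp) = \delta(r) q_\wp^n + O(q_\wp^{n-1})$, so
$$\frac{\nu_r(\wp)}{q_\wp^n - \nu_r(\wp)} \longrightarrow \frac{\delta(r)}{1-\delta(r)} > 0 \qquad \text{as } q_\wp \to \infty.$$
In particular, there exists a constant $c_r > 0$ and a bound $Q_r$ such that the ratio exceeds $c_r$ for all primes $\wp$ with $q_\wp \ge Q_r$. Restricting the sum defining $\mathcal{S}(x)$ to square-free ideals $\mathfrak{a} = \wp$ that are single primes of norm at least $Q_r$, and applying the Prime Ideal Theorem, gives
$$\mathcal{S}(x) \ge c_r \underset{q_\wp \ge Q_r}{\sum_{N_{K/\Q}\wp \le x}} 1 \gg_{r,K} \pi_K(x) \sim \frac{x}{\log x}.$$

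Finally, take $x = N^{d/2}$, which makes the admissibility condition $N \gg_K x^{2/d}$ an equality up to a constant. Plugging in,
$$E_r(N) \ll_{n,K} N^{nd} \cdot \frac{\log x}{x} = N^{nd} \cdot \frac{(d/2)\log N}{N^{d/2}} \ll_{n,K} N^{d(n-1/2)} \log N,$$
which is the desired estimate. The only point requiring attention is uniformity of the lower bound $\nu_r(\wp)/(q_\wp^n - \nu_r(\wp)) \ge c_r$; this is handled trivially by excluding finitely many small primes, since their contribution to $\mathcal{S}(x)$ is $O_r(1)$ and does not affect the final order of magnitude. No other obstacle arises: the machinery of Lemma 6 is tailor-made for exactly this kind of sieving problem, and Proposition 3 provides precisely the density input it needs.
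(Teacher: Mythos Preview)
Your proof is correct, but it takes a slightly different path from the paper's own argument. The paper deduces the corollary directly from Proposition~3 (the variance estimate): for $f$ counted by $E_r(N)$ one has $\pi_{f,r}(x)=0$, so each such $f$ contributes $(\delta(r)\pi_K(x))^2$ to the left-hand side of Proposition~3, and dividing yields $E_r(N)\ll N^{nd}/\pi_K(x)$; the choice $x\asymp N^{d/2}$ finishes. You instead go through Lemma~6 (the arithmetic large sieve), bounding $\mathcal{S}(x)$ from below by restricting to single prime ideals and using the density input $\nu_r(\wp)=\delta(r)q_\wp^n+O(q_\wp^{n-1})$. Both routes rest on the same large sieve inequality (Proposition~2) and give the same final bound; the paper's route is marginally shorter here since Proposition~3 has already been established, while your route via Lemma~6 is exactly the template the paper adopts immediately afterwards for the sharper Proposition~4, where the full multiplicative structure of $\mathcal{S}(x)$ is needed to improve the exponent of $\log N$.
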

\begin{proof}
	For $f\in E_r(N)$, $\pi_{f,r}(x)=0$, so by\begin{multline*}
		\sum_{f\in\mathscr{P}_{n,N}}(\pi_{f,r}(x)-\delta(r)\pi_K(x))^2\\
		=\underset{\pi_{f,r}(x)\neq0}{\sum_{f\in\mathscr{P}_{n,N}}}(\pi_{f,r}(x)-\delta(r)\pi_K(x))^2+\sum_{f\in E_r(N)}(\delta(r)\pi_K(x))^2\\
		=\underset{\pi_{f,r}(x)\neq0}{\sum_{f\in\mathscr{P}_{n,N}}}(\pi_{f,r}(x)-\delta(r)\pi_K(x))^2+E_r(N)(\delta(r)\pi_K(x))^2
	\end{multline*}we get$$E_r(N)\asymp\underset{\pi_{f,r}(x)\neq0}{\sum_{f\in\mathscr{P}_{n,N}}}(\pi_{f,r}(x)-\delta(r)\pi_K(x))^2(\pi_K(x))^{-2}\ll_{n,K}N^{nd}(\pi_K(x))^{-1}$$for $N\gg_K x^{2/d}$. Pick $x\asymp N^{d/2}$ and conclude by the prime ideal theorem.
\end{proof}
In order to improve the exponent of $\log N$ we apply Lemma 6 to $E_R(N)$, where $R$ is a nonempty set of splitting types and $$E_R(N)=|\{f\in\mathscr{P}_{n,N}:f\mbox{ has splitting type in }R\mbox{ mod }\wp\mbox{ for no prime }\wp\}|.$$Put $\delta(R)=\sum_{r\in R}\delta(r)$.\begin{prop}
	For any $\delta<\delta(R)$, one has$$E_R(N)\ll_{n,K}N^{d(n-1/2)}(\log N)^{1-\frac{\delta}{1-\delta}}.$$
\end{prop}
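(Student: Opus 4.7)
The plan is to apply Lemma 6 to the composite set $\Omega_R(\wp) := \bigcup_{r \in R} X_{n,r,\wp}$, whose cardinality, by summing the counts from the proof of Proposition 3, is $\nu_R(\wp) = \delta(R)\,q_\wp^n + O(q_\wp^{n-1})$. Lemma 6 immediately yields $E_R(N) \ll_{n,K} N^{nd}\,\mathcal{S}(x)^{-1}$ for $N \gg_K x^{2/d}$, so the whole task reduces to a lower bound for
$$\mathcal{S}(x) = \sum_{N_{K/\Q}\mathfrak{a} \le x}\mu^2(\mathfrak{a})\prod_{\wp \mid \mathfrak{a}}\frac{\nu_R(\wp)}{q_\wp^n - \nu_R(\wp)}.$$

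Fix $\delta < \delta(R)$ and set $c := \delta/(1-\delta)$. Since
$$\frac{\nu_R(\wp)}{q_\wp^n - \nu_R(\wp)} = \frac{\delta(R)}{1-\delta(R)} + O_{n,K}(q_\wp^{-1}) > c$$
for every prime $\wp$ of sufficiently large norm $q_\wp > Q_0$, I would discard the finitely many small primes (absorbing them into the implicit constant) to obtain
$$\mathcal{S}(x) \gg \sum_{\substack{N_{K/\Q}\mathfrak{a} \le x,\ \mu^2(\mathfrak{a})=1 \\ \wp \mid \mathfrak{a} \Rightarrow q_\wp>Q_0}} c^{\omega(\mathfrak{a})},$$
where $\omega(\mathfrak{a})$ counts distinct prime divisors. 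The key input is then a Landau--Selberg--Delange estimate over $K$,
$$\sum_{N_{K/\Q}\mathfrak{a} \le x}\mu^2(\mathfrak{a})\,c^{\omega(\mathfrak{a})} \asymp_{K,c} x\,(\log x)^{c-1},$$
which follows from the factorization $\sum_{\mathfrak{a}\text{ sq-free}} c^{\omega(\mathfrak{a})}/(N_{K/\Q}\mathfrak{a})^s = \zeta_K(s)^c\,G_c(s)$, with $G_c$ holomorphic and non-vanishing on $\Re(s) > 1/2$, combined with a standard Selberg--Delange contour argument exploiting the simple pole of $\zeta_K$ at $s=1$. Only the lower bound is required, and it can also be derived elementarily from the Mertens-type identity $\sum_{q_\wp \le y} 1/q_\wp = \log\log y + O_K(1)$ by induction on $\omega$.

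Substituting $\mathcal{S}(x) \gg_{c,K} x(\log x)^{c-1}$ into Lemma 6 and choosing $x \asymp N^{d/2}$ (which satisfies the hypothesis $N \gg_K x^{2/d}$) gives
$$E_R(N) \ll_{n,K} \frac{N^{nd}}{N^{d/2}(\log N)^{c-1}} = N^{d(n-1/2)}(\log N)^{1-\frac{\delta}{1-\delta}},$$
which is exactly the claim. The main obstacle is securing the Selberg--Delange lower bound for $\sum \mu^2(\mathfrak{a})\,c^{\omega(\mathfrak{a})}$ uniformly over $K$; everything else is a direct generalization of the argument for Corollary 4, with a single splitting type $r$ replaced by the union $R$, and the saving of $(\log N)^c$ over Corollary 4 is precisely the dividend from passing from a sum over primes in $\mathcal{S}(x)$ to a sum over all square-free ideals.
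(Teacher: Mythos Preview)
Your proposal is correct and follows essentially the same route as the paper: take $\Omega_R(\wp)=\bigcup_{r\in R}X_{n,r,\wp}$, apply Lemma~6, bound $\nu_R(\wp)/(q_\wp^n-\nu_R(\wp))\ge \gamma=\delta/(1-\delta)$ for $q_\wp$ large, reduce to a lower bound for $\sum_{\mathfrak{a}}\mu^2(\mathfrak{a})\gamma^{\omega(\mathfrak{a})}$, invoke a Selberg--Delange estimate to get $\gg x(\log x)^{\gamma-1}$, and set $x\asymp N^{d/2}$. The paper cites Selberg \cite{Sel}, Theorem~2 directly for the asymptotic $\mathcal{S}_{\gamma,t}(x)\asymp C_{K,t,\gamma}\,x(\log x)^{\gamma-1}$ (with the restriction $q_\wp\ge t$ built in), while you phrase the same input as Landau--Selberg--Delange for the unrestricted sum; since removing the finitely many primes with $q_\wp\le Q_0$ only alters the Euler product by a bounded factor, the two formulations are equivalent for the purpose of the lower bound.
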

\begin{proof}
	Let $\Omega_R(\wp)=\bigcup_{r\in R}X_{n,r,\wp}$. Its order is$$\nu_{R}(\wp)=\delta(R)q_{\wp}^n+O(q_{\wp}^{n-1}).$$If the norm of $\wp$ is large enough, $q_\wp\ge t$, say, we have $$\nu_R(\wp)\ge\delta q_{\wp}^n.$$If $N\gg_{n,K} x^{2/d}$, Lemma 6 gives$$E_R(N)\ll_{n,K}N^{nd}\mathcal{S}_R(x)^{-1},$$where$$\mathcal{S}_R(x)=\sum_{N_{K/\Q}\mathfrak{a}\le x}\mu^2(\mathfrak{a})\prod_{\wp|\mathfrak{a}}\frac{\nu_R(\wp)}{q_\wp^n-\nu_R(\wp)}.$$For $q_{\wp}\ge t$,$$\frac{\nu_R(\wp)}{q_\wp^n-\nu_R(\wp)}=\left( \frac{q_\wp^n}{\nu_R(\wp)}-1\right)^{-1}\ge\frac{\delta}{1-\delta} ,$$so$$\mathcal{S}_R(x)\ge\underset{\wp|\mathfrak{a} \Rightarrow  q_{\wp}\ge t}{\sum_{N_{K/\Q}\mathfrak{a}\le x}}\mu^2(\mathfrak{a})\prod_{\wp|\mathfrak{a}}\frac{\delta}{1-\delta}=\underset{\wp|\mathfrak{a} \Rightarrow q_{\wp}\ge t}{\sum_{N_{K/\Q}\mathfrak{a}\le x}}\mu^2(\mathfrak{a})\left(\frac{\delta}{1-\delta} \right)^{\omega(\mathfrak{a})}. $$We hence need a lower bound for the sum$$\mathcal{S}_{\gamma, t}(x)=\underset{\wp|\mathfrak{a} \Rightarrow q_{\wp}\ge t}{\underset{\mathfrak{a}\tiny\mbox{ square-free}}{\sum_{N_{K/\Q}\mathfrak{a}\le x}}}\gamma^{\omega(\mathfrak{a})},$$where $\gamma:=\delta/(1-\delta)$. By a result of Selberg (\cite{Sel}, Theorem 2), we get$$\mathcal{S}_{\gamma, t}(x)\asymp\frac{1}{\Gamma(\gamma)}\prod_{q_\wp<t}\left( 1-\frac{1}{q_\wp}\right)^\gamma \prod_{q_\wp\ge t}\left( 1+\frac{\gamma}{q_\wp}\right) \left( 1-\frac{1}{q_\wp}\right)^\gamma x(\log x)^{\gamma-1}.$$Putting $x\asymp_{n,K}N^{d/2}$, the claim follows.
\end{proof}
As we said in the introduction, if $f$ mod $\wp$ has splitting type $r$, for some unramified $\wp$, then the Frobenius at $\wp$ has cycle structure $r$. If $G\subset S_n$ is a proper subgroup, it's a standard fact that the conjugates of $G$ do not cover $S_n$; thus $f$ cannot have all the splitting types. It follows that$$|\mathscr{P}_{n,N}\setminus\mathscr{P}_{n,N}^0|\le\sum_{r}E_r(N).$$We conclude, by Corollary 2, that$$|\mathscr{P}_{n,N}\setminus\mathscr{P}_{n,N}^0|\ll_{n,K}N^{d(n-1/2)}\log N.$$
We are now ready to prove Theorem 1, part (2). We use the following lemma to improve the exponent of $\log N$.
\begin{lemma}
	If $G$ is a transitive subgroup of $S_n$, contains a transposition and contains a $p$-cycle for some $p>n/2$, then $G=S_n$.
\end{lemma}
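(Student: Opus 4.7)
The plan is a two-step argument. First I would show that $G$ must be primitive, using the $p$-cycle. Second, I would invoke the classical fact that a primitive subgroup of $S_n$ containing a transposition must equal $S_n$.

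For primitivity, I would argue by contradiction: assume $G$ preserves a nontrivial block system with blocks of common size $m$, so that $1<m<n$ and hence $m\le n/2$. Let $\sigma\in G$ be a $p$-cycle (with $p$ prime, as is the usual convention in this lemma). The key observation is that the support of $\sigma$ is a single $\langle\sigma\rangle$-orbit of size $p>n/2\ge m$. Hence any block $B$ fixed setwise by $\sigma$ cannot meet $\mathrm{supp}(\sigma)$ (otherwise $B$ would contain the full orbit, of size $>m$); so fixed blocks are contained in the fixed-point set of $\sigma$, while moved blocks partition $\mathrm{supp}(\sigma)$. In particular $m\mid p$, and by primality of $p$ we get $m=1$ or $m=p$. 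The latter forces $n/m<2$, so again only the trivial block system, a contradiction.

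For the second step, I would introduce on $\{1,\dots,n\}$ the relation $a\sim b$ iff $a=b$ or $(a,b)\in G$. Reflexivity and symmetry are immediate; transitivity follows from the identity $(b,c)(a,b)(b,c)=(a,c)$, and $G$-invariance from $g(a,b)g^{-1}=(ga,gb)$. The equivalence classes therefore form a $G$-invariant partition. By transitivity of $G$ all classes have the same size; by primitivity they must be trivial; and by the hypothesis that $G$ contains a transposition they are not singletons. Hence there is exactly one class, every transposition lies in $G$, and so $G=S_n$.

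The only subtle point, which I regard as the main obstacle, is that the primitivity step uses $p$ prime in an essential way: for instance, a $6$-cycle in $S_{10}$ is compatible with an imprimitive block system with blocks of size $2$, so a naive reading that allows any $p>n/2$ would make the statement false. I would therefore read $p$ as a prime throughout, which is consistent with the Frobenius-theoretic context in which the lemma is going to be applied.
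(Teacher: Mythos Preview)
Your argument is correct and complete. The paper does not actually prove this lemma; its entire proof reads ``See \cite{Gal}, page 98.'' So you are supplying a self-contained justification where the paper defers to a reference. The two-step structure you give --- primitivity from the $p$-cycle, then the block-system/equivalence-class argument to get all transpositions --- is exactly the classical proof, and is in fact the argument Gallagher sketches at the cited location.

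Your remark about $p$ needing to be prime is well taken and worth keeping. The lemma is false for general $p>n/2$: your $6$-cycle example in $S_{10}$ can be completed to a genuine counterexample by taking $G=S_2\wr S_5$ acting imprimitively on $10$ points with blocks of size $2$; this group is transitive, contains the transposition swapping the two points of a block, and contains $6$-cycles such as $(1\,3\,5\,2\,4\,6)$, yet has order $2^5\cdot 5!\ll 10!$. In the paper the lemma is only ever applied through the set $P$ of splitting types with $r_p=1$ for some \emph{prime} $p>n/2$, so the intended hypothesis is indeed that $p$ is prime, and your reading is the correct one.
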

\begin{proof}
	See \cite{Gal}, page 98.
\end{proof}
Let\begin{align*}
	T&=\{r:r_2=1,\ r_4=r_6=\dots=0\},\\
	P&=\{r:r_p=1\mbox{ for some }p>n/2\}.
\end{align*}By using the above correspondence, we can view $T$ as the set of elements of $S_n$ among whose cycles there is just one transposition and no other cycles of even length. Analogously, $P$ is the set of elements of order divisible by some prime $p>n/2$. By Lemma 7, we have the inequality\begin{equation}
	|\mathscr{P}_{n,N}\setminus\mathscr{P}_{n,N}^0|\le\rho(n,N)+E_T(N)+E_P(N)
\end{equation}We can estimate $\rho(n,N)$ thanks to Proposition 1. For the other summands, we compute $\delta(T)$ and $\delta(P)$ in order to apply Proposition 4.\begin{enumerate}
	\item [$\bullet$] Write$$\delta(T)=\frac{1}{2}\underset{\sum ir_i=n-2}{\sum_{r_3,r_5,\dots}}\ \underset{\tiny\mbox{odd}}{\prod_{i\ge3}}\ \frac{1}{i^{r_i}r_i!}.$$The generating function is$$\frac{1}{2}\sum_{r_3,r_5\dots}\underset{\tiny\mbox{odd}}{\prod_{i\ge3}}\ \frac{1}{i^{r_i}r_i!}X^{2+\underset{\tiny\mbox{odd}}{\sum_{i\ge3}ir_i}}=\frac{X^2}{2}\exp{\left( \sum_{n\ge0}\frac{X^{2n+1}}{2n+1}\right) }.$$Therefore $\delta(T)$ is half the coefficient of $X^{n-2}$ of\begin{align*}
		\exp{\left( \sum_{n\ge0}\frac{X^{2n+1}}{2n+1}\right)}&=\exp\left(\int_{0}^{X}\frac{dt}{1-t^2} \right)\\
		&=\exp{\left( \frac{1}{2}\int_{0}^{X}\frac{dt}{1-t}\right) }\exp{\left( \frac{1}{2}\int_{0}^{X}\frac{dt}{1+t}\right) } \\
		&=\left( \frac{1+X}{1-X}\right)^{1/2}\\
		&=(1+X)(1-X^2)^{-1/2}\\
		&=(1+X)\frac{\partial}{\partial X}(\arcsin X)\\
		&= (1+X)\frac{\partial}{\partial X}\left( \sum_{k\ge0}\frac{1}{2^{2k}}\binom{2k}{k}X^{2k+1}\right)\\
		&= (1+X)\frac{\partial}{\partial X}\left(  \sum_{k\ge0}\frac{(2k)!}{(2^kk!)^2}X^{2k}\right) .
	\end{align*}It turns out that$$\delta(T)=\frac{(n-j)!}{2^{n-j+1}\big(\frac{n-j}{2} \big)!^2 },$$where $j=2$ if $n$ is even and $j=3$ if $n$ is odd. By Stirling's approximation $n!\sim\big(\frac{n}{e}\big)^n\sqrt{2\pi n}$ we get, for instance, when $n$ is even,\begin{align*}
		\delta(T)&\sim\frac{\big(\frac{n-2}{e}\big)^{n-2}\sqrt{2\pi (n-2)}}{2^{n-2}\big(\big(\frac{n-2}{2}\big)^{\frac{n-2}{2}}\sqrt{\pi(n-2)}\big)^{2}}\\
		&\sim\frac{1}{\sqrt{2\pi n}}.
	\end{align*}The case $n$ odd is analogous.
	\item [$\bullet$] Write$$
	\delta(P)=\sum_{n/2<p\le n}\frac{1}{p}\underset{\sum_{i\neq p}ir_i=n-p}{\sum_{r_i,i\neq p}}\prod_{i\neq p}\frac{1}{i^{r_i}r_i!}.
	$$The generating function of the last sum above for a fixed prime $n/2<p\le n$ is$$\sum_{r_i,i\neq p}\ \prod_{i\neq p}\frac{1}{i^{r_i}r_i!}X^{p+\sum_{i\neq p}ir_i}=X^p\exp\left(\sum_{n\ge1,\ n\neq p}\frac{X^{n}}{n} \right). $$The coefficient of $X^{n-p}$ of $\exp\left(\sum_{n\ge1,\ n\neq p}\frac{X^{n}}{n} \right)$ is precisely our sum, which is therefore 1, since\begin{align*}
		\exp\left(\sum_{n\ge1,\ n\neq p}\frac{X^{n}}{n} \right)&=\exp{\left( \int_{0}^{X}\frac{dt}{1-t}\right) }\\
		&=\exp(-\log(1-X))\\
		&=1+X+X^2+\dots.
	\end{align*}By the classical Martens' estimate, we conclude that$$\delta(P)=\sum_{ n/2< p\le n}\frac{1}{p}\sim\frac{\log 2}{\log n}.$$
\end{enumerate}
By (11), Lemma 7 and Proposition 1 it follows$$|\mathscr{P}_{n,N}\setminus\mathscr{P}_{n,N}^0|\ll_{n,K}N^{d(n-1/2)}(\log N)^{1-\gamma_n},$$where $\gamma_n\sim(2\pi n)^{-1/2}$, that is, part (2) of Theorem 1.

\subsubsection*{Remarks}
Let $f\in\mathscr{P}_{n,N}$. By Proposition 3, we get in particular that for every $\varepsilon>0$,$$\pi_{f,r}(x)-\delta(r)\pi_K(x)=O\left( x^{\frac{1}{2}}\log x\right) $$as $x\rightarrow+\infty$, for all but $O_{n,K}(x^{2n}(\log x)^{-3})$ polynomials $f$ with $\h(f)\ll x^{2/d}$. Indeed, if$$E(x)=\{f\in\mathscr{P}_{n,N}:|\pi_{f,r}(x)-\delta(r)\pi_K(x)|>x^{\frac{1}{2}}\log x\}$$denotes the exceptional set, one has\begin{align*}
	x(\log x)^2|E(x)|&\ll\sum_{f\in\mathscr{P}_{n,N}}|\pi_{f,r}(x)-\delta(r)\pi_K(x)|^2\\
	&\ll N^{nd}\pi_K(x),
\end{align*}if $N\gg x^{2/d}$. Hence$$|E(x)|\ll\frac{N^{nd}}{x(\log x)^2}\frac{x}{\log x}\ll \frac{x^{2n}}{(\log x)^3}$$by setting $N\asymp x^{2/d}$ and by letting $x\rightarrow+\infty$.

This sharper form$$\pi_{f,r}(x)-\delta(r)\pi_K(x)=O( x^{\frac{1}{2}+\varepsilon})$$ holds for all irreducible $f$ by assuming the Artin's conjecture for the splitting field of $f$.\\

\subsection{Proof of Theorem 1, part 3}
In order to conclude, it remains to show (1) for $G$ primitive subgroup and for $G$ transitive but imprimitive subgroup.

\subsubsection{Case 1: $G$ imprimitive}
The irreducible polynomials $f\in\mathscr{P}_{n,N}(K)$ having such $G$ as Galois group are those whose associated field $L_f=K[X]/(f)=K(\alpha)$ ($\alpha$ is any root of $f$) has a nontrivial subfield over $K$.\\
Note that for any proper divisor $e$ of $n$,\begin{multline*}
	|\{f\in\mathscr{P}_{n,N}(K):f\mbox{ irreducible, }K(\alpha)/K\mbox{ has a subfield of degree }e\}|\\
	\le|\{\beta\in\Qal:[K(\beta):K]=n,\ K(\beta)/K\mbox{ has a subfield of degree }e,\ H_K(\beta)\ll N^{1/n}\}|\\
	\le|\{\theta\in\Qal:[\Q(\theta):\Q]=nd,\ \Q(\theta)/\Q\mbox{ has a subfield of degree }ed,\ H(\theta)\ll_K N^{1/n}\}|
\end{multline*}
$$\begin{tikzpicture}
	\matrix (m) [matrix of math nodes,row sep=1em,column sep=0.00001em,minimum width=2em]
	{K(\beta)&=\Q(\theta)\\
		K&\\
		\Q& \\};
	\path[-]
	(m-1-1) edge node [left] {$\scriptstyle{n!}$} (m-2-1)
	(m-2-1) edge node [left] {$\scriptstyle{d}$} (m-3-1)
	(m-1-2) edge node [right] {$\scriptstyle{nd}$} (m-3-1)
	
	;
\end{tikzpicture}$$	
We recall that for a monic polynomial $f\in\Co[X]$, the \textit{Mahler measure} of $f$ is$$M(f)=\sum_{f(\theta)=0}\max\{1,|\theta|\}.$$For any $x\in\Qal$ and $L/K$ number field containing $x$, we define the multiplicative Weil height of $x$ over $K$ as$$H_K(x)=\prod_{\nu\in M_L}\max\{1,|x|_{\nu}\}^{[L_\nu:K_\nu]/[L:K]},$$where $\nu$ runs over all the places of $L$ (note that $H_K(x)$ does not depend on the choice of $L$). For $K=\Q$, $H_{\Q}=H$ is the usual \textit{multiplicative Weil height}. If $\alpha$ is an algebraic number of degree $n$ over $K$ and $f$ is its minimal polynomial over $K$, then$$M(f)=H_K(\alpha)^n.$$Mahler showed that $M(f)$ and $\h(f)$ are commensurate in the sense that$$\h(f)\ll M(f)\ll \h(f).$$In particular $H_K(\alpha)\ll N^{1/n}$, which explains the first inequality above.\\
For the second one, note that $H(\theta)\le H_K(\theta)$ for all $\theta\in\Qal$. Moreover, if we fix a primitive element $\gamma\in K$ so that $K=\Q(\gamma)$, we have that $K(\beta)=\Q(\theta)$, where $\theta=\beta+q\gamma$ for all but finitely many $q\in\Q$. Since$$H_K(\beta+q\gamma)\le 2H_K(\beta)H_K(q\gamma),$$it follows that $H(\theta)\ll_{K}N^{1/n}.$\\

An upper bound for the set\begin{align*}
	Z(ed,n/e,c_KN^{1/n}):=\{&\theta\in\Qal:[\Q(\theta):\Q]=nd,\\\
	& \Q(\theta)/\Q\mbox{ has a subfield of degree }ed,\ H(\theta)\le c_K N^{1/n}\}
\end{align*}is given by Widmer (\cite{Wi}, Theorem 1.1.), namely$$Z(ed,n/e,c_KN^{1/n})\ll_{n,K}N^{d\big(\frac{n}{e}+ed\big)}.$$

Finally\begin{multline*}
	|\{f\in\mathscr{P}_{n,N}(K):f\mbox{ irreducible, }K(\alpha)/K\mbox{ has a non trivial subfield}\}|\\
	\ll_{n,K}\underset{e|n}{\underset{1<e<n}{\max}}N^{d\big(\frac{n}{e}+ed\big)}\le N^{d\big(\frac{n}{2}+2d\big)},
\end{multline*}because the function $d\big(\frac{n}{x}+xd\big)$ assumes the maximum in $x=2$ for $x\in[2,n/2]$.
Since it is known, for instance from Kuba \cite{Ku}, that$$\lim_{N\rightarrow+\infty}\mathbb{P}(f\mbox{ irreducible})=1$$with error term $O(N^{-d})$, we get$$\underset{\tiny\mbox{imprimitive}}{\sum_{G\subset S_n}}N_n(N,G)\ll_{n,K} N^{d\big(\frac{n}{2}+2d\big)}\ll N^{d(n-1)},$$as long as $n\ge 2(2d+1)$.

\subsubsection{Case 2: $G$ primitive}

We need the following result which generalizes a result of Lemke Oliver and Thorne (\cite{LT}, Theorem 1.3).\\

Let $G$ be a transitive subgroup of $S_n$. Any $f\in\mathscr{N}_n(N,G)$ (which can be assumed to be irreducible) cuts out a field $L_f=K[X]/(f)$ whose normal closure $K_f/K$ has Galois group $G$ with discriminant of norm $$|N_{K/\Q}\mathfrak{D}_{L_f/K}|\ll_{n,K}N^{d(2n-2)}.$$

Let $L/K$ be an extension of degree $n$. Define$$M_L(N;K)=M_L(N)=|\{f\in\mathscr{P}_{n,N}:L_f\simeq L\}|.$$

By a theorem of Schmidt \cite{Sc}, the number of field extensions $L/K$ of degree $n$ with $|N_{K/\Q}\mathfrak{D}_{L/K}|\le X$ is $O_{n,K}(X^{(n+2)/4})$. Denote by$$\mathscr{F}_n(X,G;K)=\mathscr{F}_n(X,G)=\{L/K:[L:K]=n,\ G_{\widetilde{L}/K}\cong G,\ |N_{K/\Q}\mathfrak{D}_{L/K}|\le X\},$$where $\widetilde{L}$ is the Galois closure of $L$ over $K$.
\begin{thm}
	For any $G\subseteq S_n$ transitive subgroup, one has$$N_n(N,G)\ll_{n,K}N^{d\big(1+\frac{(2n-2)(n+2)}{4}\big)}\cdot(\log N)^{nd-1}.$$If moreover $G$ is primitive,$$N_n(N,G)\ll_{n,K}N^{d\big(1+\frac{(2n-2)(n+2)}{4}\big)-\frac{2}{n}}\cdot(\log N)^{nd-1}.$$
\end{thm}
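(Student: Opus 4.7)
The plan is to stratify $\mathscr{N}_n(N,G)$ by the $K$-isomorphism class of the field $L_f=K[X]/(f)$ that $f$ cuts out, so that
\[
N_n(N,G)\le\sum_{L}M_L(N),
\]
with $L$ ranging over degree-$n$ extensions of $K$ whose Galois closure satisfies $G_{\widetilde{L}/K}\cong G$. The discriminant bound recalled just before the theorem, $|N_{K/\Q}\mathfrak{D}_{L_f/K}|\ll_{n,K} N^{d(2n-2)}$, restricts the sum to $L\in\mathscr{F}_n(c_{n,K}N^{d(2n-2)},G)$, and Schmidt's theorem then gives
\[
|\mathscr{F}_n(c_{n,K}N^{d(2n-2)},G)|\ll_{n,K}\bigl(N^{d(2n-2)}\bigr)^{(n+2)/4}=N^{d(2n-2)(n+2)/4}.
\]

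The heart of the argument is the pointwise bound
\[
M_L(N)\ll_{n,K} N^{d}(\log N)^{nd-1},
\]
improved to $M_L(N)\ll_{n,K} N^{d-2/n}(\log N)^{nd-1}$ when $G_{\widetilde{L}/K}$ is primitive. Any $f$ counted in $M_L(N)$ is, up to a $G_{\widetilde{L}/K}$-orbit of size at most $n$, the minimal polynomial over $K$ of some primitive element $\alpha\in\mathcal{O}_L$ with $K(\alpha)=L$. The relation $\h(f)\asymp M(f)=\prod_{\sigma\colon K\hookrightarrow\Co}M(\sigma f)$, combined with $M(\sigma f)=\prod_{i=1}^{n}\max\{1,|\sigma\alpha^{(i)}|\}$, shows by Mahler's inequality that $H(\alpha)\ll N^{1/n}$. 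Since $\mathcal{O}_L$ sits as a rank-$nd$ lattice in $\R^{r_1}\times\Co^{r_2}$, the archimedean region $\{\max_\nu|\alpha|_\nu\le N^{1/n}\}$ contains $\asymp N^d$ integral elements; passing to a fundamental domain for the unit group $\mathcal{O}_L^{\times}$, whose rank is at most $nd-1$, extracts the additional $(\log N)^{nd-1}$ factor. This is the direct generalization to the base $K$ of Lemke Oliver--Thorne's Theorem 1.3.

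The $N^{2/n}$ saving in the primitive case comes from a refinement of that lattice-point count: primitivity of $L/K$ forbids a generic element $\alpha$ of small height from concentrating on proper $K$-subloci of $L$, forcing a logarithmic spread in at least one archimedean coordinate and stripping a factor $N^{2/n}$ from the count. Combining the two ingredients yields
\[
N_n(N,G)\ll_{n,K} N^{d(2n-2)(n+2)/4}\cdot N^{d}(\log N)^{nd-1}=N^{d(1+(2n-2)(n+2)/4)}(\log N)^{nd-1},
\]
and the primitive case follows on replacing $N^d$ by $N^{d-2/n}$. The main obstacle is the pointwise bound on $M_L(N)$: Lemke Oliver and Thorne work exclusively over $\Q$, and extending their geometry-of-numbers argument requires tracking constants through the $d$ archimedean embeddings of $K$, re-deriving the unit-group count inside $\mathcal{O}_L$, and preserving the primitive saving using the full Galois-theoretic structure of $\widetilde{L}/K$.
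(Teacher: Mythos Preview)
Your treatment of the transitive case is essentially the paper's: stratify by the field $L_f$, bound $M_L(N)\ll_{n,K}N^d(\log N)^{nd-1}$ via the lattice-point count in $\Omega_{N^{1/n}}\cap\mathcal{O}_L$, and multiply by Schmidt's $|\mathscr{F}_n(X,G)|\ll X^{(n+2)/4}$ with $X\asymp N^{d(2n-2)}$.

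The primitive saving, however, is not obtained the way you describe, and your claimed uniform pointwise bound $M_L(N)\ll_{n,K}N^{d-2/n}(\log N)^{nd-1}$ is false. For a fixed primitive $L/K$ with bounded discriminant there is no reason for $M_L(N)$ to be any smaller than $N^d(\log N)^{r_1+r_2-1}$; your heuristic about ``logarithmic spread in an archimedean coordinate'' does not produce a power saving. The $N^{-2/n}$ is an \emph{average} gain over the family, not a pointwise one. The actual mechanism is:
\begin{itemize}
\item The lattice count refines to $M_L(N)\ll_{n,K}\dfrac{N^d(\log N)^{nd-1}}{\lambda_L}$, where $\lambda_L=\min\{\lVert\alpha\rVert:\alpha\in\mathcal{O}_L\setminus K\}$ is the length of the shortest lattice vector outside $K$.
\item Primitivity of $G$ means $L/K$ has no proper subfield, so any $\alpha\in\mathcal{O}_L\setminus K$ generates $L$; then $\mathcal{O}_K[\alpha]$ is a full-rank subring and the Ellenberg--Venkatesh lemma gives $\lambda_L\gg |N_{K/\Q}\mathfrak{D}_{L/K}|^{1/(nd(n-1))}$.
\item One then performs partial summation of $|N_{K/\Q}\mathfrak{D}_{L/K}|^{-1/(nd(n-1))}$ against Schmidt's bound $|\mathscr{F}_n(Y,G)|\ll Y^{(n+2)/4}$ up to $Y=N^{d(2n-2)}$, which yields exactly the extra factor $N^{-d(2n-2)/(nd(n-1))}=N^{-2/n}$.
\end{itemize}
So the saving is tied to the discriminant of $L$, not to an intrinsic improvement of the height count for a single $L$; replacing your uniform $M_L$ bound by the $\lambda_L$-weighted one and inserting the partial-summation step fixes the argument.
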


\begin{proof}
	Since the discriminant of $f\in\mathscr{P}_{n,N}$ satisfies $N_{K/\Q}d_f\ll_{n,K}N^{d(n-2)}$, we can write\begin{equation}
		N_n(N,G)\ll_{n,K}\sum_{L\in\mathscr{F}_n(N^{d(n-2)},G)}M_L(N)
	\end{equation}Now, for any $L/K$ as above with signature $(r_1,r_2)$,\begin{align*}
		M_L(N)&\le|\{\alpha\in\mathcal{O}_L:K(\alpha)\cong L,\ H_K(\alpha)\ll_{n,K}N^{1/n}\}|\\
		&\ll_{n,K}|\Omega_{N^{1/n}}\cap(\mathcal{O}_L\setminus K)|,
	\end{align*}where for $Y\ge1$, $\Omega_Y$ is the subset of the Minkowski space $L_\infty=\R^{r_1}\times\Co^{r_2}$ of elements whom Weil height over $K$ is at most $Y$.\\
	By applying Davenport's lemma and by computing the volume of $\Omega_Y$ we achieve$$|\Omega_Y\cap\Z^n|\ll_{n,d}Y^{nd}(\log Y)^{r_1+r_2-1}.$$By Proposition 2.2 of \cite{LT},$$|\Omega_Y\cap\mathcal{O}_L|\ll_{n,d}Y^{nd}(\log Y)^{r_1+r_2-1}$$as well. In particular$$M_L(N)\ll_{n,K}N^d(\log N)^{r_1+r_2-1}.$$As in the last part of the proof of Theorem 2.1 of \cite{LT}, one gets the improvement$$M_L(N)\ll_{n,K}\frac{N^d(\log N)^{r_1+r_2-1}}{\lambda},$$where $\lambda=\{\lVert\alpha\rVert:\alpha\in\mathcal{O}_L\setminus K\}$, $\lVert\alpha\rVert$ is the largest archimedean valuation of $\alpha$.
	
	By (12) and the result of Schmidt follows the first part of the theorem.\\
	
	Let now $G$ be primitive; in particular $L/K$ has no proper subextensions. Therefore essentially as in \cite{EV}, Lemma 3.1, since if $\alpha\in\mathcal{O}_L\setminus K$ then $L=K(\alpha)$, and $\mathcal{O}_K[\alpha]$ is a subring of $\mathcal{O}_L$ which generates $\mathcal{O}_L$ as a $K$-vector space, one deduces$$\lVert\alpha\rVert\gg|N_{K/\Q}\mathfrak{D}_{L/K}|^{\frac{1}{nd(n-1)}}.$$Finally, by partial summation we conclude\begin{align*}
		N_n(N,G)&\ll_{n,K}\sum_{L\in\mathscr{F}_n(N^{d(2n-2)},G)}\frac{N^d(\log N)^{nd-1}}{|N_{K/\Q}\mathfrak{D}_{L/K}|^{\frac{1}{nd(n-1)}}}\\
		&\ll_{n,K}N^{d\big(1+\frac{(2n-2)(n+2)}{4}\big)-\frac{2}{n}}\cdot(\log N)^{nd-1}.
	\end{align*}
\end{proof}

Assume now $G$ to be primitive; for $g\in G$, $g=c_1\dots c_t$ where $c_j$ are disjoint cycles, the index of $g$ is $$\mbox{ind}(g)=n-t.$$The \textit{index} of the group $G$ is$$\mbox{ind}(G)=\underset{g\neq1}{\min_{g\in G}}\ \mbox{ind}(g).$$
\begin{prop}
	Let $f\in\mathcal{O}_K[X]$ be a monic, irreducible polynomial of degree $n$ with associated field $L_f=K[X]/(f)$. If $\ind(G_f)=k$, then the discriminant $\mathfrak{D}_{L_f/K}$ has the property $\Pm_k$: if $\wp\subseteq\mathcal{O}_K$, $\wp|\mathfrak{D}_{L_f/K}$, then $\wp^k|\mathfrak{D}_{L_f/K}$.
\end{prop}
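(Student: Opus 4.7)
The plan is to analyze ramification locally in the Galois closure $\widetilde{L}_f = K_f$ and to bound the $\wp$-adic valuation of $\mathfrak{D}_{L_f/K}$ in terms of the cycle structure of a non-trivial inertia element acting on the roots of $f$. Throughout, identify $G = G_f$ with its image in $S_n$ via the action on the $n$ roots, let $H \le G$ be the stabilizer of one root, so that $L_f \cong \widetilde{L}_f^H$ and $\{1,\dots,n\} \leftrightarrow G/H$.

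Given a prime $\wp \mid \mathfrak{D}_{L_f/K}$, fix a prime $\mathfrak{P}$ of $\widetilde{L}_f$ above $\wp$ with decomposition group $D$ and inertia group $I$. I would use the standard bijection between primes $\mathfrak{p}$ of $L_f$ above $\wp$ and $D$-orbits $\mathcal{O}$ on $G/H$, under which $e_\mathfrak{p}$ equals the common length of the $I$-orbits contained in $\mathcal{O}$ and $f_\mathfrak{p}$ equals the number of such $I$-orbits in $\mathcal{O}$. Summing over all primes gives
\begin{equation*}
\sum_{\mathfrak{p}\mid\wp} f_\mathfrak{p} \;=\; \#\{\,I\text{-orbits on }G/H\,\}.
\end{equation*}
Because $\wp$ is ramified in $L_f$, some $e_\mathfrak{p}>1$, so $I$ acts non-trivially on $G/H$; since $G$ acts faithfully on the root set, this forces $I$ to contain a non-identity permutation of $\{1,\dots,n\}$.

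Next I combine the different--discriminant formula $v_\wp(\mathfrak{D}_{L_f/K}) = \sum_{\mathfrak{p}\mid\wp} f_\mathfrak{p}\,d(\mathfrak{p}\mid\wp)$ with the universal bound $d(\mathfrak{p}\mid\wp) \ge e_\mathfrak{p}-1$ (equality in the tame case, strict in the wild case) to get
\begin{equation*}
v_\wp(\mathfrak{D}_{L_f/K}) \;\ge\; \sum_{\mathfrak{p}\mid\wp}(e_\mathfrak{p}-1)f_\mathfrak{p} \;=\; n \;-\; \sum_{\mathfrak{p}\mid\wp} f_\mathfrak{p} \;=\; n \;-\; \#\{\,I\text{-orbits}\,\}.
\end{equation*}
Now picking any $\sigma \in I \setminus \{1\}$, the cycles of $\sigma$ on $\{1,\dots,n\}$ refine the $I$-orbit partition because $\langle\sigma\rangle \le I$, so $\#\{\sigma\text{-cycles}\} \ge \#\{I\text{-orbits}\}$. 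Combining with the previous display,
\begin{equation*}
v_\wp(\mathfrak{D}_{L_f/K}) \;\ge\; n - \#\{\,I\text{-orbits}\,\} \;\ge\; n - \#\{\,\sigma\text{-cycles}\,\} \;=\; \ind(\sigma) \;\ge\; \ind(G_f) \;=\; k,
\end{equation*}
the last inequality since $\sigma\neq 1$. Hence $\wp^k \mid \mathfrak{D}_{L_f/K}$, which is property $\Pm_k$.

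The only delicate point is wild ramification, but the coarse inequality $d(\mathfrak{p}\mid\wp) \ge e_\mathfrak{p}-1$ circumvents any explicit computation with higher ramification groups, so the argument goes through uniformly; the rest is a clean translation between $I$-orbits on $G/H$ and the local factorization of $\wp$ in $L_f$.
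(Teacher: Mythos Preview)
Your proof is correct and follows essentially the same approach as the paper: both bound $v_\wp(\mathfrak{D}_{L_f/K})\ge\sum_{\mathfrak p\mid\wp}(e_{\mathfrak p}-1)f_{\mathfrak p}$ and identify this quantity with the index of a non-trivial inertia element acting on the roots. The only difference is cosmetic: the paper splits into tame and wild cases and in the tame case uses a cyclic generator of $I$ whose cycle structure matches the $(e_i,f_i)$ exactly, whereas you argue uniformly by picking any $\sigma\in I\setminus\{1\}$ and observing that its cycles refine the $I$-orbits, which in fact makes the wild case cleaner than in the paper.
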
	
\begin{proof}
	The Galois group $G_f$ acts on the $n$ embeddings of $L_f$ into $K_f$, its Galois closure. Let $\wp\subseteq\mathcal{O}_K$,$$\wp\mathcal{O}_{L_f}=\prod_i \mathfrak{P}_i^{e_i},$$where for each $i$, $\mathfrak{P}_i$ has inertia degree $f_i$ over $K$. Now, the primes dividing the discriminant of $L_f/K$ are either tamely ramified or wildly ramified.\begin{enumerate}
		\item[$\bullet$] If $\wp$ is tamely ramified,
		the inertia group $I_\wp$ is cyclic, and any generator $g\in G_f$ is the product of disjoint cycles consisting of $f_1$ clycles of length $e_1$, $f_2$ cycles of length $e_2$ and so on. Hence the exponent of $\wp$ dividing $\mathfrak{D}_{L_f/K}$ is$$v_\wp(\mathfrak{D}_{L_f/K})=\sum_i(e_i-1)f_i=\ind(g)\ge\ind(G_f)=k.$$
		\item[$\bullet$] If $\wp$ is wildly ramified, we have the strict inequalities$$v_\wp(\mathfrak{D}_{L_f/K})>\sum_i(e_i-1)f_i>k.$$
	\end{enumerate}In both cases we see that $\wp^k|\mathfrak{D}_{L_f/K}$.
\end{proof}
For a primitive group $G$, the followings are standard facts.\begin{enumerate}
	\item[a.] If $G$ contains a transposition, then $G=S_n$. In particular $\ind(G)\ge2$.
	\item[b.] If $G$ contains a 3-cycle or a double transposition and $n\ge9$, then $G=A_n$ or $S_n$. In particular $\ind(G)\ge3$.
\end{enumerate}	
It follows from Proposition 5, a and b that:
\begin{coroll}
	Let $f\in\mathcal{O}_K[X]$ be a monic, irreducible polynomial of degree $n$ with $G_f\subset S_n$ primitive. Then $\mathfrak{D}_{L_f/K}$ has the property $\Pm_2$.\\
	If moreover $G_f\neq A_n$ and $n\ge 9$, then $\mathfrak{D}_{L_f/K}$ has the property $\Pm_3$.
\end{coroll}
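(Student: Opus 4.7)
My plan is to deduce both assertions directly from Proposition 5 by computing the index of the relevant cycle types and invoking facts (a) and (b). The corollary reduces to showing that $\ind(G_f) \geq 2$ under the hypotheses of the first statement and $\ind(G_f) \geq 3$ under those of the second; Proposition 5 then immediately yields $\Pm_2$ and $\Pm_3$ respectively.

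First I would record the elementary index computation: for $g \in S_n$ written as a product of disjoint cycles (counting fixed points as $1$-cycles) with $t$ total cycles, $\ind(g) = n-t$. In particular, the only non-identity elements of index at most $1$ are the transpositions, and the only elements of index exactly $2$ are the $3$-cycles and the double transpositions. This is the content on which both parts hinge.

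For the first claim, the hypothesis that $G_f \subsetneq S_n$ is primitive combined with fact (a) forbids $G_f$ from containing any transposition, so every non-identity element of $G_f$ has index at least $2$. Hence $\ind(G_f) \geq 2$ and Proposition 5 gives $\wp \mid \mathfrak{D}_{L_f/K} \Rightarrow \wp^2 \mid \mathfrak{D}_{L_f/K}$, which is precisely $\Pm_2$.

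For the second claim, if additionally $n \geq 9$ and $G_f \neq A_n$ (still with $G_f \neq S_n$), then fact (b) rules out both $3$-cycles and double transpositions in $G_f$, and fact (a) has already ruled out transpositions. By the index inventory above, every non-identity element of $G_f$ then has index at least $3$, so $\ind(G_f) \geq 3$ and Proposition 5 yields $\Pm_3$. There is essentially no obstacle here beyond cleanly assembling these facts; the only point that requires a moment's care is the cycle-type enumeration of elements of index $\leq 2$, which is the only concrete calculation used.
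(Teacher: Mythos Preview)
Your proposal is correct and matches the paper's own (implicit) argument exactly: the paper simply records facts (a) and (b) and states that the corollary follows from them together with Proposition~5, which is precisely the route you take. The only content you have added is the explicit enumeration of cycle types with index $\le 2$, which is the natural way to make the deduction rigorous.
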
	

We now follow and generalize the argument of Bhargava \cite{Bh1} by dividing the set $\mathscr{N}_n(N,G)$ into three sets.\\
For an irreducible $f\in\mathscr{N}_n(N,G)$ with $G$ primitive, let$$\mathfrak{C}_f:=\prod_{\wp|\mathfrak{D}_{L_f/K}}\wp$$ and denote by $\mathfrak{D}_f$ the discriminant $\mathfrak{D}_{L_f/K}$. \\
Let$$\mathscr{N}_n(N):=\underset{\tiny\mbox{primitive}}{\bigcup_{G\subset S_n}}\mathscr{N}_n(N,G).$$As observed before, we can assume that all polynomials are irreducible.\\
For $\delta>0$, the sets $\mathscr{N}_1(N,\delta),\mathscr{N}_2(N,\delta)$ and $\mathscr{N}_3(N,\delta)$ are defined as\begin{align*}
	\mathscr{N}_1(N,\delta)&:=\{f\in\mathscr{N}_n(N):|N_{K/\Q}\mathfrak{C}_f|\le N^{d(1+\delta)},\ |N_{K/\Q}\mathfrak{D}_f|> N^{d(2+2\delta)}\},\\
	\mathscr{N}_2(N,\delta)&:=\{f\in\mathscr{N}_n(N):|N_{K/\Q}\mathfrak{D}_f|< N^{d(2+2\delta)}\},\\
	\mathscr{N}_3(N,\delta)&:=\{f\in\mathscr{N}_n(N):|N_{K/\Q}\mathfrak{C}_f|> N^{d(1+\delta)}\}.	
\end{align*}
We use the following result, in which we identify the space of binary $n$-ic forms over $\mathcal{O}_K$ having leading coefficient 1 with the space of monic polynomials of degree $n$ over $\mathcal{O}_K$. The proof uses Fourier analysis over finite fields.\\
The index of a binary $n$-ic forms $f$ over $\mathcal{O}_K$ modulo $\wp|p$ is defined to be$$\sum_{i=1}^{r}(e_i-1)f_i,$$where $f\mod\wp=\prod_{i=1}^{r}P_i^{e_i}$, $P_i$ irreducible of degree $f_i$ over $\in\mathbb{F}_{p^{[\mathcal{O}_K/\wp:\mathbb{F}_p]}}$ for all $i$.

The proofs of the next results which are not included here, can be found in \cite{Bh1}.
\begin{prop}
	Let $0<\delta\ll_{n,d}1$ be small enough and let $\mathfrak{C}=\wp_1\dots\wp_m$, $\wp_i\neq\wp_j$ ($i\neq j$) be a product of primes in $\mathcal{O}_K$ of norm $|N_{K/\Q}\mathfrak{C}|<N^{d(1+\delta)}$.\\
	For each $i=1,\dots,m$ pick an integer $k_i$. Then the number of $K$-integral binary $n$-ic forms in a box $[-N,N]^{d(n+1)}$ with coefficients of height $\le N$, such that, modulo $\wp$, have index at least $k_i$, is at most$$\ll_{K,\varepsilon}\frac{N^{nd+\varepsilon}}{\prod_{i=1}^{m}|N_{K/\Q}\wp_i|^{k_i}}$$for every $\varepsilon>0$.
\end{prop}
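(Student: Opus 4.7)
The strategy is to follow Bhargava's argument for $K=\Q$ in \cite{Bh1} and adapt every step to the number-field setting by replacing rational primes $p$ with prime ideals $\wp$ of $\mathcal{O}_K$ and absolute values by norms. The proof splits naturally into a \emph{local} count modulo each $\wp_i$, a \emph{Chinese Remainder Theorem} step to combine the local counts into a count modulo $\mathfrak{C}$, and a \emph{global lifting} step passing from residues mod $\mathfrak{C}$ to the box $[-N,N]^{d(n+1)}$ via Fourier analysis over $\mathcal{O}_K/\mathfrak{C}$.

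For the local step, I would prove that the number of monic binary $n$-ic forms over $\mathbb{F}_{q_\wp}$ (where $q_\wp = N_{K/\Q}\wp$) of index at least $k$ is $\ll q_\wp^{n-k}$. This is a codimension statement: the locus of polynomials of index $\ge k$ in the affine space of monic $n$-ic polynomials over $\mathbb{F}_{q_\wp}$ has codimension $\ge k$, which can be established from the cycle-index generating function used in the proof of Corollary 2 together with the description of the index in terms of the splitting type $(r_1,\dots,r_n)$, namely $\mathrm{ind} = \sum_i (i-1) r_i$. Hence the proportion of monic polynomials mod $\wp$ of index $\ge k_i$ is $\ll q_{\wp_i}^{-k_i}$.

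For the CRT step, one uses the isomorphism $\mathcal{O}_K/\mathfrak{C}\cong\prod_{i=1}^m \mathcal{O}_K/\wp_i$ to deduce that the number of residue classes of coefficient vectors $(\alpha_0,\dots,\alpha_{n-1})\in(\mathcal{O}_K/\mathfrak{C})^{n}$ which satisfy the local index condition at every $\wp_i$ is at most $\prod_i q_{\wp_i}^{n-k_i}$, out of $|N_{K/\Q}\mathfrak{C}|^n$ possibilities. For the global lifting, each such residue class contributes about $(2N)^{nd}/|N_{K/\Q}\mathfrak{C}|^n$ points of the box $[-N,N]^{nd}$ up to an error controlled by character sums over $\mathcal{O}_K/\mathfrak{C}$; combining gives a main term of order $N^{nd}\prod_i q_{\wp_i}^{-k_i}$, and the $N^\varepsilon$ factor absorbs the divisor-type sums (e.g.\ $\tau(\mathfrak{C})$) and the logarithmic losses in the character-sum estimates.

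The main obstacle is the third step: one needs character-sum/Poisson bounds on $(\mathcal{O}_K/\mathfrak{C})^{n+1}$ that are uniform in $\mathfrak{C}$, so that the error dominates neither for small nor for moderately large $\wp_i$. The hypothesis $|N_{K/\Q}\mathfrak{C}|<N^{d(1+\delta)}$ with $\delta$ sufficiently small (depending on $n$ and $d$) is exactly what guarantees that the main term beats the error; this is the analogue of Bhargava's condition in \cite{Bh1}, and the number-field version of the large-sieve inequality in Proposition 2 of the present paper provides the quantitative tool to make the adaptation go through without modification of the overall shape of the bound.
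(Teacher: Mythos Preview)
The paper does not prove this proposition itself but defers to \cite{Bh1}, noting only that the proof ``uses Fourier analysis over finite fields''; your outline (local density $\ll q_\wp^{-k}$ for the index-$\ge k$ locus, CRT, and a Poisson/character-sum lifting to the box under the hypothesis $|N_{K/\Q}\mathfrak{C}|<N^{d(1+\delta)}$) is exactly Bhargava's argument transported to $\mathcal{O}_K$, and thus coincides with the paper's intended approach. One small inaccuracy: the lifting step in \cite{Bh1} is carried out by direct Poisson summation over $(\mathcal{O}_K/\mathfrak{C})^{n+1}$, not via the large sieve of Proposition~2, which the paper invokes only in the unrelated proof of Theorem~1(2).
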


Theorem 1, (3) follows by the three lemmas below together with Section 1.1.
\begin{lemma}For $\delta>0$ sufficiently small,
	$$|\mathscr{N}_1(N,\delta)|\ll_{n,K}N^{d(n-1)}$$as $N\rightarrow+\infty$.
\end{lemma}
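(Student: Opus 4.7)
The plan is to combine property $\Pm_2$ for primitive Galois groups (Corollary 3) with the index sieve of Proposition 6. For $f\in\mathscr{N}_1(N,\delta)$ Corollary 3 gives $v_{\wp}(\mathfrak{D}_f)\ge 2$ at every $\wp\mid\mathfrak{C}_f$, and inspecting the proof of Proposition 5 this exponent coincides with the polynomial index of $f$ modulo $\wp$ at each tame prime. In particular $\mathfrak{D}_f$ is a squarefull ideal of $\mathcal{O}_K$, which is the structural input driving the argument.

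\textbf{Main computation.} I partition $\mathscr{N}_1(N,\delta)$ according to the value of the tame part $\mathfrak{D}_f^{\mathrm{t}}$ of $\mathfrak{D}_f$. For each squarefull ideal $\mathfrak{D}$ supported only on tame primes with $|N_{K/\Q}\mathrm{rad}(\mathfrak{D})|\le N^{d(1+\delta)}$, Proposition 6 applied with $\mathfrak{C}=\mathrm{rad}(\mathfrak{D})$ and $k_{\wp}=v_{\wp}(\mathfrak{D})$ at each $\wp\mid\mathfrak{D}$ yields
\[
\#\{f\in\mathscr{P}_{n,N}:\mathfrak{D}_f^{\mathrm{t}}=\mathfrak{D}\}\;\ll_{n,K,\varepsilon}\;\frac{N^{nd+\varepsilon}}{\prod_{\wp\mid\mathfrak{D}}q_{\wp}^{v_\wp(\mathfrak{D})}}\;=\;\frac{N^{nd+\varepsilon}}{|N_{K/\Q}\mathfrak{D}|}.
\]
I then sum over such $\mathfrak{D}$ with $|N_{K/\Q}\mathfrak{D}|>M:=c_{n,K}N^{d(2+2\delta)}$, the constant $c_{n,K}$ absorbing the $O_{n,K}(1)$-bounded wild contribution. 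Using that the number of squarefull ideals of $\mathcal{O}_K$ of norm at most $X$ is $\asymp_K X^{1/2}$ (a Tauberian consequence of the Dirichlet series $\zeta_K(2s)\zeta_K(3s)/\zeta_K(6s)$), partial summation gives
\[
\sum_{\substack{\mathfrak{D}\text{ squarefull}\\ |N_{K/\Q}\mathfrak{D}|>M}}\frac{1}{|N_{K/\Q}\mathfrak{D}|}\;\ll_K\;M^{-1/2}\;\asymp_{n,K}\;N^{-d(1+\delta)}.
\]
Combining the two estimates,
\[
|\mathscr{N}_1(N,\delta)|\;\ll_{n,K,\varepsilon}\;N^{d(n-1)-d\delta+\varepsilon}\;\ll\;N^{d(n-1)}
\]
for any $\varepsilon<d\delta$ and $\delta>0$ sufficiently small, which is the asserted bound.

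\textbf{Main obstacle.} The delicate point is reconciling the polynomial index mod $\wp$ (Bhargava's definition, entering Proposition 6) with the field-discriminant exponent $v_\wp(\mathfrak{D}_f)$ controlled by $\Pm_2$: by Proposition 5 these coincide at tame primes but may differ at wild primes $\wp\mid p$ with $p\mid n!$. However, there are only $O_{n,K}(1)$ wild primes, each contributing a uniformly bounded exponent to $\mathfrak{D}_f$, so $|N_{K/\Q}\mathfrak{D}_f^{\mathrm{w}}|=O_{n,K}(1)$ and the required lower bound $|N_{K/\Q}\mathfrak{D}_f^{\mathrm{t}}|\gg_{n,K}N^{d(2+2\delta)}$ persists. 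Restricting the sieve to the tame primes therefore costs only a multiplicative constant and suffices for the bound.
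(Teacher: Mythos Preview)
Your proof is correct and follows essentially the same strategy as the paper: use the $\Pm_2$ property to see that $\mathfrak{D}_f$ is squarefull, apply Proposition~6 with $k_\wp=v_\wp(\mathfrak{D})$ to bound the count for each fixed discriminant value by $N^{nd+\varepsilon}/|N_{K/\Q}\mathfrak{D}|$, and then sum over the admissible $\mathfrak{D}$. Your treatment is in fact more explicit than the paper's in two respects---you carry out the tail sum over squarefull ideals via the $X^{1/2}$ count and partial summation (the paper's displayed computation $\sum_{\mathfrak D}\ll N^{-2d-2d\delta}$ is somewhat loose), and you separate out the bounded wild contribution---but the underlying mechanism is identical. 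One small caveat: your claim that the polynomial index modulo $\wp$ \emph{coincides} with $v_\wp(\mathfrak{D}_f)$ at tame primes is only literally true when $\wp$ does not divide the conductor of $\mathcal{O}_K[\alpha]$; what you actually need (and what the paper also uses implicitly) is the inequality poly-index $\ge v_\wp(\mathfrak{D}_f)$, which does hold at tame primes and suffices for invoking Proposition~6.
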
 
\begin{proof}
	Given a number field $L/K$, let $\mathfrak{C}$ be the product of the ramified primes and let $\mathfrak{D}$ be its discriminant. The polynomials $f$ so that $L_f\cong L$ (so $\mathfrak{C}_f=\mathfrak{C}$ and $\mathfrak{D}_f=\mathfrak{D}$) must have at least a triple root or at least two double roots modulo $\wp$ for every $\wp|\mathfrak{C}_f$. This follows easily by Proposition 6. Now, the density of the degree $n$ polynomials over a finite field $\mathbb{F}_q$ having a triple root is $1/q^2$, whereas the density of the ones having two double roots is $2/q^3$. Therefore the density of the above polynomials is$$\ll\prod_{\wp|\mathfrak{C}_f}\frac{2}{|N_{K/\Q}\wp|^2}\ll\frac{2^{\omega(\mathfrak{D})}}{|N_{K/\Q}\mathfrak{D}|},$$where $\omega(\mathfrak{D})$ is the number of prime divisors of $\mathfrak{D}$.\\
	By Proposition 6 the number of $f\in\mathscr{P}_{n,N}$ with $|N_{K/Q}\mathfrak{C}|\le N^{d(1+\delta)}$ and $\mathfrak{D}_f=\mathfrak{D}$ is$$\ll_{K,\varepsilon}\frac{N^{nd+\varepsilon}}{|N_{K/\Q}\mathfrak{D}|}.$$Summing over all $\mathfrak{D}$ of norm $|N_{K/\Q}\mathfrak{D}|>N^{d(2+2\delta)}$ gives\begin{multline*}
		\sum_{\mathfrak{D}}O_{K,\varepsilon}(N^{nd+\varepsilon}2^{\omega(\mathfrak{C})}/|N_{K/\Q}\mathfrak{D}|)\\
		=O_{K,\varepsilon}(N^{nd+\varepsilon}\cdot 2^{d(1+\delta)}\cdot N^{-2d-2d\delta})\\
		\ll_{n,K} N^{d(n-1)}.
	\end{multline*}
\end{proof}
\begin{lemma}
	If $n\le[3d+\sqrt{9d^2-4d}]+1$ then for $\delta>0$ sufficiently small,
	$$|\mathscr{N}_2(N,\delta)|\ll_{n,K}N^{d(n-1)},$$as $N\rightarrow+\infty$.
\end{lemma}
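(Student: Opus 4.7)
The plan is to sieve $\mathscr{N}_2(N,\delta)$ by organising its polynomials according to the field $L_f=K[X]/(f)$ each cuts out. Since $G_f$ primitive forces $f$ irreducible,
\[|\mathscr{N}_2(N,\delta)|\;\le\;\underset{G\text{ primitive}}{\sum_{G\subset S_n}}\ \sum_{L\in\mathscr{F}_n(N^{d(2+2\delta)},G;K)}M_L(N).\]
I would then reuse the two ingredients assembled in the proof of Theorem 4. The first is the refined per-field bound, valid for primitive $L/K$ via the Ellenberg--Venkatesh-type lower bound $\lVert\alpha\rVert\gg|N_{K/\Q}\mathfrak{D}_{L/K}|^{1/(nd(n-1))}$ proved by Hadamard applied to the matrix of embeddings of the $\mathcal{O}_K$-order $\mathcal{O}_K[\alpha]$:
\[M_L(N)\;\ll_{n,K}\;\frac{N^{d}(\log N)^{nd-1}}{|N_{K/\Q}\mathfrak{D}_{L/K}|^{1/(nd(n-1))}}.\]
The second is Schmidt's theorem in its relative form over $K$, $|\mathscr{F}_n(X,G;K)|\ll_{n,K}X^{(n+2)/4}$, combined with Abel summation:
\[\sum_{L}\frac{1}{|N_{K/\Q}\mathfrak{D}_{L/K}|^{1/(nd(n-1))}}\;\ll\;\bigl(N^{d(2+2\delta)}\bigr)^{\frac{n+2}{4}-\frac{1}{nd(n-1)}}.\]
Multiplying these out gives
\[|\mathscr{N}_2(N,\delta)|\;\ll_{n,K}\;N^{d+d(2+2\delta)\bigl(\frac{n+2}{4}-\frac{1}{nd(n-1)}\bigr)}(\log N)^{nd-1}.\]

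The remaining task is to check that under the hypothesis on $n,d$, and for $\delta>0$ chosen small in terms of $n,d$, the above exponent is at most $d(n-1)$. After dividing by $d$ and passing to the limit $\delta\to 0^{+}$, this becomes a polynomial inequality in $n$ and $d$ whose boundary is governed by the quadratic relation $(n-3d)^2\;\lessgtr\;9d^2-4d$; its positive root is precisely $n=[3d+\sqrt{9d^2-4d}]+1$, explaining the shape of the stated hypothesis. In the window described by the hypothesis a strict inequality holds with positive margin, which accommodates both the $O(\delta)$ perturbation and the logarithmic factor $(\log N)^{nd-1}$ (the latter absorbed at the cost of an arbitrarily small power of $N$ by shrinking $\delta$ further).

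The main obstacle is the exponent bookkeeping rather than any new analytic input. One must keep careful track of the three places where $d$ enters---the Weil-height normalisation $H_K(\alpha)\ll N^{1/n}$, the rank-$nd$ lattice-point count in the Minkowski space of $L$, and the denominator $nd(n-1)$ in the discriminant lower bound---and must invoke Schmidt's theorem in its relative form with exponent $(n+2)/4$ (rather than the weaker absolute exponent $(nd+2)/4$ one would get by viewing $L$ only over $\Q$), since only the former makes the quadratic window in $n$ and $d$ close in the regime demanded by the hypothesis.
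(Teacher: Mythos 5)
Your argument is essentially the paper's: the paper proves this lemma by rerunning the proof of Theorem 4 (the refined per-field bound $M_L(N)\ll_{n,K} N^{d}(\log N)^{nd-1}\,|N_{K/\Q}\mathfrak{D}_{L/K}|^{-1/(nd(n-1))}$ for primitive $G$, Schmidt's bound $\ll X^{(n+2)/4}$, and partial summation) with the discriminant cut-off $N^{d(2+2\delta)}$ in place of $N^{d(2n-2)}$, exactly the route you take. The only discrepancies are cosmetic: your partial summation gives the subtracted term $\frac{2+2\delta}{n(n-1)}$ where the paper writes $-\frac{2}{n}$, and the final exponent comparison in fact reduces (as $\delta\to0^{+}$) to roughly $n\ge 6$ rather than to the exact quadratic $(n-3d)^2\ge 9d^2-4d$ you invoke, so the stated hypothesis --- read, as intended and as in Theorem 1(3), as $n\ge[3d+\sqrt{9d^2-4d}]+1$ --- is simply an amply sufficient condition under which your strict inequality with positive margin holds.
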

\begin{proof}
	Note that one can prove Theorem 4 by using a different bound, if holds, for the discriminat insted of $\ll N^{d(2n-2)}$ and improve the result itself. For the polynomials in our set we thus have$$|\mathscr{N}_2(N,\delta)|\ll_{n,K}N^{d\big(1+\frac{(2+2\delta)(n+2)}{4}\big)-\frac{2}{n}}\cdot(\log N)^{nd-1},$$which is at most$$\ll_{n,K}N^{d\big(1+\frac{(2+2\delta)(n+2)}{4}\big)-\frac{2}{n}+\varepsilon}$$for any $\varepsilon>0$. If $n\le[3d+\sqrt{9d^2-4d}]+1$, one has the desired upper bound $O_{n,K}(N^{d(n-1)})$.
\end{proof}
\begin{prop}
	Let $\wp\in\mathcal{O}_K$ be a prime ideal over $p$ and let $q=p^{[\mathcal{O}_k/\wp:\mathbb{F}_p]}$. If $h(X_1,\dots,X_n)\in\mathcal{O}_K[X_1,\dots,X_n]$ is such that\begin{align*}
		&h(c_1,\dots,c_n)\equiv 0\mod q^2,\\
		&h(c_1+qd_1,\dots,c_n+qd_n)\equiv 0\mod q^2
	\end{align*}for all $(d_1,\dots,d_n)\in\mathcal{O}_K^n$, then$$\frac{\partial}{\partial x_n}h(c_1,\dots,c_n)\equiv0\mod q.$$
\end{prop}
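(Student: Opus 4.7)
The proof is essentially an application of the algebraic Taylor expansion of a polynomial, so I expect it to be short and essentially mechanical. Here is the plan.

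The key identity is the algebraic Taylor expansion, valid for any polynomial $h \in \mathcal{O}_K[X_1,\dots,X_n]$ and any $c_i, d_i \in \mathcal{O}_K$:
\begin{equation*}
h(c_1+qd_1,\dots,c_n+qd_n) = h(c_1,\dots,c_n) + q\sum_{i=1}^{n} d_i\,\frac{\partial h}{\partial X_i}(c_1,\dots,c_n) + q^2\,R(c,d),
\end{equation*}
where $R(c,d)\in\mathcal{O}_K$ is a polynomial expression in the $c_i$ and $d_i$ with coefficients in $\mathcal{O}_K$, coming from the terms of total degree $\ge 2$ in the $d_i$ variables in the expansion. This identity holds in any commutative ring since the binomial coefficients appearing already account for the factorials, and one never divides.

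Substituting the two hypotheses, namely $h(c_1,\dots,c_n)\equiv 0 \pmod{q^2}$ and $h(c_1+qd_1,\dots,c_n+qd_n)\equiv 0 \pmod{q^2}$, into the Taylor expansion, the $q^2R(c,d)$ term is automatically in $q^2\mathcal{O}_K$, so we obtain
\begin{equation*}
q\sum_{i=1}^{n} d_i\,\frac{\partial h}{\partial X_i}(c_1,\dots,c_n) \equiv 0 \pmod{q^2}
\end{equation*}
for every choice of $(d_1,\dots,d_n)\in\mathcal{O}_K^n$. Cancelling a factor of $q$ (which is permitted because the congruence holds in $\mathcal{O}_K$ and $q$ is a nonzero rational integer) yields
\begin{equation*}
\sum_{i=1}^{n} d_i\,\frac{\partial h}{\partial X_i}(c_1,\dots,c_n) \equiv 0 \pmod{q}
\end{equation*}
for every $(d_1,\dots,d_n)\in\mathcal{O}_K^n$.

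Finally, specialize to $d_n = 1$ and $d_i = 0$ for $i\ne n$ to conclude $\frac{\partial h}{\partial X_n}(c_1,\dots,c_n)\equiv 0 \pmod{q}$, as desired. (By the same specialization the conclusion holds for every partial derivative, not only the last one.) The only conceptual step worth stressing is that the cancellation of the $q$ factor is legitimate: the relation lives inside $\mathcal{O}_K$, and divisibility by $q^2$ in $\mathcal{O}_K$ implies that after dividing by the integer $q$ the quotient remains in $q\mathcal{O}_K$. No real obstacle arises; the statement is a clean consequence of the first-order Taylor formula.
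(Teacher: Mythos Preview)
Your proof is correct and follows essentially the same approach as the paper: the paper expands $h(c_1,\dots,c_{n-1},X_n)$ around $X_n=c_n$ via the one-variable Taylor formula and substitutes $X_n=c_n+qd_n$, whereas you use the full multivariable Taylor expansion and then specialize to $d_i=0$ for $i\ne n$. The arguments are equivalent, and your observation that the same reasoning yields vanishing of \emph{every} partial derivative modulo $q$ is a harmless strengthening.
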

\begin{proof}
	Write\begin{multline*}
		h(c_1,\dots,c_{n-1},X_n)=h(c_1,\dots,c_n)+\frac{\partial}{\partial x_n}h(c_1,\dots,c_n)(X_n-c_n)\\+(X_n-c_n)^2r(X)
	\end{multline*}where $r(X)\in\mathcal{O}_K[X]$.  If we set $X_n$ to be in $\mathcal{O}_K$, $d_n\equiv c_n$ mod $\wp$, then the first and last terms are multiples of $\wp^2$, hence the middle term must be as well. Therefore $\frac{\partial}{\partial x_n}h(c_1,\dots,c_n)$ must be zero modulo $\wp$.
\end{proof}
\begin{lemma}
	For $\delta>0$ sufficiently small,
	$$|\mathscr{N}_3(N,\delta)|\ll_{n,K}N^{d(n-1)}$$as $N\rightarrow+\infty$.
\end{lemma}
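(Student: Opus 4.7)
The plan is to follow Bhargava's strategy from \cite{Bh1}, combining the index-based count of Proposition 6 with the derivative refinement of Proposition 7. The starting point is that primitivity of $G_f$ together with property $\Pm_2$ (the Corollary preceding this lemma) forces $\wp^2\mid\mathfrak{D}_f$ for every $\wp\mid\mathfrak{C}_f$, so on $\mathscr{N}_3(N,\delta)$ we are in the highly divisible regime $|N_{K/\Q}\mathfrak{C}_f|>N^{d(1+\delta)}$.

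For each such $f$ and $\wp\mid\mathfrak{C}_f$ I would distinguish two local cases: (I) $f\bmod\wp$ has index at least $2$ (a triple root, two distinct double roots, or a squared irreducible factor of higher degree); (II) $f\bmod\wp$ has a unique simple double root yet $\wp^2\mid\mathfrak{D}_f$. Writing $\mathfrak{C}_f=\mathfrak{C}_f^{(1)}\mathfrak{C}_f^{(2)}$ for the corresponding factorization, Proposition 6 with $k_i=2$ handles case I: for any squarefree $\mathfrak{c}_1$, at most $\ll_{K,\varepsilon}N^{nd+\varepsilon}/|N_{K/\Q}\mathfrak{c}_1|^2$ polynomials have $\mathfrak{C}_f^{(1)}\supseteq\mathfrak{c}_1$. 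For case II I would apply Proposition 7 to $h=\mathfrak{D}_f$ regarded as a polynomial in the coefficients $(\alpha_0,\ldots,\alpha_{n-1})$: at each $\wp\mid\mathfrak{C}_f^{(2)}$, the contrapositive of Proposition 7 provides a coordinate $\alpha_i$ with $\partial\mathfrak{D}/\partial\alpha_i\not\equiv 0\bmod\wp$ at $f$ (outside the singular locus of $\{\mathfrak{D}\equiv 0\bmod\wp\}$, which is of codimension $\geq 2$ and handled separately). Hensel's lemma in the $\alpha_i$-variable then gives density $O(1/q_\wp^2)$ modulo $\wp^2$, and multiplying over primes via the Chinese remainder theorem yields, for any squarefree $\mathfrak{c}_2$, at most $\ll N^{nd}/|N_{K/\Q}\mathfrak{c}_2|^2$ polynomials with $\mathfrak{C}_f^{(2)}\supseteq\mathfrak{c}_2$.

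Combining and summing over squarefree ideals $\mathfrak{C}=\mathfrak{c}_1\mathfrak{c}_2$ with $|N_{K/\Q}\mathfrak{C}|>N^{d(1+\delta)}$:
\[
|\mathscr{N}_3(N,\delta)|\ll_{K,\varepsilon}N^{nd+\varepsilon}\sum_{|N_{K/\Q}\mathfrak{C}|>N^{d(1+\delta)}}\frac{1}{|N_{K/\Q}\mathfrak{C}|^2}\ll N^{d(n-1-\delta)+\varepsilon},
\]
which is $o(N^{d(n-1)})$ once $\varepsilon<d\delta$. The main obstacle is twofold: first, Proposition 6 as stated only covers moduli of norm less than $N^{d(1+\delta)}$, so the case-I bound must be extended to the full range by a Schmidt-type field-counting argument (cf.\ the use of \cite{Sc} in the proof of Theorem 4); second, one must show uniformly in $\wp$ that the singular locus of $\{\mathfrak{D}\equiv 0\bmod\wp\}$ contributes negligibly to the count—this is precisely where Proposition 7 enters, reducing the ostensibly nonlocal condition $\wp^2\mid\mathfrak{D}_f$ to a local derivative condition amenable to Hensel lifting over $\mathcal{O}_K/\wp$, mirroring Bhargava's Fourier-analytic argument over $\mathbb{F}_p$.
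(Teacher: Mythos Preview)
Your proposal has a genuine gap that you flag but do not close. The defining condition of $\mathscr{N}_3(N,\delta)$ is $|N_{K/\Q}\mathfrak{C}_f|>N^{d(1+\delta)}$, which lies \emph{entirely outside} the range of validity of Proposition~6; your displayed sum over $\mathfrak{C}$ therefore has no justified term, and the bound $N^{nd+\varepsilon}/|N_{K/\Q}\mathfrak{c}_1|^2$ is unavailable precisely where you invoke it. The suggested patch via Schmidt's theorem is a non sequitur: Schmidt counts \emph{fields} of bounded discriminant and gives no mechanism to recover a polynomial count with a $1/|N_{K/\Q}\mathfrak{C}|^2$ saving at large moduli. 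Your case~(II) is also essentially empty---at a tame prime a single simple double root forces $v_\wp(\mathfrak{D}_{L_f/K})=1$, contradicting $\Pm_2$---so the Hensel discussion does no work and the whole burden sits on case~(I).

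The paper's argument is structurally different and does not try to extend Proposition~6. From the fact that $f$ has index $\ge 2$ modulo every $\wp\mid\mathfrak{C}$ one gets $d_f\equiv 0\bmod\wp^2$, and this persists under $\wp$-shifts of the coefficients; Proposition~7 then yields $\partial_{\alpha_0}d_f\equiv 0\bmod\mathfrak{C}$, hence the iterated discriminant $D(\alpha_{n-1},\dots,\alpha_1):=\Res_{\alpha_0}(d_f,\partial_{\alpha_0}d_f)$ is divisible by $\mathfrak{C}$. One now \emph{fixes} $\alpha_{n-1},\dots,\alpha_1$: either $D=0$ (a hypersurface, $O(N^{d(n-2)})$ choices, hence $O(N^{d(n-1)})$ polynomials), or $D\neq 0$ is a nonzero element of bounded height, whence $\mathfrak{C}\mid D$ has $O(N^\varepsilon)$ possibilities by the divisor bound, and since $|N_{K/\Q}\mathfrak{C}|>N^d$ the congruence $d_f\equiv 0\bmod\mathfrak{C}$ pins $\alpha_0$ to $O(N^\varepsilon)$ values. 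The $N^\varepsilon$ is then stripped by separating prime factors of $\mathfrak{C}$ by size: either one can extract a subproduct $\mathfrak{B}$ with $N^{d(1+\delta/2)}\le|N_{K/\Q}\mathfrak{B}|\le N^{d(1+\delta)}$ and rerun the $\mathscr{N}_1$-type estimate (where Proposition~6 does apply), or the large-prime part has only $O_{n,d}(1)$ prime factors, each forced to divide $D$, so that part and then $\alpha_0$ are determined up to $O(1)$. The device you are missing is this elimination of $\alpha_0$ via the iterated discriminant, which converts the intractable large-modulus sieve into a divisor-counting problem for a single nonzero algebraic integer.
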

\begin{proof}
	As in Lemma 8, for every $\wp|\mathfrak{C}_f=\mathfrak{C}$, $f$ has either at least a triple root or at least a pair of double roots modulo $\wp$. Let $q$ so that $f\mod \wp\in\mathbb{F}_q[X]$. Apply Proposition 7 to $d_f\mod\wp$ for every $\wp|\mathfrak{C}$ as a polynomial in the coefficients $\alpha_{n-1},\dots,\alpha_0$ of $f$. It follows that$$\frac{\partial}{\partial \alpha_0}d_f\equiv0\mod\mathfrak{C};$$hence so is the Sylvester resultant$$\mbox{Res}_{\alpha_0}(d_f,\frac{\partial}{\partial \alpha_0}d_f)=\pm d_{d_f(\alpha_0)}.$$Let $D(\alpha_{n-1},\dots,\alpha_1):=d_{d_f(\alpha_0)}$. Note that $D$ is not identically zero, thanks to the formulae for iterated discriminants of \cite{LMc}. Moreover, by Lemma 3.1 of \cite{Bh2}, the number of $\alpha_{n-1},\dots,\alpha_1$ in $\mathcal{O}_K$ of height $\le N$ so that $D(\alpha_{n-1},\dots,\alpha_1)=0$ is $O(N^{d(n-2)})$; the number of $f$ with such $\alpha_{n-1},\dots,\alpha_1$ is thus $O(N^{d(n-1)})$.

	Fix now $\alpha_{n-1},\dots,\alpha_1$ so that $D(\alpha_{n-1},\dots,\alpha_1)\neq0$. Then $D(\alpha_{n-1},\dots,\alpha_1)\equiv0\mod\mathfrak{C}$ for at most $O_{K,\varepsilon}(N^{\varepsilon})$ ideal factors $\mathfrak{C}$ of norm $N_{K/\Q}\mathfrak{C}>N^{d}$. Once $\mathfrak{C}$ is determined by $\alpha_{n-1},\dots,\alpha_1$ up to $O(N^{\varepsilon})$ possibilities, the number of solutions for $\alpha_0\mod\mathfrak{C}$ to $d_f\equiv0\mod\mathfrak{C}$ is $(\deg_{\alpha_0}(d_f))^{\omega(\mathfrak{C})}\ll_{K,\varepsilon}N^{\varepsilon}$. This is due to the fact that the number of solutions of $\alpha_0\mod\wp$ so that $d_f\equiv0\mod\wp$ for all $\wp|\mathfrak{C}$ is $\deg_{\alpha_0}(d_f)$.

	Since $N_{K/\Q}\mathfrak{C}>N^d$ the possibilities for $\alpha_0$ of height $\le N$ are also $O_{K,\varepsilon}(N^{\varepsilon})$. So the total number of $f$ is $O_{K,\varepsilon}(N^{d(n-1)+\varepsilon})$.
	
	We are going to remove the factor $N^{\varepsilon}$. To do this, consider$$\mathfrak{A}:=\underset{N_{K/\Q}\wp>N^{d\delta/2}}{\prod_{\wp|\mathfrak{C}}}\wp.$$
	\begin{enumerate}
		\item[$\bullet$] If $N_{K/\Q}\mathfrak{A}\le N^d$, then $\mathfrak{C}$ has a factor $\mathfrak{B}$ of norm$$N^{d\left( 1+\frac{\delta}{2}\right) }\le N_{K/\Q}\mathfrak{B}\le N^{d(1+\delta)},$$with $\mathfrak{A}|\mathfrak{B}|\mathfrak{C}$. Let $\mathfrak{B}$ be such a factor of largest norm. Define$$\mathfrak{D}':=\prod_{\wp|\mathfrak{B}}\wp^{v_{\wp}(\mathfrak{D})}.$$Then $N_{K/\Q}\mathfrak{D}'>N^{d(2+\delta)}$. The same argument of Lemma 8 with $\mathfrak{B}$ in place of $\mathfrak{C}$ and $\mathfrak{D}'$ in place of $\mathfrak{D}$ gives the estimate$$\sum_{N_{K/\Q}\mathfrak{D}'>N^{d(2+\delta)}}O_{K,\varepsilon}(N^{nd+\varepsilon}\cdot N^{-2d-d\delta})\ll_{n,K}N^{d(n-1)}.$$
		\item[$\bullet$] If $N_{K/\Q}\mathfrak{A}> N^d$, we use the original argument at the beginning of the proof with $\mathfrak{A}$ in place of $\mathfrak{C}$. We have that $\mathfrak{A}$ is a divisor of $D(\alpha_{n-1},\dots,\alpha_1)$. Let $\alpha_{n-1},\dots,\alpha_1$ so that $D(\alpha_{n-1},\dots,\alpha_1)\neq0$.
		
		Now, $d_f(\alpha_0)$ is a polynomial in $\alpha_0$ of degree $\le 2n-2$; its coefficients are monomials in $\alpha_{n-1},\dots,\alpha_1$ of degree $\le 2n-2$. Therefore $D$, whose degree is $\le 4n-6$, has bounded norm$$N_{K/\Q} D\ll N^{d(2n-2)(4n-6)}.$$. The number of primes $\wp$ with $N_{K/\Q}\wp> N^{d\delta/2}$ dividing $D$ is then at most$$\ll\frac{\log(N^{d(2n-2)(4n-6)})}{N^{d\delta/2}}\ll_{n,d}1.$$Once $\mathfrak{A}$ is determined by $\alpha_{n-1},\dots,\alpha_1$, the number of solutions for $\alpha_0\mod\mathfrak{A}$ to $d_f\equiv0\mod\mathfrak{A}$ is $O_{n,K}(1)$. Since $N_{K/\Q}\mathfrak{A}>N^d$, the total number of $f$ is then $O_{n,K}(N^{d(n-1)})$.
	\end{enumerate}
\end{proof}

\section{Proof of Theorem 2}
\subsection{Counting $G$-polynomials over $K$}
\begin{lemma}
	Let $n>r$, $(n,r)=1$, $\alpha_1,\dots,\alpha_{r-1},\alpha_{r+1},\dots,\alpha_{n-1}\in\mathcal{O}_K$ be fixed. Then$$X^n+\alpha_{n-1}X^{n-1}+\dots+\alpha_1X+t\in(\mathcal{O}_K[t])[X]$$has for all but at most $O_{n,d}(1)$ $\alpha_{n-r}$ in $\mathcal{O}_K$ the full $S_n$ has Galois group of $K(t)$.
\end{lemma}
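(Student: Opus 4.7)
The plan is to analyse the Galois group $G$ of $f(X,t)=g(X)+t$ over $K(t)$, where $g(X)=X^n+\alpha_{n-1}X^{n-1}+\cdots+\alpha_1 X$, via the monodromy of the degree-$n$ cover $X\mapsto -g(X)$ from $\mathbb{A}^1_X$ to $\mathbb{A}^1_t$. First I would verify that $f$ is irreducible in $K(t)[X]$ for every value of the free coefficient: it is monic, linear in $t$, with coefficient of $t$ equal to $1$, so any factorisation in $K[X,t]$ forces a $K[X]$-factor dividing both $g(X)$ and $1$, hence a unit. Thus $G$ is a transitive subgroup of $S_n$. Next, the Newton polygon of $f$ at the place $v_\infty$ (with $v_\infty(t)=-1$) reduces to a single segment from $(0,-1)$ to $(n,0)$ of slope $1/n$, so $v_\infty$ is totally ramified of degree $n$ in the splitting field, and the inertia contributes an $n$-cycle $\sigma\in G$.

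To produce transpositions, I would compute the discriminant
\[
D(t)=\mathrm{disc}_X f=(-1)^{n(n-1)/2}\mathrm{Res}_X(g+t,g')=\pm\,n^n\prod_{i}\bigl(t+g(\beta_i)\bigr)^{m_i},
\]
where $\beta_i$ runs over the distinct roots of $g'$ with multiplicity $m_i$. At every simple root $\beta_i$ of $g'$ whose critical value $-g(\beta_i)$ differs from all the others, the monodromy about $t=-g(\beta_i)$ is a transposition $\tau_i\in G$. From the topological relation $\sigma\prod_i\tau_i=\mathrm{id}$ in $\pi_1(\mathbb{P}^1\setminus\{\text{branch points}\})$ it follows that the $\tau_i$ alone generate $G$; and since any transposition-generated subgroup of $S_n$ is a direct product of symmetric groups on the connected components of its transposition graph, the transitivity of $G$ forces that graph to be connected. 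Consequently, whenever $g'$ has $n-1$ simple roots with pairwise distinct critical values we conclude that $G=S_n$.

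Both conditions amount to a single polynomial non-vanishing in the free coefficient $c$, namely
\[
\Delta(c):=\mathrm{disc}_X(g')\cdot\prod_{i<j}\bigl(g(\beta_i)-g(\beta_j)\bigr)^{2}\neq 0,
\]
and $\Delta$ has degree bounded by an explicit function of $n$. Hence, once we know $\Delta\not\equiv 0$, the exceptional set in $\mathcal{O}_K$ has at most $O_{n,d}(1)$ elements and the lemma follows. The crucial point is therefore to show that $\Delta$ is not identically zero in $c$, and this is where the hypothesis $(n,r)=1$ enters. I would argue by degeneration: under the weighted rescaling $X=c^{1/(n-r)}Y$ and $t=c^{n/(n-r)}s$, the polynomial $c^{-n/(n-r)}f(X,t)$ tends, as $|c|\to\infty$, to the pure trinomial $Y^n+Y^r+s$, the remaining fixed coefficients contributing only terms of strictly smaller weight. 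For this trinomial the classical monodromy computation using $(n,r)=1$ produces $n-1$ distinct simple critical points with pairwise distinct critical values (the $n-r$ critical points coming from the orbit of an $(n-r)$-th root of unity give distinct critical values precisely because $\gcd(r,n-r)=1$), so its Galois group over the corresponding rational function field is $S_n$. Upper semi-continuity of Galois groups in algebraic families then forces $\Delta(c)\neq 0$ for generic large $c$, hence $\Delta\not\equiv 0$.

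The main obstacle is making this non-triviality step uniform across \emph{all} configurations of the fixed coefficients: for very degenerate choices (for example when $\alpha_1=\alpha_2=0$ forces $X=0$ to be a multiple root of $g'$ for every $c$), the sufficient condition above fails identically in $c$ and one cannot invoke it directly. In those cases one falls back on the more general picture — the multi-sheet branch point at $t=g(0)=0$ contributes a cycle whose length equals the multiplicity of $0$ as a root of $f(\cdot,0)$, and combining this cycle with the other simple-branch-point transpositions, the $n$-cycle $\sigma$ at infinity, and the coprimality $(n,r)=1$ still produces $S_n$ outside a Zariski-closed proper subset of $\mathbb{A}^{1}_{c}$. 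The final bound $O_{n,d}(1)$ comes from counting the $\mathcal{O}_K$-points of this exceptional set, which is controlled by the degree of the defining polynomials (depending only on $n$) and the degree $d=[K:\mathbb{Q}]$.
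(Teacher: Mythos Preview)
The paper itself gives no argument: it simply cites Satz~1 of Hering~\cite{He}. Your monodromy analysis of the cover $X\mapsto -g(X)$ is exactly the framework behind that result, so your plan is not a different route but rather an attempt to reconstruct the cited proof. The pieces you assemble (irreducibility from linearity in $t$, an $n$-cycle from total ramification at $t=\infty$, transpositions from simple critical points with distinct critical values, and the reduction to showing a single polynomial $\Delta(c)$ in the free coefficient is not identically zero) are all correct and standard.

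There is, however, a concrete error in your degeneration step. You assert that for the limiting trinomial $Y^n+Y^r$ the ``classical monodromy computation \dots\ produces $n-1$ distinct simple critical points with pairwise distinct critical values.'' This is true only when $r=1$. For $r>1$ the derivative is $Y^{r-1}(nY^{n-r}+r)$, so $Y=0$ is a critical point of multiplicity $r-1$; the trinomial is \emph{not} Morse, and your discriminant $\Delta$ vanishes at this limit. Consequently the degeneration does not show $\Delta\not\equiv 0$, and the semi-continuity argument as written collapses for $r>1$ --- which is precisely the general case the lemma addresses. You flag this difficulty in your final paragraph, but the fallback you describe there is too vague to count as an argument.

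The repair is to abandon the Morse condition $\Delta$ and instead apply semi-continuity directly to the Galois group. For the trinomial $Y^n+Y^r+s$ with $\gcd(n,r)=1$, the finite monodromy consists of one $r$-cycle (at $s=0$, from the $r$-fold ramification over $Y=0$) together with $n-r$ transpositions at the simple branch values $\zeta^{kr}\beta^r(n-r)/n$; these values are pairwise distinct precisely because $\gcd(r,n-r)=1$, as you correctly note. One then checks that the group generated by an $n$-cycle, an $r$-cycle, and a transposition with $\gcd(n,r)=1$ is all of $S_n$ (this is the genuine place the coprimality hypothesis is used, and it requires a short group-theoretic argument, not just counting transpositions). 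Once the trinomial is known to have Galois group $S_n$, specialization in the one-parameter family over $\mathbb{P}^1_c$ (after the base change needed to make the rescaling $X=c^{1/(n-r)}Y$ algebraic) forces the generic Galois group to be $S_n$, and hence the exceptional set of $c\in\mathcal{O}_K$ is cut out by a nonzero polynomial of degree $O_n(1)$, giving the $O_{n,d}(1)$ bound.
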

\begin{proof}
	This follows from Satz 1 of \cite{He}.
\end{proof}
\begin{lemma}
	Let $f(X)=X^n+\alpha_{n-1}X^{n-1}+\dots+\alpha_0\in\mathcal{O}_K[X]$ with roots $\beta_1,\dots,\beta_n\in K_f$ and $G_{K_f/K}=G\subseteq S_n$. Let$$\Phi(z;\alpha_0,\dots,\alpha_{n-1})=\prod_{\sigma\in S_n/G}\Big(z-\sum_{\tau\in G}\beta_{\sigma\tau(1)}\beta^2_{\sigma\tau(2)}\dots \beta^n_{\sigma\tau(n)}\Big)$$be the Galois resolvent with respect to $\sum_{\tau\in G}X_{\tau(1)}X^2_{\tau(2)}\dots X^n_{\tau(n)}$. Then $\Phi$ has integral coefficients and the roots are integral over $K$.
\end{lemma}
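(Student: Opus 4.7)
The plan is to establish two claims in sequence: first, that each root $r_\sigma := \sum_{\tau\in G}\beta_{\sigma\tau(1)}\beta^2_{\sigma\tau(2)}\cdots\beta^n_{\sigma\tau(n)}$ of $\Phi$, indexed by $\sigma\in S_n/G$, is integral over $\mathcal{O}_K$; second, that the coefficients of $\Phi(z)$ lie in $\mathcal{O}_K$. The first claim is immediate: each $\beta_i$ is a root of the monic polynomial $f\in\mathcal{O}_K[X]$, hence integral over $\mathcal{O}_K$, and $r_\sigma$ is a polynomial expression in the $\beta_i$'s with coefficients in $\Z$, so it lies in the integral closure of $\mathcal{O}_K$ inside $K_f$.

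The second claim will follow once I show that the Galois group $H=G_{K_f/K}$ permutes the set $\{r_\sigma\}$. First I would verify that $r_\sigma$ is well defined as a function of the coset $\sigma G$: substituting $\tau'=\tau_0\tau$ in the defining sum shows that replacing $\sigma$ by $\sigma\tau_0$ (for any $\tau_0\in G$) leaves the sum unchanged. Next, identify $H$ with $G\subseteq S_n$ via the action on $\beta_1,\ldots,\beta_n$, so that each $h\in H$ corresponds to $g\in G$ with $h(\beta_i)=\beta_{g(i)}$. A direct computation then yields
\[
h(r_\sigma)=\sum_{\tau\in G}\beta_{(g\sigma)\tau(1)}\beta^2_{(g\sigma)\tau(2)}\cdots\beta^n_{(g\sigma)\tau(n)}=r_{g\sigma},
\]
and since left multiplication by $g$ permutes the left cosets of $G$ in $S_n$, the Galois group $H$ permutes $\{r_\sigma:\sigma\in S_n/G\}$.

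The coefficients of $\Phi(z)$, being the elementary symmetric polynomials in the $r_\sigma$'s, are therefore integral over $\mathcal{O}_K$ by the first claim and invariant under $H$ by the permutation action just established, so they lie in $K$. Since $\mathcal{O}_K$ is integrally closed in $K$, they must in fact lie in $\mathcal{O}_K$, which proves the second claim. The only point requiring care is the compatibility check between the Galois action on $K_f$ and the left-multiplication action of $G$ on $S_n/G$; once that bookkeeping is carried out, the rest reduces to the standard fact that an element of $K$ which is integral over $\mathcal{O}_K$ already lies in $\mathcal{O}_K$.
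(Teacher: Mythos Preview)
Your proof is correct. For the integrality of the roots, your argument is essentially the same as the paper's (and arguably cleaner). For the coefficients, however, you take a genuinely different route: you use only that the Galois group $G\cong G_{K_f/K}$ permutes the $r_\sigma$, conclude the coefficients lie in $K$, and then invoke integral closedness of $\mathcal{O}_K$ to land in $\mathcal{O}_K$. The paper instead observes that permuting the $\beta_i$ by an \emph{arbitrary} $\pi\in S_n$ sends $r_\sigma$ to $r_{\pi\sigma}$, so $\Phi$ is invariant under the full symmetric group; by the fundamental theorem of symmetric polynomials the coefficients of $\Phi$ are then integer polynomials in the elementary symmetric functions of the $\beta_i$, i.e.\ they lie in $\Z[\alpha_0,\dots,\alpha_{n-1}]$. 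This is strictly stronger than your conclusion, and it is precisely what the proof of Theorem~2 needs, where $\Phi(z;\alpha_0,\dots,\alpha_{n-1})$ is treated as a polynomial in $z$ and in the \emph{free variables} $\alpha_0,\dots,\alpha_{n-1}$ (in particular as a bivariate polynomial in $z$ and $\alpha_0$ with the other $\alpha_i$ specialized). Your Galois-theoretic argument, tied to the specific splitting field of $f$, only shows the coefficients are in $\mathcal{O}_K$ for each fixed numerical choice of the $\alpha_i$; it does not recover the universal polynomial dependence on the $\alpha_i$ that the downstream application relies on.
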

\begin{proof}
	The polynomial $\Phi$ is fixed by any permutation of the roots. Then the coefficients are symmetric polynomials in the roots of $f$, hence they can be written as integral polynomials in the elementary symmetric polynomials of the roots of $f$, that is in the coefficients of $f$.\\
	The root $\sum_{\tau\in G}\beta_{\sigma\tau(1)}\beta^2_{\sigma\tau(2)}\dots \beta^n_{\sigma\tau(n)}$ of $\Phi$ is fixed by any element of $G$, so it is in $K$. It also satisfy a monic polynomial with coefficients in $\mathcal{O}_K$. Then it is integral over $K$.
\end{proof}
\begin{lemma}
	Let $F\in\mathcal{O}_K[X_1,X_2]$ of degree $n$ be irreducible over $K$. For $P_i\in\R_{\ge1}$, $i=1,2$, let$$N(F;P_1,P_2)=|\{(x_1,x_2)\in\mathcal{O}_K^2:F(x_1,x_2)=0,\ \h(x_i)\le N\ i=1,2\}|.$$Denote by$$T=\max_{(e_1,e_2)}\{P_1^{de_1},P_2^{de_2}\},$$where the maximum takes over all integer 2-uples $(e_1,e_2)$ for which the corresponding monomial $X_1^{e_1}X_2^{e_2}$ occurs in $F(X_1,X_2)$ with nonzero coefficient. Then for every $\varepsilon>0$ $$N(F;P_1,P_2)\ll_{n,d,\varepsilon}\max\{P_1,P_2\}^{\varepsilon}\cdot\exp\left(\frac{d^2\log P_1\log P_2}{\log T} \right).$$
\end{lemma}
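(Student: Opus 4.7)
The plan is to pull the counting problem over $\mathcal{O}_K$ back to a counting problem in $\mathbb{Z}^{2d}$ via the Minkowski embedding, and then apply a multivariate version of Heath-Brown's determinant method for curves (\cite{HB}, \cite{BHB}), in a block form matching the structure of the basis $\omega_1,\dots,\omega_d$.

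First I would write each $x_j\in\mathcal{O}_K$ as $x_j=\sum_{k=1}^d a_k^{(j)}\omega_k$ with $a_k^{(j)}\in\mathbb{Z}$, $|a_k^{(j)}|\le P_j$, and form the norm polynomial
\[
\mathcal{F}(Y,Z):=\prod_{i=1}^{d}\sigma_i(F)\!\left(\sum_{k}\sigma_i(\omega_k)Y_k,\;\sum_{k}\sigma_i(\omega_k)Z_k\right)\in\mathbb{Z}[Y_1,\dots,Y_d,Z_1,\dots,Z_d],
\]
using the $d$ embeddings $\sigma_1,\dots,\sigma_d\colon K\hookrightarrow\mathbb{C}$. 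Since the $\sigma_i$ are injective, $F(x_1,x_2)=0$ is equivalent to $\mathcal{F}((a_k^{(j)}))=0$, so
\[
N(F;P_1,P_2)=\#\bigl\{(a_k^{(j)})\in\mathbb{Z}^{2d}:|a_k^{(1)}|\le P_1,\;|a_k^{(2)}|\le P_2,\;\mathcal{F}(a)=0\bigr\}.
\]
Irreducibility of $F$ over $K$ ensures $\mathcal{F}\not\equiv 0$, and the variety $\mathcal{F}=0$ contains, as an integral component, the Weil restriction $\mathrm{Res}_{K/\mathbb{Q}}(\{F=0\})$, which has $\mathbb{Q}$-dimension $d$, matching the block structure of the ambient box.

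The key structural observation is then the shape of the Newton polytope of $\mathcal{F}$. A monomial $X_1^{e_1}X_2^{e_2}$ appearing in $F$ yields, after the substitution and after multiplying over the $d$ conjugate factors, monomials $Y^{\vec\alpha}Z^{\vec\beta}$ in $\mathcal{F}$ with $|\vec\alpha|=d e_1$ and $|\vec\beta|=d e_2$. Thus the maximum monomial weight across the box $|Y_k|\le P_1,\;|Z_k|\le P_2$ equals $\max_{(e_1,e_2)\in\operatorname{supp}(F)}\{P_1^{d e_1},P_2^{d e_2\}}=T$. Applying the determinant method of \cite{HB}, \cite{BHB} in block form (equivalently, specialising $\mathcal{F}$ along a generic two-plane chosen so that irreducibility is preserved by Bertini and the Newton polytope collapses to $d\cdot\mathrm{Newt}(F)$) reduces to the classical two-variable Heath-Brown bound with effective box sides $P_1^d$ and $P_2^d$. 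This gives
\[
N(F;P_1,P_2)\ll_{n,d,\varepsilon}\max\{P_1,P_2\}^{\varepsilon'}\exp\!\left(\frac{\log P_1^d\cdot\log P_2^d}{\log T}\right)=\max\{P_1,P_2\}^{\varepsilon}\exp\!\left(\frac{d^2\log P_1\log P_2}{\log T}\right)
\]
after absorbing the $d\varepsilon'$ into $\varepsilon$.

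The main obstacle is the passage from the classical two-variable estimate to this multivariate block setting while preserving the same monomial-weighted savings. Either one must verify that a generic two-dimensional specialisation of $\mathcal{F}$ remains irreducible and retains the same $T$ up to constants (a Bertini-type argument together with an inspection of the dominant coefficients $N_{K/\mathbb{Q}}(\omega_k)^{e_1+e_2}$), or one must adapt the $p$-adic determinant construction directly to $\mathcal{O}_K$, replacing the $p$-adic estimates at rational primes by $\wp$-adic ones at primes of $\mathcal{O}_K$ and tracking the resulting logarithms of ideal norms. In either route, the careful bookkeeping of monomial weights through the $d$ conjugates is what produces the factor $d^2$ in the exponent, as opposed to merely the $d$ one would obtain from a naive slice-by-slice application of \cite{HB}.
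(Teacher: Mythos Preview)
The paper does not really prove this lemma: it says the bound is a ``straightforward generalization'' of the case $P_1=1$ of \cite{BHB} Theorem~1 (cf.\ \cite{HB} Theorem~15), and adds one substantive remark---if $F$ is irreducible over $K$ but not absolutely irreducible, then by B\'ezout the curve has only $O_{n,d}(1)$ points rational over $K$, so one may assume $F$ is absolutely irreducible, which is the hypothesis the determinant method actually needs. Your second route, adapting the $p$-adic determinant construction directly to $\mathcal{O}_K$ by using prime ideals $\wp$ and tracking norms, is exactly what that phrase ``straightforward generalization'' means. In that setup each monomial $x_1^{e_1}x_2^{e_2}$ has norm bounded by $(P_1^d)^{e_1}(P_2^d)^{e_2}$, so the archimedean side of the determinant estimate runs with effective box lengths $P_1^d,P_2^d$ while the non-archimedean side runs over primes of $\mathcal{O}_K$; plugging $Q_i=P_i^d$ into the classical exponent $\log Q_1\log Q_2/\log T$ with $T=\max Q_i^{e_i}$ produces precisely $d^2\log P_1\log P_2/\log T$.

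Your primary route, however, has a genuine gap. Passing to the norm form $\mathcal{F}$ and then ``specialising along a generic two-plane'' does not control $N(F;P_1,P_2)$: the $\mathcal{O}_K$-points correspond to lattice points spread throughout the full box in $\mathbb{Z}^{2d}$, and there is no reason they should lie on, or even near, any single $2$-plane. A Bertini slice of $\{\mathcal{F}=0\}$ gives a curve of the right degree and Newton polytope, but it carries essentially none of the integral points you want to count, so the reduction to the two-variable Heath--Brown bound fails at that step. (Separately, the Weil restriction of a curve over $K$ is geometrically a product of $d$ conjugate curves, hence geometrically reducible for $d>1$, which already signals that the two-variable curve machinery cannot be applied to $\mathcal{F}$ as a black box.) You should also make the B\'ezout reduction to absolute irreducibility explicit: ``$\mathcal{F}\not\equiv 0$'' is far weaker than what the determinant method requires, and over $K$ this reduction is the one nontrivial point the paper's proof actually supplies.
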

\begin{proof}
	It is a straightforward generalization of the special case $P_1=1$ of Theorem 1 in \cite{BHB}. See also \cite{HB}, Theorem 15. As noticed in \cite{Di2}, if $F$ is irreducible over $K$, by B\'{e}zout's Theorem $N(F;P_1,P_2)\ll_{n,d}1$, so we may assume that $F$ is absolutely irreducible, as in \cite{BHB}.
\end{proof}
We can now prove Theorem 2. Let $G$ be a subgroup of $S_n$ of index $[S_n:G]=m$. By Lemma 12, there exist $b_1,\dots,b_m\in\Z[\alpha_0,\dots,\alpha_{n-1}]$ so that$$\Phi(z;\alpha_0,\dots,\alpha_{n-1})=z^m+b_1(\alpha_0,\dots,\alpha_{n-1})z^{m-1}+\dots+b_m(\alpha_0,\dots,\alpha_{n-1}).$$ By Lemma 1, a root $z\in\mathcal{O}_K$ of $\Phi$ has norm bounded by$$|N_{K/\Q}|\ll_{n,d}N^{d\alpha}$$for some $\alpha\ge1$.

Now fix $\alpha_{n-1},\dots,\alpha_2$ of height $N$. Our goal is to bound the number of $\alpha_1,\alpha_0\in\mathcal{O}_K$ of height $N$ so that $G_{K_f/K}=G$. It suffices to show that there are at most $O(N^{d(1+1/m)+\varepsilon})$ such $\alpha_1,\alpha_0$.

By Lemma 11, $X^n+\alpha_{n-1}X^{n-1}+\dots+\alpha_1X+t$ has for all but at most $O_{n,d}(1)$ values of $\alpha_1$ the full symmetric group as Galois group over $K(t)$. Hence it's enough to fix any such $\alpha_1$ of height $N$ for which $X^n+\alpha_{n-1}X^{n-1}+\dots+\alpha_1X+t$ has Galois group $S_n$ over $K(t)$ and then show that for those fixed $\alpha_{n-1},\dots,\alpha_1$ there are at most $O(N^{d/m+\varepsilon})$ possibilities for $\alpha_0$, $\h(\alpha_0)\le N$, for which $f$ has Galois group $G$.

Consider $\Phi(z;\alpha_0,\dots,\alpha_{n-1})=\Phi(z,\alpha_0)$ as a polynomial in $z,\alpha_0$. Since $X^n+\alpha_{n-1}X^{n-1}+\dots+\alpha_1X+t$ has Galois group $S_n$, the resolvent $\Phi(z,\alpha_0)$ must be irreducible over $K[z]$. We can now bound above the number of zeros of $\Phi(z,\alpha_0)$ with $|N_{K/\Q}(z)|\ll N^{d\alpha}$ and $\h(\alpha_0)\le N$ by applying Lemma 13 with $P_1\asymp N^{\alpha}$ and $P_2=N$. In this case $T\gg N^{dm\alpha}$, so\begin{multline*}
	|\{(z,\alpha_0)\in\mathcal{O}_K^2: |N_{K/\Q}(z)|\ll N^{d\alpha},\ \h(\alpha_0)\le N,\ \Phi(z,\alpha_0)=0\}|\\
	\ll_{n,d,\varepsilon}N^{\varepsilon}\cdot\exp\left( \frac{d^2\log N^{\alpha}\log N}{dm\alpha\log N}\right)\\
	\ll_{n,d,\varepsilon}N^{\frac{d}{m}+\varepsilon}. 
\end{multline*}This completes the proof of Theorem 2.

\pagebreak

\noindent
\footnotesize
DEPARTEMENT MATHEMATIK, ETH EIDGEN\"{O}SSICHE TECHNISCHE HOCHSCHULE, 8092 Z\"{U}RICH, SWITZERLAND\\
\textit{Email address}: \texttt{ilaria.viglino@yahoo.it}

\end{document}